\theoremstyle{plain}
\newtheorem{thm}{Theorem}[section]
\newtheorem{cor}[thm]{Corollary}
\newtheorem{lem}[thm]{Lemma} 
\newtheorem{prop}[thm]{Proposition}
\theoremstyle{definition}
\newtheorem{defi}[thm]{Definition}
\newtheorem{defit}[thm]{Theorem-Definition}
\theoremstyle{remark}
\newtheorem{rem}[thm]{Remark}
\newtheorem{ex}[thm]{Example}
\newcommand{\lgw}{\longrightarrow}
\newcommand{\lgm}{\longmapsto}
\newcommand{\ovl}{\overline}
\newcommand{\ord}{\text{ord}}
\newcommand{\Int}{\operatorname{Int}}
\newcommand{\x}{\mathbf{x}}
\newcommand{\wdt}{\widetilde}
\newcommand{\Supp}{\text{Supp}}
\newcommand{\D}{\Delta}
\newcommand{\G}{\Gamma}
\newcommand{\la}{\lambda}
\renewcommand{\L}{\mathbb{L}}
\newcommand{\Z}{\mathbb{Z}}
\newcommand{\GG}{G(p)}
\renewcommand{\k}{\Bbbk}
\newcommand{\R}{\mathbb{R}}
\newcommand{\K}{\mathbb{K}}
\renewcommand{\SS}{\mathcal{S}}
\newcommand{\N}{\mathbb{N}}
\newcommand{\C}{\mathbb{C}}
\newcommand{\Q}{\mathbb{Q}}
\renewcommand{\t}{\tau}
\newcommand{\p}{\textsl{p}}
\newcommand{\Conv}{\operatorname{Conv}}
\renewcommand{\lg}{\langle}
\newcommand{\s}{\sigma}
\newcommand{\rg}{\rangle}
\renewcommand{\a}{\alpha}
\newcommand{\g}{\gamma}
\renewcommand{\phi}{\varphi}
\newcommand{\ZR}{\operatorname{ZR}}
\newcommand{\Ord}{\operatorname{Ord}}
\renewcommand{\o}{\omega}
\newcommand{\NP}{\operatorname{NP}}
\begin{document}
\baselineskip=14pt

\title{The minimal cone of an algebraic Laurent series}
\author{Fuensanta Aroca}
\email{fuen@im.unam.mx}
\address{Instituto de Matem\'aticas, Universidad Nacional Aut\'onoma de M\'exico (UNAM), Mexico}

\author{Julie Decaup}
\email{julie.decaup@im.unam.mx}
\address{Instituto de Matem\'aticas, Universidad Nacional Aut\'onoma de M\'exico (UNAM), Mexico}

\author{Guillaume Rond}
\email{guillaume.rond@univ-amu.fr}
\address{Universit\'e Publique, CNRS, Centrale Marseille, 
I2M, UMR 7373, 13453 Marseille,
France}

\subjclass[2010]{05E40, 06A05,  11J81, 12J99, 13F25, 14B05, 32B10}

\keywords{power series rings, support of a Laurent series, algebraic closure, orders on a lattice,  Henselian valued fields}
\thanks{This work has been partially supported by ECOS Project M14M03, and by PAPIIT  IN108216 and IN108320. The third author is deeply grateful to the UMI LASOL of  CNRS where this project has been carried out.}

\begin{abstract}
We study the algebraic closure of $\K(\!(x)\!)$, the field of power series in several indeterminates over a field $\K$. In characteristic zero we show that the elements algebraic over $\K(\!(x)\!)$ can be expressed as  Puiseux series such that the convex hull of its support is essentially a polyhedral rational cone, strengthening the known results.  
In positive characteristic we construct algebraic closed fields containing the field of power series and we give examples showing that the results proved in characteristic zero are longer valid in positive characteristic.
\end{abstract}

\maketitle



\section{Introduction}
When $\K$ is a field   and $x=(x_1,\ldots, x_n)$ is a vector of $n$ indeterminates, we denote by $\K(\!(x)\!)$ the field of formal power series in $n$ indeterminates. The problem we are studying here concerns the determination of an algebraic closure of $\K(\!(x)\!)$ when $\K$ is an algebraically closed field of any characteristic. \\
Let us begin with the characteristic zero case. When $n=1$, the Newton-Puiseux Theorem asserts that the elements that are algebraic over $\K(\!(x)\!)$ are the Puiseux series, i.e. the formal sums of the form $\sum_{k=k_0}^\infty a_kx^{k/q}$ for some positive integer $q$ (cf. \cite{Pu1} and \cite{Pu2}).\\
When $n\geq 2$ there is no known description of the algebraic closure of $\K(\!(x)\!)$. The Abhyankar-Jung Theorem asserts that the roots of a monic polynomial with coefficients in $\K[[ x]]$ whose discriminant is a monomial times a unit are Puiseux series (cf. \cite{J}, \cite{Ab0}, \cite{KV} or \cite{PR}).  But, in general, polynomials with coefficients in $\K[[ x]]$ may not have Puiseux series as roots, as the polynomial $T^2-(x_1+x_2)$. Nevertheless, a result of MacDonald asserts that we may express the elements algebraic over $\K(\!(x)\!)$ as Laurent Puiseux series \cite{Mc}. In order to explain this result let us introduce some terminology. \\
A \emph{(generalized) series} $\xi$ (with support in $\Q^n$ and coefficients in a field $\K$) is a formal sum $\xi=\sum_{\a\in\Q^n}\xi_\a x^\a$, where $x^\a:=x_1^{\a_1}\cdots x_n^{\a_n}$, and the $\xi_\a\in\K$. Its \emph{support}  is the set
$$\Supp(\xi):=\{\a\in\Q^n\mid \xi_\a\neq 0\}.$$
Such a series is called a \emph{Laurent series}  (resp. \emph{Laurent Puiseux series}) if $\Supp(\xi)\subset \Z^n$ (resp. $\Supp(\xi)\subset \frac{1}{k}\Z^n$ for some $k\in\N^*$). \\
The set of generalized series is a commutative group as we can define the sum of two power series in the usual way. But in general the product of two such series is not well defined. To insure the existence of the product of two generalized series, one has to impose that their support is well-ordered for a total order on $\Q^n$ (see \cite{Ri} for example). This is the case for example when we consider Laurent  series whose supports are included in the translation of a given common strongly convex cone (for example see \cite{AK} or \cite[Lemma 3.8]{AR}). Here, a \emph{strongly convex cone} is a cone that does not contain non-trivial linear subspaces. In particular, for a series $\xi$ whose support is included in a strongly convex cone containing ${\R_{\geq0}}^n$, and for $P(x,T)\in\K[[x]][T]$, $P(x,\xi)$ is well defined.\\
We also recall that a \emph{rational cone} is a finitely generated submonoid of $\R^n$ that is generated by vectors with integer coordinates. Then, MacDonald's Theorem (cf \cite[Theorem 3.6]{Mc} - see also \cite{A-I}) asserts that the elements  that are algebraic over $\K(\!(x)\!)$ can be expressed as Puiseux series with support in the translation of a strongly convex rational cone $\s$. Moreover MacDonald showed that, for any given $\o\in{\R_{>0}}^n$ whose coordinates are $\Q$-linearly independent,  $\s$ can be chosen in such a way  that 
\begin{equation}\label{eq_completion}\forall s\in\s\setminus\{\underline 0\},\  \ s\cdot\o>0.\end{equation}
Let us remark that, for $q\in\N^*$, a Laurent series $\xi(x_1,\ldots, x_n)$ is algebraic over $\K(\!(x)\!)$ if and only if $\xi(x_1^{1/q},\ldots, x_n^{1/q})$ is algebraic over $\K(\!(x)\!)$. Therefore, in order to determine an algebraic closure of $\K(\!(x)\!)$ one only needs to determine which are the Laurent series $\xi$ whose support is included in the translation of a rational strongly convex cone $\s$ that are algebraic over $\K(\!(x)\!)$. And by the result of MacDonald, if we fix  $\o\in{\R_{>0}}^n$ whose coordinates are $\Q$-linearly independent, we may even assume that $\s$ satisfies \eqref{eq_completion}. \\
For such a $\o$ we define the monomial valuation $\nu_\o$ in the following way: for $f=\sum_{\a\in\N^n}f_\a x^\a$, we set $\nu_\o(f):=\min\{\a\cdot\o\mid f_\a\neq 0\}$. This valuation defines a norm $\|\cdot\|_\o$ on $\K(\!(x)\!)$ by
$$\|f/g\|_\o:=e^{-\nu_\o(f)+\nu_\o(g)}.$$
We denote by $\L^\o$ the completion of $\K(\!(x)\!)$ with respect to $\|\cdot\|_\o$. Then, we remark that a Laurent series whose support is included in the translation of a  cone $\s$ satisfying \eqref{eq_completion}, is necessarily in $\L^\o$. Therefore  in order to determine an algebraic closure of $\K(\!(x)\!)$ one only needs to determine  the algebraic closure of $\K(\!(x)\!)$ in $\L^\o$, its completion for the norm $\|\cdot\|_\o$. 
Passing through the completion of a field $\k$ in order to understand its algebraic closure is a classical process that appears at least in two important situations:

\begin{enumerate}
\item When we want to understand the algebraic closure of $\Q$, we equip $\Q$ with the usual absolute value, and study the algebraic elements of $\R$, its completion, over $\Q$. Indeed the field extension of $\R$ into its algebraic closure $\R\lgw \C$ is the most simple one.

\item When we want to understand the algebraic closure of $\C(x_1)$, the field of rational functions in one variable,
we equip $\C(x_1)$ with the norm $\|\cdot\|$ defined by 
$$\forall p, q\in \C[x_1],\ \ \|p/q\|:=e^{-\ord(p)+\ord(q)}$$
and we study the algebraic closure of $\C(x_1)$ into its completion $\C(\!(x_1)\!)$. Indeed, by the Newton-Puiseux Theorem, the field extension of $\C(\!(x_1)\!)$ into its algebraic closure, the field of Puiseux series, is well described.
\end{enumerate}
It is fascinating that there are similar results between these situations in spite of the fact that the technics used to prove them are quite different. For instance, there is an analogue of the Liouville diophantine approximation Theorem for the elements of $\L^\o$ that are algebraic over $\K(\!(x)\!)$ (see \cite{Ro}, \cite{I-I}, \cite{Hi}). There is also an analogue of  Eisenstein's Theorem \cite{Ei} for the elements of $\L^\o$ that are algebraic over $\K(\!(x)\!)$ (see \cite[Theorem 5.12]{Ro2}) and an analogue of Fabry's Theorem for the elements of $\L^\o$ that are algebraic over $\K(\!(x)\!)$ (see \cite[Theorem 6.4]{AR}).\\
\\
In this paper we investigate necessary conditions for a Laurent series with support in a rational strongly convex cone to be algebraic over $\K(\!(x)\!)$ in any characteristic. We provide conditions in terms of the support of the series.  Indeed in the case of the study of the algebraic closure of $\C(x_1)$ into $\C(\!(x_1)\!)$, or the algebraic closure of $\K(x_1)$ into $\K(\!(x_1)\!)$ for a general field $\K$, such conditions have been given, and some questions remain open (as the Dynamical Mordell-Lang Conjecture -  cf. \cite{BHS} or \cite{BGT}). \\
In order to explain this we introduce the following definition:
 
\begin{defi}
Let  $\xi$  be a series with support in $\Q^n$ and coefficients in a field $\K$. We set
	$$\t(\xi):= \left\{ \omega\in {\R_{\geq 0}}^n\mid  \exists k \in\R,\ \Supp (\xi )\cap \left\{  u\in\R^n\mid u\cdot\omega\leq k\right\}=\emptyset\right\}.$$
\end{defi}
For example, if $\Supp(\xi)$ is equal to a cone $\s$ and every unbounded face of $\s$ contains infinitely many elements of $\Supp(\xi)$, then $\t(\xi)^\vee=\s$ (see Definition \ref{cone_def} for the dual of a cone). Let us mention that we restrict to vectors $\o\in{\R_{\geq 0}}$ since, for a series $\xi$ algebraic over $\K(\!(x)\!)$, $\xi+f(x)$ is algebraic over $\K(\!(x)\!)$ for any $f(x)\in\K[[x]]$.
\begin{rem}\label{convex_cone}
It is straightforward to check that $\t(\xi)$ is a (non necessarily polyhedral) convex cone (see Lemma \ref{convex}).
\end{rem}
Our first main result is that $\t(\xi)$ is rational when $\xi$ is algebraic over $\K(\!(x)\!)$:

\begin{thm}\label{main_thm}
Let $\xi$ be a Laurent Puiseux series whose support is included in a translation of a strongly convex cone containing ${\R_{\geq 0}}^n$ and with coefficients in a characteristic zero field $\K$. Assume that $\xi$ is algebraic over $\K(\!(x)\!)$.  Then the set $\tau(\xi)$ is  a strongly convex rational cone.

\end{thm}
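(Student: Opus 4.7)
The plan is to reformulate $\t(\xi)$ as a dual cone and then use the algebraicity of $\xi$, together with the compactness of the space of preorders on $\Z^n$, to force polyhedrality and rationality.

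First, $\omega\in\t(\xi)$ if and only if the linear form $u\mapsto u\cdot\omega$ is bounded below on $\Supp(\xi)$, which is equivalent to $\omega$ being nonnegative on the recession cone $\rho$ of $\Conv(\Supp(\xi))$; hence $\t(\xi)=\rho^{\vee}\cap{\R_{\geq 0}}^n$. This shows convexity, and strong convexity is automatic since $\t(\xi)\subseteq{\R_{\geq 0}}^n$ and the latter contains no nontrivial linear subspace. The problem therefore reduces to showing that $\rho$ is rational polyhedral, because the intersection of the dual of a rational polyhedral cone with ${\R_{\geq 0}}^n$ is again rational polyhedral.

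To produce the extremal rays of $\rho$, I would exploit the minimal polynomial $P(x,T)\in\K[[x]][T]$ of $\xi$. By McDonald's theorem one has $\Supp(\xi)\subseteq\sigma+c$ for some strongly convex rational cone $\sigma$, whence $\rho\subseteq\sigma$. For each weight $\omega\in\Int(\sigma^{\vee})$ one can form an initial polynomial $\In_\omega(P)$ out of the $\omega$-minimal terms of the coefficients of $P$, and the algebraicity of $\xi$ propagates, in the spirit of a Newton--Puiseux analysis, to an algebraic relation satisfied by the initial form $\In_\omega(\xi)$. This data cuts out a fan structure on $\sigma^{\vee}$ that is locally rational polyhedral. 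The aim is to show that every extremal ray of $\rho$ arises as the direction orthogonal to a wall of this fan, so that polyhedrality and rationality of $\rho$ follow from those of the fan.

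The main obstacle, I expect, lies in passing from this local picture to the global finiteness of the walls: a priori the subdivision induced by $P$ could have infinitely many cells, and a boundary weight $\omega$ need not attain its minimum on $\Supp(\xi)$, so the associated initial form must be handled with care. This is exactly where the compactness of the space of preorders on $\Z^n$ should enter. Namely, if $\rho$ were not rational polyhedral, one could encode an infinite family of distinct supporting hyperplanes of $\Conv(\Supp\xi)$ by a sequence of preorders, extract by compactness a limit preorder, and use the algebraic relation $P(x,\xi)=0$ to contradict the finiteness of the initial polynomials of $P$ around the limit weight. This would simultaneously yield finitely many extremal rays of $\rho$ and their rationality, completing the proof.
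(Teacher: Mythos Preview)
Your very first reduction contains a genuine error. You claim that ``$\omega$ bounded below on $\Supp(\xi)$'' is equivalent to ``$\omega$ nonnegative on the recession cone $\rho$ of $\Conv(\Supp(\xi))$'', but only the forward implication holds in general: for a closed convex set that is not a polyhedron, a linear form can be nonnegative on the recession cone and still unbounded below on the set (think of the epigraph of a parabola, or Example~\ref{ex1} in the paper, where $\rho^\vee\cap{\R_{\geq 0}}^2$ is the closed cone $\{\o_1\geq\o_2\geq 0\}$ while $\t(\xi)$ misses the ray $\o_1=\o_2$ and is not even closed). So you only have $\t(\xi)\subset\rho^\vee\cap{\R_{\geq 0}}^n$, and proving that $\rho$ is rational polyhedral does not by itself yield the theorem. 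The equality $\t(\xi)=\rho^\vee\cap{\R_{\geq 0}}^n$ does hold for algebraic $\xi$, but only \emph{a posteriori}, as a consequence of Theorems~\ref{main_thm} and~\ref{main_thm2}; using it as a starting point is circular.

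Beyond this, the remainder of the sketch is too vague at the decisive step. The fan of initial forms of $P$ (or of its discriminant) is indeed finite and rational, but you never supply the mechanism linking that fan to $\t(\xi)$. The paper's argument provides exactly this missing link: for every order $\preceq\in\Ord_n$, the $d$ roots of $P$ in the algebraically closed field $\mathcal S^\K_\preceq$ all have support in a translate of a single $\preceq$-positive rational cone $\s_\preceq$; compactness of $\Ord_n$ extracts finitely many such cones $\s_1,\ldots,\s_N$ whose duals cover ${\R_{\geq 0}}^n$. The key observation (Lemma~\ref{key_lem}) is then a dichotomy: if $\Int(\s_k^\vee)$ meets $\t(\xi)$, one shows $\xi$ must coincide with one of the $d$ roots supported in $\g_k+\s_k$, forcing $\s_k^\vee\subset\t(\xi)$ wholesale. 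This all-or-nothing behaviour, together with convexity of $\t(\xi)$, gives $\t(\xi)=\bigcup_{k\leq l}\s_k^\vee$ directly. Your proposed contradiction via a limiting preorder gestures at compactness but never isolates this dichotomy, which is where the algebraicity is actually used.
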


From the rationality of $\t(\xi)$ we can deduce easily the following result:

\begin{cor}\label{cor_main}
Let $\xi$ be a Laurent Puiseux series whose support is included in a translation of a strongly convex cone containing ${\R_{\geq 0}}^n$ and with coefficients in a characteristic zero field $\K$. Assume that $\xi$ is algebraic over $\K(\!(x)\!)$.  Then there is $\g\in\Z^n$ such that
$$\Supp(\xi)\subset \g+\t(\xi)^\vee.$$
Moreover $\t(\xi)^\vee$ is the smallest (non necessarily polyhedral) cone having this property.
\end{cor}

Now the question is to determine how far is the support of $\xi$ of being equal to a set of the form $\g+\t(\xi)^\vee$. The following result provides an answer to this question:
 \begin{thm}\label{main_thm3}
 Let $\xi$ be a Laurent Puiseux series whose support is included in a translation of a strongly convex cone containing ${\R_{\geq 0}}^n$ and with coefficients in a characteristic zero field $\K$.  Assume that $\xi$ is algebraic over $\K(\!(x)\!)$.  Then there exist a finite set $C\subset\Z^n$,  a Laurent polynomial $p(x)$, and a power series $f(x)\in\K[[x]]$  such that
 $$\Supp(\xi+p(x)+ f(x))\subset C+\t(\xi)^\vee$$
and for every unbounded facet $F$ of 
$\Conv(C+\t(\xi)^\vee)$,
we have
$$\#\left\{\Supp(\xi+p(x)+ f(x))\cap F\right\}=+\infty.$$
 \end{thm}

We will see in Example \ref{ex_C} that, in general,  the set $C$  cannot be chosen to be one single point. We will also see in Example \ref{ex_min} that there is no minimal, maximal or canonical $C$ satisfying Theorem \ref{main_thm3}.\\
We do not know if this statement can be extended to faces of $\t(\xi)^\vee$ of smaller dimension. But we have the following result:
\begin{thm}\label{main_thm2}
Let $\xi$ be a Laurent Puiseux series whose support is included in a translation of a strongly convex cone containing ${\R_{\geq 0}}^n$ and with coefficients in a  field $\K$ of  characteristic zero. Assume that $\xi$ is algebraic over $\K(\!(x)\!)$. Then, for every  $u\in\R_{>0}^n$ in the boundary of $\t(\xi)$  there exists a Laurent polynomial $p(x)$ such that, if $F_u$ denotes the face defined by $u$ of the convex hull of
$\Supp(\xi+p(x))$, then 
$$\#\left(F_u\cap\Supp(\xi)\right)=+\infty.$$
\end{thm}

Let us mention that the cone $\t(\xi)$ was already considered in \cite{AR} where we were not able to prove its rationality and where we gave a very much weaker version of Theorem \ref{main_thm2}.\\
\\
We will begin by  the proof of Theorem \ref{main_thm}. This proof is not very difficult once we have
 the right setting, and is essentially based on two tools: the compacity of the space of orders on ${\R_{\geq 0}}^n$, and the construction, for every  order $\preceq$ on $\Q^n$, of an algebraically closed field $\mathcal S^\K_\preceq$ containing $\K(\!(x)\!)$. This  result of compacity is  due to Ewald and Ishida \cite{EI} (see also  \cite{T}) and is a purely topological result.  It will allow us to have a decomposition  of ${\R_{\geq0}}^n$ into a union of finitely many rational strongly convex cones having the following property:  for each order $\preceq$, the roots of the minimal polynomial of $\xi$ in $\mathcal S_\preceq^\K$ have support in the dual of one of these cones.\\
 The construction of the algebraically closed fields $\mathcal S^\K_\preceq$ has been given in \cite{AR}  and is based on systematic constructions  of algebraically closed valued fields due to Rayner \cite{ra}. \\
The proofs of  Theorems \ref{main_thm3} and \ref{main_thm2} are much more involved. First they require  the introduction of intermediate cones that we have to describe and compare with $\t(\xi)$. 
Then we need to prove an extension of Dickson's Lemma for general rational cones (see Proposition \ref{int_cones}) that will help us to show the existence of the finite set $C$ of Theorem \ref{main_thm2}. \\
Finally we investigate the positive characteristic case. We begin by constructing algebraically closed fields containing $\K(\!(x)\!)$. Each of these fields depends on an order $\preceq$ on $\Q^n$, and their definition extends the definition of $\mathcal S_\preceq^\K$ to the case of a positive characteristic field $\K$. Then we provide several examples showing that Theorems \ref{main_thm3} and \ref{main_thm2} as long as Proposition \ref{prop_key}, that is the key tool to prove Theorem \ref{main_thm}, are no longer true in the positive characteristic case.
\\
\\
The authors are very grateful to the referee, who made a great work helping the authors to clarify the  paper. They also thank Diane MacLagan who brought to their attention a mistake in a previous version of this work.


\section{Orders and algebraically closed fields containing $\K(\!(x)\!)$}
In this section we introduce the tools needed for the proof of Theorem \ref{main_thm}.


\subsection{The space of orders on ${\R_{\geq0}}^n$}
\begin{defi}\label{cone_def}
Let us recall that a \emph{cone} $\t\subset \R^n$ is a subset of $\R^n$ such that for every $t\in\t$ and $\la\geq 0$, $\la t\in\t$. 
A cone $\t\subset \R^n$ is \emph{polyhedral} if it has the form
$$\t=\{\la_1u_1+\cdots+\la_s u_s\mid \la_1,\ldots,\la_s\geq 0\}$$
for some given vectors $u_1$, \ldots, $u_s\in\R^n$. A cone is said to be a \emph{rational cone}  if it is polyhedral, and the $u_i$ can be chosen in $\Z^n$.\\
A cone is \emph{strongly convex} if it does not contain any non trivial linear subspace.\\
In practice, as almost all the cones that we consider in this paper are polyhedral cones, the term cone will always refer to polyhedral cones (unless stated otherwise).\\ 
The \emph{dual} $\sigma^{\vee}$ of a cone $\sigma$ is the cone given by 
$$\sigma^{\vee} :=  \{ v \in \R^n \mid v \cdot u \geq 0,\, \text{for all}\, u\in \sigma \}$$
where $u\cdot v$ stands for the dot product $(u_1,\ldots ,u_n)\cdot (v_1,\ldots ,v_n):= u_1v_1+\cdots +u_nv_n$.
\end{defi}

\begin{rem}\label{basic_tau}
Let $\xi$ be a series and $\o\in\t(\xi)$. Then $\Supp(\xi)\subset \g+\lg \o\rg^\vee$ for some $\g\in\Z^n$. Indeed it is enough to choose $\g$ such that  $\Supp(\xi)\cap\{u\in\R^n\mid u\cdot \o\leq \g\cdot \o\}=\emptyset$.
\end{rem}

\begin{defi}
A \emph{preorder} on an abelian group $G$ is a binary relation $\preceq$ such that
\begin{itemize}
\item[i)] $\forall u, v\in G$, $u\preceq v$ or $v\preceq u$,
\item[ii)] $\forall u,v,w\in G$, $u\preceq v$ and $v\preceq w$ implies $u\preceq w$,
\item[iii)] $\forall u,v,w\in G$, $u\preceq v$ implies $u+w\preceq v+w$,

\end{itemize}
The set of preorders on $G$ is denoted by $\ZR(G)$. The set of orders on $G$ is a subset of $\ZR(G)$ denoted by $\Ord(G)$.
\end{defi}

\begin{defit}\label{robbiano}
By \cite[Theorem 2.5]{R}\label{rob} for every $\preceq\in\ZR(\Q^n)$ there exist an integer $s\geq 0$ and orthogonal vectors $u_1$, \ldots, $u_s\in{\R}^n$ such that
$$\forall u,v\in \Q^n,\ u\preceq v \Longleftrightarrow (u\cdot u_1, \ldots, u\cdot u_s)\leq_{\text{lex}} (v\cdot u_1,\ldots, v\cdot u_s).$$
For such a preorder we set $\preceq\,:=\,\leq_{(u_1,\ldots, u_s)}$. Such a preorder extends in an obvious way to a preorder on $\R^n$ and the preorders of this form are called \emph{continuous preorders}.\\
We remark that the orthogonality condition is not essential as, if $U_j$ denotes the linear subspace generated by  $u_1$, \ldots, $u_{j-1}$, and $v_j$ is chosen in $u_j+U_j$ for every $j\geq 2$, then $\leq_{(u_1,\ldots, u_s)}=\leq_{(u_1,v_2,\ldots, v_s)}$.
\end{defit}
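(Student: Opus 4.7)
The plan is to prove existence of the orthogonal family $u_1,\ldots,u_s$ by induction on $n$, following Robbiano's original argument; the base $n=0$ is vacuous with $s=0$, and the extension to $\R^n$ is then immediate from the lex formula together with continuity of the dot products.

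For the inductive step, given $\preceq\in\ZR(\Q^n)$, introduce the positive cone $P:=\{v\in\Q^n\mid 0\preceq v\}$. By (iii), $P$ is an additive submonoid of $\Q^n$, and (i) forces $P\cup(-P)=\Q^n$. Its closure $C:=\overline{P}\subset\R^n$ is therefore a closed convex cone satisfying $C\cup(-C)=\R^n$. A first key step is to check that any such $C$ is either all of $\R^n$ or a closed halfspace: if $C\neq\R^n$, Hahn--Banach produces $u_1\in\R^n\setminus\{0\}$ with $C\subseteq\{v\mid u_1\cdot v\geq 0\}$; any $v$ with $u_1\cdot v>0$ satisfies $-v\notin C$, hence $v\in C$, and closedness forces the equality $C=\{v\mid u_1\cdot v\geq 0\}$.

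In the halfspace case, set $L_1:=\Q^n\cap u_1^{\perp}$, a free $\Q$-module of rank at most $n-1$ lying inside $u_1^{\perp}\subset\R^n$, and apply the inductive hypothesis to $\preceq|_{L_1}$ (viewed inside $u_1^{\perp}$ with its induced inner product). This yields orthogonal vectors $u_2,\ldots,u_s\in u_1^{\perp}$, each automatically orthogonal to $u_1$, giving a pairwise orthogonal family on $\R^n$. Verifying that $(u_1,\ldots,u_s)$ describes $\preceq$ on all of $\Q^n$ proceeds by splitting according to the sign of $u_1\cdot(v-u)$: a strict sign lands $v-u$ in the open halfspace (so it or its negative lies in $P$) and matches the lex order on the first coordinate, while a zero sign reduces to the inductive description on $L_1$. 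A Gram--Schmidt correction $u_i\mapsto u_i-\sum_{j<i}(u_i\cdot u_j/\|u_j\|^2)u_j$ preserves every lex comparison on $\Q^n$ and handles any residual orthogonalization.

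The main obstacle is the borderline case $C=\R^n$. Here I would argue that the preorder is trivial, so $s=0$ works. Letting $N:=\{v\in\Q^n\mid 0\preceq v\preceq 0\}$, one checks that $N$ is a $\Q$-subspace and the induced order on $\Q^n/N$ is antisymmetric with positive cone still dense in $(\Q^n/N)\otimes\R$. Standard arguments in the theory of ordered abelian groups (for instance via Hahn's embedding theorem, which embeds any finite-rank ordered abelian group into a lex-ordered $\R^k$ and thereby identifies the closure of its positive cone with a halfspace) rule out a nontrivial order with dense positive cone, forcing $N=\Q^n$ and hence $\preceq$ to be trivial. Ruling out this density case cleanly is the subtle point of the proof; once done, the induction closes.
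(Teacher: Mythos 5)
The paper itself offers no proof here: this Theorem-Definition is quoted directly from Robbiano's Theorem 2.5, so there is no in-text argument to compare against. Your proposal reproduces the standard convexity strategy behind that theorem (pass to the positive cone $P$, take its $\R$-closure $C$, use $C\cup(-C)=\R^n$ to conclude $C$ is $\R^n$ or a closed halfspace, peel off a linear form $u_1$, and induct on the hyperplane $u_1^\perp$), and the bulk of it is correct, including the three-way case check that $\leq_{(u_1,\ldots,u_s)}$ recovers $\preceq$ and the observation that Gram--Schmidt does not disturb lex comparisons. One small point you should make explicit: $L_1:=\Q^n\cap u_1^\perp$ is a full-rank $\Q$-lattice in the subspace $W:=\R\text{-span}(L_1)\subseteq u_1^\perp$, and there is in general no linear isometry of $\R^m$ onto $W$ carrying $\Q^m$ to $L_1$; so the statement you induct on should be phrased for a preorder on an arbitrary full-rank $\Q$-lattice $L$ inside a Euclidean space $V$, with orthogonality taken in $V$, rather than literally for $\Q^m\subset\R^m$. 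The proof is unchanged in that generality.

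The genuine gap is the borderline case $C=\R^n$, which you yourself flag as the subtle point. The appeal to Hahn's embedding theorem does not close it: Hahn gives some $\Q$-linear order embedding $\psi:\Q^n/N\hookrightarrow\R^k_{\mathrm{lex}}$, and the closure of $\psi(\bar P)$ in $\R^k$ is indeed a lex-halfspace, but $\psi$ bears no relation to the tautological embedding $\Q^n\hookrightarrow\R^n$, so this says nothing about the topological closure of $P$ in $\R^n$, which is exactly what you need to rule out being all of $\R^n$. Here is a self-contained fix. Suppose $\preceq$ is nontrivial, so $P^-:=\Q^n\setminus P$ is nonempty; it is closed under addition and multiplication by $\Q_{>0}$, and $0\notin P^-$. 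The $\R$-convex cone $D$ generated by $P^-$ is proper: if $D=\R^n$ then for $w_0\in P^-$ we could write $-w_0=\sum_i\lambda_i w_i$ with $w_i\in P^-$, $\lambda_i>0$, i.e. $1\cdot w_0+\sum\lambda_i w_i=0$; since the $w_i$ have rational coordinates, the rational solutions of $\mu_0 w_0+\sum\mu_i w_i=0$ are dense in the real ones, so there exist $q_0,q_i\in\Q_{>0}$ with $q_0w_0+\sum q_iw_i=0\in P^-$, a contradiction. Hence, by Hahn--Banach, $P^-\subset D\subset H^+$ for some closed halfspace $H^+$. On the other hand $-P=N\cup P^-$, so $\R^n=\overline{-P}=\overline N\cup\overline{P^-}$; and $\overline N$ is the $\R$-span of $N$, a proper linear subspace of $\R^n$ when $N\neq\Q^n$ (rational vectors that are $\Q$-independent are $\R$-independent), hence nowhere dense. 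Therefore $\overline{P^-}=\R^n$, contradicting $\overline{P^-}\subseteq H^+$. So $N=\Q^n$, $\preceq$ is trivial, and $s=0$ works. With this substituted in place of the Hahn-theorem sentence, your induction closes.
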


\begin{defi}
Let $A\subset\R^n$ and $\preceq$ be a continuous preorder on $\R^n$. We say that $A$ is $\preceq$-positive if 
$$\forall a\in A, \ \ a \succeq \underline 0.$$
\end{defi}

\begin{defi}
Let $\preceq$ be a continuous preorder on $\R^n$ and $A\subset \R^n$. We say that $A$ is $\preceq$-well-ordered if $A$ is well-ordered with respect to $\preceq$.
\end{defi}

\begin{defi}
The set of continuous orders $\preceq$ such that  ${\R_{\geq 0}}^n$ is $\preceq$-positive is denoted by $\Ord_n$.
\end{defi}


\begin{defi}
Given two preorders $\preceq_1$ and $\preceq_2$, one says that \emph{$\preceq_2$ refines $\preceq_1$} if
$$\forall u,v\in\R^n,\ u\preceq_2 v\Longrightarrow u\preceq_1 v.$$
\end{defi}

\begin{rem}\label{ref_rem}
Let $(u_1,\ldots, u_s)$ be nonzero vectors of $\R^n$. Using  Theorem-Definition \ref{robbiano} it is easy to check that for a preorder $\preceq$, $\preceq$ refines $\leq_{(u_1,\ldots, u_s)}$ if and only if there exist vectors $u_{s+1}$, \dots, $u_{s+k}$ such that $\preceq=\leq_{(u_1,\ldots,u_{s+k})}$.
\end{rem}

\begin{lem}\label{int_posi}
Let $\o\in\R^n$ and $\s$ be a strongly convex cone with $\o\in\Int(\s^\vee)$. Then $\s$ is $\preceq$-positive for every order $\preceq$ refining $\leq_\o$.
\end{lem}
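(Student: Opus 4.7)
The plan is to reduce the assertion to a strict-positivity statement for the linear form $\o$ on $\s$, and then pass through the refinement hypothesis. The first step is the standard characterization of the interior of a dual cone: since $\s$ is a strongly convex polyhedral cone, a vector $\o$ belongs to $\Int(\s^\vee)$ if and only if $\o\cdot a>0$ for every $a\in\s\setminus\{\underline 0\}$. The nontrivial direction is immediate: if $\o\in\Int(\s^\vee)$ and $a\in\s\setminus\{\underline 0\}$, then $\o-\varepsilon a\in\s^\vee$ for all sufficiently small $\varepsilon>0$, so $(\o-\varepsilon a)\cdot a\geq 0$, forcing $\o\cdot a\geq \varepsilon\|a\|^2>0$.

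The second step is to translate this strict positivity into the preorder $\leq_\o$. For $a\in\s$ with $a\neq\underline 0$ we have $\o\cdot a>0=\o\cdot\underline 0$, which is precisely the statement that $\underline 0<_\o a$; equivalently, $\underline 0\leq_\o a$ holds while $a\leq_\o \underline 0$ fails. The final step invokes the refinement hypothesis: by definition, $u\preceq v$ implies $u\leq_\o v$, so the contrapositive turns the failure of $a\leq_\o \underline 0$ into the failure of $a\preceq\underline 0$, and the totality axiom (i) of $\preceq$ then yields $\underline 0\preceq a$. The case $a=\underline 0$ follows by reflexivity, so $\s$ is $\preceq$-positive for every order $\preceq$ refining $\leq_\o$. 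There is no substantive obstacle here; the argument is essentially a direct unpacking of the definitions of dual cone, interior, and refinement, combined with the elementary interior-of-dual-cone fact recalled in the first step.
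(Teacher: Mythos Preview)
Your proof is correct and follows essentially the same route as the paper: both begin with the standard fact that $\o\in\Int(\s^\vee)$ forces $\o\cdot a>0$ for every nonzero $a\in\s$, and then conclude $\preceq$-positivity from the refinement hypothesis. The only minor difference is in the second step: the paper invokes Remark~\ref{ref_rem} (the Robbiano form $\preceq=\leq_{(\o,v_1,\dots,v_s)}$) to read off positivity directly, whereas you argue purely from the abstract definition of refinement via contrapositive and totality. Your version is slightly more self-contained since it avoids the structural description of refinements, but the two arguments are otherwise the same.
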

\begin{proof}
If $\o\in\Int(\s^\vee)$, we have that $s\cdot\o>0$ for every $s\in\s\setminus\{0\}$. By Theorem-Definition \ref{ref_rem}, every $\preceq$ refining $\leq_\o$ is equal to $\leq_{(\o,v_1,\ldots, v_s)}$ for some vectors $v_i$. Thus $\s$ is $\preceq$-positive.
\end{proof}

The next easy lemma will be used several times:

\begin{lem}\label{inter_cones}\cite[Lemma 2.4]{AR}
Let  $\sigma_1$ and $\sigma_2$ be two cones and $\g_1$ and $\g_2$ be vectors of $\R^n$. Let us assume that $\s_1\cap\s_2$ is full dimensional. Then there exists a vector $\g\in\Z^n$ such that

	$$\left( \g_1 +\sigma_1\right)\cap\left( \g_2 +\sigma_2\right)\subset \g + \sigma_1\cap\sigma_2.$$
\end{lem}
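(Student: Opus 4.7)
The plan is first to reduce the lemma to producing a single vector $\gamma \in \Z^n$ satisfying $\gamma_1 - \gamma \in \sigma_1$ and $\gamma_2 - \gamma \in \sigma_2$. Indeed, for any $x \in (\gamma_1 + \sigma_1) \cap (\gamma_2 + \sigma_2)$ one can write
$$x - \gamma = (x - \gamma_1) + (\gamma_1 - \gamma) \in \sigma_1 + \sigma_1 = \sigma_1,$$
using that a cone is closed under addition, and symmetrically $x - \gamma \in \sigma_2$. This gives $x - \gamma \in \sigma_1 \cap \sigma_2$, which is exactly the desired inclusion $x \in \gamma + \sigma_1 \cap \sigma_2$.

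To construct such a $\gamma$, I would exploit the full-dimensionality hypothesis on $\sigma_1 \cap \sigma_2$. Its interior is then a nonempty open cone in $\R^n$, and since $\Q^n$ is dense in $\R^n$ this interior contains a rational vector; clearing denominators (a cone is stable under positive scaling, so this preserves membership) yields a vector
$$v \in \Z^n \cap \Int(\sigma_1) \cap \Int(\sigma_2).$$
Now set $\gamma := -Mv$ for a positive integer $M$ to be chosen. Then
$$\gamma_1 - \gamma = \gamma_1 + Mv = M\bigl(v + \gamma_1/M\bigr),$$
and since $v$ lies in the \emph{interior} of $\sigma_1$, for $M$ large enough the perturbation $v + \gamma_1/M$ still belongs to $\sigma_1$; the cone property then gives $\gamma_1 + Mv \in \sigma_1$. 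The same reasoning applied to $\sigma_2$ produces a possibly larger threshold, and taking the maximum of the two values of $M$ yields a single integer vector $\gamma$ with both required properties.

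The only point that requires a moment of care is producing an \emph{integer} interior vector of $\sigma_1 \cap \sigma_2$; this is immediate from the density of $\Q^n$ in $\R^n$ together with the fact that $\Int(\sigma_1 \cap \sigma_2)$ is a nonempty open cone, so no polyhedral combinatorics is needed. The rest is a two-line verification, which is consistent with the ``easy lemma'' flavour of the statement and its use as a basic geometric tool throughout the paper.
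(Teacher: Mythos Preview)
Your argument is correct. Note, however, that the paper does not give its own proof of this lemma: it is simply quoted from \cite[Lemma 2.4]{AR}, so there is no in-paper argument to compare against. Your reduction to finding $\gamma\in\Z^n$ with $\gamma_i-\gamma\in\sigma_i$ for $i=1,2$, followed by the construction $\gamma=-Mv$ with $v$ an integer vector in $\Int(\sigma_1\cap\sigma_2)$, is a clean and standard way to establish the result and matches the ``easy lemma'' status the paper assigns it.
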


Finally we give the following result, which will be used in the proof of Theorem \ref{main_thm2} (this is a generalization of \cite[Corollary 3.10]{AR}):

\begin{lem}\label{lem_union_cones}
Let $\s_1$, \ldots, $\s_N$ be strongly convex cones and let $\o\in\R^n\setminus\{\underline 0\}$. The following properties are equivalent:
\begin{itemize}
\item[i)] We have
$\o\in\Int\left(\bigcup_{i=1}^N\s_i^\vee\right)$. 
\item[ii)] For every order $\preceq\in\Ord(\Q^n)$ refining $\leq_\o$, there is an index $i$ such that $\s_i$ is $\preceq$-positive.
\end{itemize}
\end{lem}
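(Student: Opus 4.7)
The plan is to prove the two implications separately. For (i) $\Rightarrow$ (ii) I would use a perturbation argument: by Remark \ref{ref_rem} write the refining order as $\preceq\,=\,\leq_{(\o,v_1,\ldots,v_k)}$ and test the hypothesis on the one-parameter family $\o_\varepsilon:=\o+\varepsilon v_1+\varepsilon^2 v_2+\cdots+\varepsilon^k v_k$. For (ii) $\Rightarrow$ (i) I would argue by contraposition, using a compactness step to extract a single additional vector $v$ which witnesses the failure of $\preceq$-positivity simultaneously for all $\s_i$.

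For (i) $\Rightarrow$ (ii), let $\preceq$ refine $\leq_\o$ and write it $\leq_{(\o,v_1,\ldots,v_k)}$. Since $\o\in\Int\left(\bigcup_i\s_i^\vee\right)$, for every sufficiently small $\varepsilon>0$ the perturbation $\o_\varepsilon$ lies in $\bigcup_i\s_i^\vee$. As there are only finitely many indices, the pigeonhole principle produces a single $i_0$ and a sequence $\varepsilon_j\to 0^+$ with $\o_{\varepsilon_j}\in\s_{i_0}^\vee$. For any $s\in\s_{i_0}$, the polynomial $P(t):=(\o\cdot s)+\sum_{\ell=1}^k t^\ell\,(v_\ell\cdot s)$ is nonnegative at each $\varepsilon_j$, which forces its first nonzero coefficient (if any) to be strictly positive. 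This is exactly the statement that $(\o\cdot s,v_1\cdot s,\ldots,v_k\cdot s)\geq_{\mathrm{lex}}(0,\ldots,0)$, i.e.\ $\underline 0\preceq s$, so $\s_{i_0}$ is $\preceq$-positive.

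For (ii) $\Rightarrow$ (i) I argue by contraposition. Assume $\o\notin\Int\left(\bigcup_i\s_i^\vee\right)$ and set $I_0:=\{i:\o\in\s_i^\vee\}$; for $i\notin I_0$ there is $s_i\in\s_i$ with $\o\cdot s_i<0$, so such $\s_i$ are automatically not $\preceq$-positive for any refinement of $\leq_\o$. For $i\in I_0$, let $F_i:=\s_i\cap\o^\perp$ be the face of $\s_i$ on which $\o$ vanishes. The key geometric claim is
\[
\bigcup_{i\in I_0}F_i^\vee\neq\R^n.
\]
Granting this, any $v$ outside this union yields, for each $i\in I_0$, a vector $s_i\in F_i$ with $v\cdot s_i<0$, hence $s_i\prec_{(\o,v)}\underline 0$. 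Extending $\leq_{(\o,v)}$ to a continuous total order $\preceq$ on $\Q^n$ by appending real vectors completing $\{\o,v\}$ to a basis of $\R^n$ then disqualifies every $\s_i$, contradicting (ii). To establish the claim, suppose the union were all of $\R^n$: then every unit vector $v$ belongs to some $F_i^\vee$, and a short calculation with a fixed set of generators of $\s_i$ produces $\delta_i>0$ (depending on $\s_i$ and $\o$ only, not on $v$) such that $\o+\varepsilon v\in\s_i^\vee$ for all $\varepsilon\in[0,\delta_i]$. Taking $\delta:=\min_{i\in I_0}\delta_i$ would give $B(\o,\delta)\subset\bigcup_i\s_i^\vee$, contradicting $\o\notin\Int\left(\bigcup_i\s_i^\vee\right)$.

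The main obstacle is this compactness step: extracting a uniform $\delta>0$ independent of the unit vector $v$. This uses crucially that the $\s_i$ are polyhedral, since one must bound uniformly the quantities $(\o\cdot s_j)/|s_j|$ over a finite generating set of each $\s_i$. Once the geometric separation $\bigcup_{i\in I_0}F_i^\vee\neq\R^n$ is in hand, the rest of the contrapositive is a one-shot construction: a single extra vector $v$ (rather than an inductive sequence of them) suffices, because we only need one witness $s_i\in\s_i$ with $s_i\prec\underline 0$ per cone.
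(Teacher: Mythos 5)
Your proof is correct. Both directions work, but only the forward implication takes a genuinely different route. For (i) $\Rightarrow$ (ii), the paper perturbs $\o$ one step at a time along $v_1, v_2, \ldots$, narrowing a nested family of ``active'' index sets $E_1 \supset E_2 \supset \cdots$ and arguing separately about whether the current perturbation has landed in the interior of some $\s_i^\vee$. You instead use the single curve $\o_\varepsilon = \o + \varepsilon v_1 + \cdots + \varepsilon^k v_k$, apply a pigeonhole along a sequence $\varepsilon_j \to 0^+$ to fix one cone $\s_{i_0}$, and then read off lexicographic positivity of $(\o\cdot s, v_1\cdot s, \ldots, v_k\cdot s)$ from the sign of the leading nonzero coefficient of the univariate polynomial $t \mapsto \o_t \cdot s$. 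This replaces the paper's inductive bookkeeping with a short asymptotic argument; it is cleaner and equally rigorous (the case $P \equiv 0$ gives the ``$\geq_{\mathrm{lex}}$'' case, the case $P \not\equiv 0$ forces the leading nonzero coefficient to be positive). For (ii) $\Rightarrow$ (i), your argument is the paper's computation in contrapositive clothing: the intermediate claim $\bigcup_{i\in I_0} F_i^\vee \neq \R^n$ is established exactly by the uniform estimate the paper performs directly, namely $\o + \varepsilon v \in \s_i^\vee$ once $\varepsilon \leq \min_{j:\, \o\cdot s_j > 0}(\o\cdot s_j)$ over the finitely many norm-one generators of $\s_i$; you then take the minimum over $i \in I_0$. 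You are right to flag polyhedrality as essential to both your argument and the paper's, since the uniform $\delta$ comes from minimizing over a finite generating set.
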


\begin{proof}
Let us prove that i) implies ii). Let $\o\in \Int\left(\bigcup_{i=1}^N\s_i^\vee\right)$. We are going to show that for all nonzero vectors $v_1$, \ldots, $v_{n-1}\in\lg \o\rg^\perp$, with $v_j\in\lg \o,v_1,\ldots, v_{j-1}\rg^\perp$ for every $j$, there is an integer $i$ such that $\s_i$ is $\leq_{(\o,v_1,\ldots, v_{n-1})}$-positive. Indeed,  by Remark \ref{ref_rem} every preorder  refining $\leq_\o$ is of the form $\leq_{(\o,v_1,\ldots, v_{j})}$ for $1\leq j\leq n-1$. Therefore ii) is satisfied. So from now on, we fix such  vectors $v_1$, \ldots, $v_{n-1}$.\\
By Lemma \ref{int_posi}, if $\o\in\Int(\s_i^\vee)$ for some $i$, then $\s_i$ is $\preceq$-positive for every $\preceq$-refining $\leq_\o$. In particular it is $\leq_{(\o,v_1,\ldots, v_{n-1})}$-positive.
 Otherwise, let $E_1$ denote the set of indices $i$ such that $\o\in\s_i^\vee$. If $\o$ were in the boundary of $\bigcup_{i\in E_1} \s_i^\vee$, then $\o$ would belong to some $\s_i$ for $i\notin E_1$ because $\o\in \Int\left(\bigcup_{i=1}^N\s_i^\vee\right)$. Thus  $\o\in \Int\left(\bigcup_{i\in E_1} \s_i^\vee\right)$.\\
 Since $\o\in\Int\left(\bigcup_{i\in E_1} \s_i^\vee\right)$, there is $\la_1>0$  such that $\o+\la_1 v_1 \in \Int\left(\bigcup_{i\in E_1} \s_i^\vee\right)$. 
Then two cases may occur:\\ 
(1) Assume $\o+\la_1 v_1\in\Int(\s_i^\vee)$ for some $i\in E_1$. Because $i\in E_1$,   for $s\in \s_i\setminus\{0\}$, either $\o\cdot s>0$, or $\o\cdot s=0$. In this last case we have $v_1\cdot s>0$ since $(\o+\la_1 v_1)\cdot s>0$ and $\la_1 >0$. Therefore  $\s_i$ is $\preceq$-positive for every order $\preceq$ refining $\leq_{(\o,v_1)}$ (In particular it is $\leq_{(\o,v_1,\ldots, v_{n-1})}$-positive).\\
(2) If $\o+\la_1 v_1\notin \Int(\s_i^\vee)$ for every $i\in E_1$, we denote by $E_2$ the set of $i \in E_1$ such that $\o+\la_1 v_1\in \s_i^\vee$. As before we necessarily have $\o+\la_1 v_1\in\Int\left(\bigcup_{i\in E_2} \s_i^\vee\right)$.
Therefore there is $\la_2>0$  such that $\o+\la_1 v_1+\la_2 v_2 \in \Int\left(\bigcup_{i\in E_2} \s_i^\vee\right)$. Once again, if $\o+\la_1 v_1+\la_2 v_2 \in\Int(\s_i^\vee)$ for some $i\in E_2$, $\s_i$ is $\preceq$-positive for every order $\preceq$ refining $\leq_{(\o,v_1,v_2)}$. Otherwise we repeat the same process until one of the two situations occurs:
\begin{enumerate}
\item[a)] there is $j<n-1$ such that  $\o+\la_1v_1+\cdots+\la_j v_j\in \Int(\s_i^\vee)$ for some $i$. Then, we can prove in the same way as (1) that $\s_i$ is $\preceq$-positive for every $\preceq$ refining $\leq_{(\o,v_1,\ldots, v_{j})}$ (hence it is $\leq_{(\o,v_1,\ldots, v_{n-1})}$-positive).  
\item[b)] there is no such an index $j$. Thus we end with $\o+\la_1 v_1+\cdots+\la_{n-1} v_{n-1}$ that belongs to (at least) one $\s_i^\vee$. Therefore the cone $\s_i$ is $\leq_{(\o,v_1,\ldots, v_{n-1})}$-positive, because $\o\in\s_i^\vee$, $\o+\la_1v_1 \in\s_i^\vee$, \ldots, $\o+\la_1 v_1+\cdots+\la_{n-1}v_{n-1}\in\s_i^\vee$.
\end{enumerate}
This proves that i) implies ii).\\
Now we prove the converse.  Assume that for every order $\preceq\in\Ord(\Q^n)$ refining $\leq_\o$, there is an index $i$ such that $\s_i$ is $\preceq$-positive.\\
 Let $v$ be a vector with $\|v\|=1$. By assumption, there is an index $i$ such that $\s_i$ is $\leq_{(\o,v)}$-positive. Let $s_1$, \ldots, $s_l$ be generators of $\s_i$ that we assume to be of norm equal to 1. Reordering the $s_j$, there is an integer $k\geq 0$ such that 
$s_j\cdot \o>0$ for every $j\leq k$, and $s_j\cdot \o=0$ for every $j>k$, because $\s_i$ is $\leq_{(\o,v)}$-positive. Take $\la>0$. When $k>1$ assume moreover that  $\dfrac{\min_{\ell\leq k}\{s_\ell\cdot\o\}}{2}\geq \la$. Then we claim that $\o+\la v\in\s_i^\vee$. Indeed, if $j\leq k$ we have
$$(\o+\la v)\cdot s_j=\o\cdot s_j +\la v\cdot s_j\geq \o\cdot s_j-\la \|v\| \|s_j\|\geq \dfrac{\min_{\ell\leq k}\{s_\ell\cdot\o\}}{2}>0.$$
If $j>k$ we have
$$(\o+\la v)\cdot s_j=\la v\cdot s_j \geq 0$$
since $\s_i$ is $\leq_{(\o,v)}$-positive. This implies that $\o+\la v\in\s_i^\vee$. Since this is true for every $v$, we have $\o\in\Int\left(\bigcup_{i=1}^N\s_i^\vee\right)$. 
\end{proof}

\begin{cor}\label{prop_ne}
Let $\o\in{\R_{\geq 0}}^n\backslash\{\underline 0\}$ and let $\s_1$, \ldots, $\s_N$ be strongly convex  cones which are $\leq_\o$-positive. Assume that for every order $\preceq\in\Ord_n$ refining $\leq_\o$, there is an index $i$ such that $\s_i$ is $\preceq$-positive. Then there is a neighborhood $V$ of $\o$ such that, for every $\o'\in V$ and every $\preceq'\in\Ord_n$ refining $\leq_{\o'}$, there is an index $i$ such that $\s_i$ is $\preceq'$-positive.

\end{cor}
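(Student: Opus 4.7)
The plan is to deduce the corollary as a direct consequence of Lemma \ref{lem_union_cones}, which recasts the refinement condition as the manifestly open condition $\omega \in \Int\left(\bigcup_{i=1}^N \sigma_i^\vee\right)$. The whole argument then reduces to invoking this lemma twice, once in each direction, with the topological observation that interiors are open sandwiched in between.

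Concretely, I would first apply the implication (ii)$\Rightarrow$(i) of Lemma \ref{lem_union_cones} to $\omega$. The hypothesis of the corollary says that every order $\preceq \in \Ord_n$ refining $\leq_\omega$ admits an index $i$ such that $\sigma_i$ is $\preceq$-positive; by the lemma (noting that orders in $\Ord_n$ and continuous orders on $\Q^n$ correspond bijectively via Theorem-Definition \ref{robbiano}, since both are encoded by orthogonal sequences of vectors in $\R^n$), this yields
$$\omega \in \Int\left(\bigcup_{i=1}^N \sigma_i^\vee\right).$$
Next, since this set is open in $\R^n$, I would choose $V$ to be any open neighborhood of $\omega$ contained in it, shrunk so that $\underline 0 \notin V$. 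Finally, for each $\omega' \in V$, the inclusion $\omega' \in \Int\left(\bigcup_{i=1}^N \sigma_i^\vee\right)$ feeds into the reverse implication (i)$\Rightarrow$(ii) of Lemma \ref{lem_union_cones}, giving the desired conclusion: for every order $\preceq'$ refining $\leq_{\omega'}$, there is an index $i$ such that $\sigma_i$ is $\preceq'$-positive.

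There is essentially no obstacle in this proof; Lemma \ref{lem_union_cones} does all the real work by reformulating the refinement condition as an open condition on the weight vector, and the corollary is simply its qualitative stability under perturbation. I note that the hypothesis ``each $\sigma_i$ is $\leq_\omega$-positive'' is not actually used in this argument: any cone failing this property can never be $\preceq$-positive for any $\preceq$ refining $\leq_\omega$, hence may be removed from the family without affecting either hypothesis or conclusion.
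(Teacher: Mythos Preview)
Your approach matches the paper's exactly: apply Lemma~\ref{lem_union_cones} in the direction (ii)$\Rightarrow$(i) to obtain $\omega \in \Int\bigl(\bigcup_i \sigma_i^\vee\bigr)$, take this interior as $V$, and apply the lemma in the reverse direction for each $\omega' \in V$. However, your parenthetical justification is wrong: $\Ord_n$ and $\Ord(\Q^n)$ do \emph{not} coincide. By definition $\Ord_n \subsetneq \Ord(\Q^n)$ is the subset of orders making ${\R_{\geq 0}}^n$ positive. This matters because condition (ii) of the lemma quantifies over all $\preceq \in \Ord(\Q^n)$ refining $\leq_\omega$, while the corollary's hypothesis only covers $\preceq \in \Ord_n$. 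When $\omega \in {\R_{>0}}^n$ every refinement of $\leq_\omega$ is automatically in $\Ord_n$ and there is no issue, but when $\omega$ lies on the boundary of the orthant the two conditions genuinely differ: with $n=2$, $\omega=(1,0)$, and the single cone $\sigma_1={\R_{\geq 0}}^2$, the corollary's hypothesis holds trivially yet $\omega \notin \Int(\sigma_1^\vee)={\R_{>0}}^2$.

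The gap is easily closed. For each $j$ with $\omega_j=0$ adjoin the ray $\rho_j:=\langle -e_j\rangle$ to the family. Any $\preceq\in\Ord(\Q^n)\setminus\Ord_n$ refining $\leq_\omega$ makes some $e_j$ negative; since $\preceq$ refines $\leq_\omega$ this forces $\omega_j\leq 0$, hence $\omega_j=0$, and then $\rho_j$ is $\preceq$-positive. Now Lemma~\ref{lem_union_cones} legitimately applies to the enlarged family and gives $\omega\in\Int\bigl(\bigcup_i\sigma_i^\vee\cup\bigcup_j\rho_j^\vee\bigr)=:V$. For $\omega'\in V$ and $\preceq'\in\Ord_n$ refining $\leq_{\omega'}$, the lemma says some cone in the enlarged list is $\preceq'$-positive; but no $\rho_j$ can be (since $\preceq'\in\Ord_n$ forces $e_j\succ'0$), so it must be some $\sigma_i$. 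The paper's one-line proof glosses over this same boundary issue.
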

\begin{proof}
We have $\o\in\Int\left(\bigcup_{i=1}^N\s_i^\vee\right)$ by the previous lemma. Therefore, the previous lemma shows that we can choose $V=\Int\left(\bigcup_{i=1}^N\s_i^\vee\right)$.
\end{proof}

The following lemma will be used several times:

\begin{lem}\label{values_support} Let $\xi$ be a Laurent series with coefficients in a field $\K$. Assume that $\Supp(\xi)\subset \g+\s$ where $\g\in\Z^n$ and $\s$ is a rational cone. Let $\o\in\s^\vee$. Then, for every $t\in\R$, the set
$$\{u\cdot \o\mid u\in \Supp(\xi)\}\cap \left]-\infty,t\right]$$
is finite.
\end{lem}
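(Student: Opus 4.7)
The plan is to project $\R^n$ along the face $\tau:=\s\cap \o^\perp$ of $\s$, reducing the problem to a setting where the linear form $\o$ is strictly positive on the projected cone and hence bounded slices of that cone contain only finitely many lattice points. Observe first that $\tau$ is a rational polyhedral cone: if $s_1,\ldots,s_m\in\Z^n$ generate $\s$, then for any non-negative combination $v=\sum \la_i s_i$ lying in $\tau$, the equality $0=v\cdot\o=\sum\la_i(s_i\cdot\o)$ with non-negative summands (recall $\o\in\s^\vee$) forces $\la_i=0$ whenever $s_i\cdot\o>0$, so $\tau$ is generated by those $s_i$ with $s_i\cdot\o=0$. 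Let $L$ be the $\R$-linear span of $\tau$, a rational subspace of $\R^n$, and let $\pi\colon\R^n\to V:=\R^n/L$ be the quotient map. Since $L$ is rational, $\pi(\Z^n)$ is a lattice in $V$, and since $L\subset\o^\perp$, the form $u\mapsto u\cdot\o$ factors through $\pi$ to give a well-defined linear form $\bar\o$ on $V$ with $\pi(u)\cdot\bar\o=u\cdot\o$.

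The key step is to show that $\bar\o$ lies in $\Int(\pi(\s)^\vee)$. First I would check the identity $\s\cap L=\tau$: the inclusion $\tau\subseteq\s\cap L$ is tautological, and conversely $\s\cap L\subseteq\s\cap\o^\perp=\tau$. Consequently, for $v\in\s$ one has $\pi(v)=0$ if and only if $v\in\tau$, i.e.\ if and only if $v\cdot\o=0$. Hence $\bar\o$ is strictly positive on every nonzero element of $\pi(\s)$, which makes $\pi(\s)$ strongly convex in $V$ and places $\bar\o$ in $\Int(\pi(\s)^\vee)$. By standard convex geometry this implies that for every $M\in\R$ the polyhedron
$$\pi(\s)\cap\{w\in V\mid w\cdot\bar\o\leq M\}$$
is compact.

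To conclude, fix $t\in\R$ and any $u\in\Supp(\xi)$ with $u\cdot\o\leq t$. Since $u\in\g+\s$, we get $\pi(u)\in(\pi(\g)+\pi(\s))\cap\{w\in V\mid w\cdot\bar\o\leq t\}$, which is a bounded subset of $V$; simultaneously $\pi(u)\in\pi(\Z^n)$, a discrete lattice. Therefore $\pi(u)$ can take only finitely many values, and hence so does $u\cdot\o=\pi(u)\cdot\bar\o$, which is exactly the desired finiteness statement. The only delicate point in this plan is the verification that $\bar\o$ sits in the interior of $\pi(\s)^\vee$ (equivalently, that $\pi(\s)$ is strongly convex); once that is established, the compactness of the relevant slice and discreteness of the lattice $\pi(\Z^n)$ make the conclusion immediate.
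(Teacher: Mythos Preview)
Your proof is correct, but it takes a substantially more elaborate route than the paper's. The paper simply applies Gordan's Lemma: after translating so that $\g=0$, one picks integer generators $v_1,\ldots,v_N$ of the semigroup $\s\cap\Z^n$; since $\o\in\s^\vee$ all the $v_i\cdot\o$ are $\geq 0$, and every value $u\cdot\o$ with $u\in\Supp(\xi)\subset\s\cap\Z^n$ lies in the sub-semigroup of $\R_{\geq 0}$ generated by these finitely many numbers. A finitely generated sub-semigroup of $\R_{\geq 0}$ has only finitely many elements below any given bound $t$, and the lemma follows in two lines.

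Your argument instead quotients by the face $\tau=\s\cap\o^\perp$, so that the induced linear form $\bar\o$ becomes strictly positive on the projected cone $\pi(\s)\setminus\{0\}$; then the slice $\{w\in\pi(\g)+\pi(\s)\mid \bar\o(w)\leq t\}$ is compact, and the discreteness of the lattice $\pi(\Z^n)$ gives finiteness of the possible $\pi(u)$, hence of the values $u\cdot\o$. All the verifications you flag --- rationality of $\tau$, the equality $\s\cap L=\tau$, strong convexity of $\pi(\s)$, and $\bar\o\in\Int(\pi(\s)^\vee)$ --- are sound. What this buys is a more structural, purely convex-geometric picture (and it works verbatim for any lattice, not just $\Z^n$), at the cost of setting up the quotient and checking several auxiliary facts. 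The paper's Gordan-based proof is quicker and arguably the natural one here, since it exploits directly that $\Supp(\xi)$ sits in a finitely generated semigroup rather than passing through the ambient geometry of $\R^n$.
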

\begin{proof} We can make a translation and assume that $\g=0$. Since $\s$ is a rational cone,  by Gordan's Lemma, there exist vectors $v_1$, \ldots, $v_N\in\s\cap\Z^n$ generating  $\s\cap\Z^n$  as a monoid. Since $\o\in\s^\vee$, we have $v_i\cdot \o\geq 0$ for every $i$.\\
By assumption we have $\s=\left\{\sum_{i=1}^Nn_iv_i\mid n_i\in\N\right\}$. Therefore the set
$\{u\cdot \o\mid u\in \Supp(\xi)\}$ is included in the monoid generated by $v_1\cdot\o$, \ldots, $v_N\cdot \o$. Since this monoid is finitely generated, the sets $\{u\cdot \o\mid u\in \Supp(\xi)\}\cap \left]-\infty,t\right]$ are finite.
\end{proof}

\subsection{The space $\Ord_n$ as a compact topological space}
One important tool for the proof of Theorem \ref{main_thm} is the fact that the set of orders $\Ord_n$ is a topological compact space for a well chosen topology. This topology has been introduced by Ewald and Ishida \cite{EI} (see also \cite{DR} for a generalization of this to the sets of preorders on a given group).

\begin{defi}\cite{EI}\cite{T}
The set $\ZR(\Q^n)$  is endowed with a topology for  which the sets
$$\mathcal{U}_{\sigma}:=\left\lbrace \preceq \in \ZR(\Q^n) \text{ such that } \sigma \text{ is } \preceq \text{-positive}\right\rbrace$$
form a basis of open sets where $\s$ runs over the full dimensional strongly convex rational cones.
\end{defi}

\begin{rem}
With this definition we have $\Ord_n=\mathcal U_{{\R_{\geq 0}}^n}\cap \Ord(\Q^n)$.
\end{rem}

We have the following result:

\begin{thm}\label{comp}\cite{EI}
The space $\ZR(\Q^n)$ is compact and $\Ord(\Q^n)$ is closed in $\ZR(\Q^n)$. Moreover every $\mathcal U_\s$ is compact. Therefore $\Ord_n$ is compact.
\end{thm}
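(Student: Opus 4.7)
The standard approach is to embed $\ZR(\Q^n)$ into a Tychonoff cube. I identify each preorder $\preceq$ with its \emph{positive cone} $P_\preceq:=\{u\in\Q^n\mid 0\preceq u\}$; by translation invariance $u\preceq v$ iff $v-u\in P_\preceq$, so this gives an injection
$$\Phi\colon\ZR(\Q^n)\hookrightarrow\{0,1\}^{\Q^n},\qquad \preceq\,\mapsto\,\mathbf{1}_{P_\preceq}.$$
The image of $\Phi$ consists of those $P$ satisfying the two conditions $P\cup(-P)=\Q^n$ (totality) and $P+P\subset P$ (transitivity together with translation invariance), both of which are intersections of clopen subbasic sets of the product topology on $\{0,1\}^{\Q^n}$. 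Hence $\Phi(\ZR(\Q^n))$ is closed in the cube, and so compact by Tychonoff.

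Next, I would compare the Ewald--Ishida topology with the topology pulled back via $\Phi$. For a full-dimensional rational strongly convex cone $\sigma$ with integral generators $s_1,\dots,s_k$, elementary manipulations with the preorder axioms (using that positive rational scalar multiplication preserves positivity, a consequence of totality and transitivity) give $\mathcal{U}_\sigma=\bigcap_i\{\preceq\mid 0\preceq s_i\}$, a finite intersection of subbasic product-open sets; so every $\mathcal{U}_\sigma$ is open in the subspace topology, whence the Ewald--Ishida topology is coarser and therefore also compact. The subtle converse is showing that each subbasic product-open set $\{0\preceq u\}$ is open in the Ewald--Ishida topology: given $\preceq$ in it, one must produce a rational full-dimensional $\preceq$-positive cone $\sigma$ containing $u$. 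Here Robbiano's representation $\preceq\,=\,\leq_{(u_1,\dots,u_s)}$ (Theorem-Definition \ref{robbiano}) is essential; one perturbs $u$ by small rational multiples of vectors chosen relative to the orthogonal family $(u_1,\dots,u_s)$ to obtain $n$ linearly independent $\succ 0$ integer vectors spanning the required cone.

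The remaining statements then follow. Each $\mathcal{U}_\sigma$ is clopen (since the complement of $\{0\preceq s_i\}$ is $\{0\prec -s_i\}$, again expressible as a union of $\mathcal{U}_{\sigma'}$'s by the same Robbiano argument), hence closed in $\ZR(\Q^n)$ and compact. A preorder is an order iff $P_\preceq\cap(-P_\preceq)=\{0\}$, i.e.\ $\forall u\neq 0$, $u\notin P_\preceq$ or $-u\notin P_\preceq$, a closed condition; so $\Ord(\Q^n)$ is closed in $\ZR(\Q^n)$. Finally $\Ord_n=\mathcal{U}_{\R_{\geq 0}^n}\cap\Ord(\Q^n)$ is the intersection of a compact set with a closed set, hence compact. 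The main obstacle throughout is the cone-construction step: producing a rational full-dimensional strongly convex $\preceq$-positive cone around any $\preceq$-positive vector, which rests squarely on Robbiano's orthogonal decomposition of continuous preorders.
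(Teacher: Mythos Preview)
The paper does not supply its own proof of this theorem; it is quoted from \cite{EI}. Your Tychonoff-embedding outline is a legitimate route and is correct up through the compactness of $\ZR(\Q^n)$: the image of $\Phi$ is closed in $\{0,1\}^{\Q^n}$, the Ewald--Ishida topology is coarser than the product subspace topology (since each $\mathcal U_\sigma$ is a finite intersection of cylinder sets), and compactness descends to coarser topologies.

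There is, however, a genuine gap in the second half. The Ewald--Ishida topology is \emph{strictly} coarser than the product topology: already for $n=1$ one has $\ZR(\Q)=\{\text{trivial},\leq,\geq\}$, and the only nonempty basic open sets are $\mathcal U_{\R_{\geq 0}}=\{\text{trivial},\leq\}$, $\mathcal U_{\R_{\leq 0}}=\{\text{trivial},\geq\}$, and their intersection $\{\text{trivial}\}$; the singleton $\{\geq\}=\{\preceq:0\prec -1\}$ is not open. Your claim that $\{0\prec -s_i\}$ is a union of sets $\mathcal U_{\sigma'}$ fails because the trivial preorder (and more generally any strictly coarser preorder) lies in every $\mathcal U_{\sigma'}$ containing $\preceq$ but not in $\{0\prec -s_i\}$. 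Consequently the $\mathcal U_\sigma$ are not clopen, and your deduction that $\Ord(\Q^n)$ is closed from the product-topology description does not transfer to the Ewald--Ishida topology.

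Both conclusions are salvageable with small changes. For $\mathcal U_\sigma$ compact, simply observe that $\mathcal U_\sigma=\bigcap_i\{0\preceq s_i\}$ is closed in the product topology, hence product-compact, hence compact in the coarser Ewald--Ishida topology; no clopenness is needed. For $\Ord(\Q^n)$ closed one must argue directly in the Ewald--Ishida topology: if $\preceq$ is not an order, pick a nonzero rational $u$ with $0\preceq u$ and $0\preceq -u$, and apply your cone-construction step twice to obtain $\preceq$-positive full-dimensional strongly convex rational cones $\sigma_1\ni u$ and $\sigma_2\ni -u$; then $\mathcal U_{\sigma_1}\cap\mathcal U_{\sigma_2}$ is an Ewald--Ishida open neighbourhood of $\preceq$ consisting entirely of non-orders, since every $\preceq'$ in it satisfies $0\preceq' u$ and $0\preceq' -u$.
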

The following lemma will be useful in the sequel:

\begin{lem}\label{compact_alt}
Let $\s_1$, \ldots, $\s_N$ be rational cones such that 
$\Ord_n\subset \bigcup_{k=1}^N\mathcal U_{\s_k}.$
Then
$${\R_{\geq 0}}^n\subset\bigcup_{k=1}^N\s_k^\vee.$$ 
\end{lem}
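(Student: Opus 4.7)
The plan is to verify the inclusion on the open orthant ${\R_{>0}}^n$ first, and then extend to the closed orthant by a closure argument. The key reduction is to Lemma \ref{lem_union_cones}, which translates membership of a point $\omega$ in $\Int\bigl(\bigcup_{k=1}^N\sigma_k^\vee\bigr)$ into the purely order-theoretic statement that every $\preceq\in\Ord(\Q^n)$ refining $\leq_\omega$ makes some $\sigma_k$ be $\preceq$-positive. The hypothesis of the lemma already gives us order-theoretic information, but only about orders in $\Ord_n$, so the main task is to reconcile these two families of orders.

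I would fix an arbitrary $\omega\in{\R_{>0}}^n$ and check that every order $\preceq\in\Ord(\Q^n)$ refining $\leq_\omega$ automatically belongs to $\Ord_n$. Indeed, for $u\in{\R_{\geq 0}}^n\setminus\{\underline 0\}$, strict positivity of every coordinate of $\omega$ forces $u\cdot\omega>0$, so $\underline 0<_\omega u$; since $\preceq$ is a total order refining $\leq_\omega$, this gives $\underline 0\prec u$ (the alternative $u\preceq\underline 0$ would contradict the refinement). Hence ${\R_{\geq 0}}^n$ is $\preceq$-positive, so $\preceq\in\Ord_n$. The cover hypothesis then provides an index $k$ with $\preceq\in\mathcal U_{\sigma_k}$, meaning $\sigma_k$ is $\preceq$-positive. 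Condition ii) of Lemma \ref{lem_union_cones} is thus verified for $\omega$, and the lemma yields
$$\omega\in\Int\Bigl(\bigcup_{k=1}^N\sigma_k^\vee\Bigr)\subset\bigcup_{k=1}^N\sigma_k^\vee.$$

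This establishes ${\R_{>0}}^n\subset\bigcup_{k=1}^N\sigma_k^\vee$. Each $\sigma_k^\vee$ is closed in $\R^n$, so their finite union is closed, and taking closures gives ${\R_{\geq 0}}^n=\overline{{\R_{>0}}^n}\subset\bigcup_{k=1}^N\sigma_k^\vee$, as required. The only delicate point is the identification of orders in $\Ord(\Q^n)$ refining $\leq_\omega$ with orders in $\Ord_n$, which works cleanly precisely because $\omega$ is chosen in the strictly positive orthant; once this is handled, the argument is a direct application of Lemma \ref{lem_union_cones} combined with a routine closedness argument, and I do not foresee any other obstacle.
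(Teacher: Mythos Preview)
Your argument is correct, but it takes a noticeably longer route than the paper's. The paper proves the inclusion directly for every $\omega\in{\R_{\geq 0}}^n$ in two lines: choose \emph{one} order $\preceq\in\Ord_n$ refining $\leq_\omega$ (whose existence is cited from \cite[Lemma 3.18]{AR}); the covering hypothesis gives some $k$ with $\sigma_k$ $\preceq$-positive, and since $\preceq$ refines $\leq_\omega$, positivity propagates down to $\leq_\omega$, i.e.\ $\omega\in\sigma_k^\vee$. No interior, no closure, no Lemma \ref{lem_union_cones}.

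Your approach instead establishes the stronger intermediate fact that ${\R_{>0}}^n\subset\Int\bigl(\bigcup_k\sigma_k^\vee\bigr)$ via Lemma \ref{lem_union_cones}, and then passes to the closure. This is sound, and it has the minor advantage of being self-contained within the paper (you avoid the external citation by showing that for $\omega\in{\R_{>0}}^n$ \emph{every} refining order lies in $\Ord_n$). The cost is invoking the considerably more technical Lemma \ref{lem_union_cones} and adding a closure step, when the paper's key observation --- that $\preceq$-positivity of $\sigma_k$ plus ``$\preceq$ refines $\leq_\omega$'' immediately yields $\omega\in\sigma_k^\vee$ --- already handles all $\omega\in{\R_{\geq 0}}^n$ uniformly.
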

\begin{proof}
Let $\o\in {\R_{\geq 0}}^n$. Let $\preceq\in \Ord_n$ refining $\leq_\o$. Such a $\preceq$ exists by \cite[Lemma 3.18]{AR}. Then $\preceq\in\mathcal U_{\s_k}$ for some $k$. Since $\preceq$ refines $\leq_\o$, we have that $\s_k$ is $\leq_\o$-positive. This means that $\o\in\s_k^\vee$. This proves that ${\R_{\geq 0}}^n\subset\bigcup_{k=1}^N\s_k^\vee$.
\end{proof}



\subsection{Algebraically closed fields containing $\K(\!(x)\!)$ in characteristic zero}

\begin{defi}\label{def_field_0} Let $n$ be a positive integer and $\preceq\in \Ord_n$.\\
For a field $\K$  of characteristic zero, we denote by $\mathcal S_\preceq^\K$ the following set:
$$\left\{ \xi\text{ series}\mid \exists k\in \N^*, \gamma\in\Z^n,  \sigma \preceq\text{-positive rational cone, } 		\Supp  (\xi )\subset (\gamma +\sigma )\cap \frac{1}{k}\Z^n \right\}.$$
\end{defi}
We have the following theorem:

\begin{thm}\label{thm_alg_clos}\cite[Theorem 4.5]{AR}
When $\K$ is an algebraically closed field of characteristic zero, the set $\mathcal S_\preceq^\K$ is an algebraically closed field.
\end{thm}

\begin{defi}\label{tech_def}
For simplicity we will use the following notation: given a characteristic zero field $\K$, a strongly convex rational cone $\s$ containing ${\R_{\geq 0}}^n$ and $k\in\N^\ast$, we set
\begin{equation*}\SS^\K_{\s,k}:=\left\{  \vphantom{\frac{1}{kp^l}}\xi\text{ series }\mid  \exists   \gamma\in\Z^n, \text{ such that } 
\Supp  (\xi )\subset (\gamma +\sigma )\cap \frac{1}{k}\Z^n \right\}. \end{equation*}
\end{defi}


\section{Proofs of Theorem \ref{main_thm}  and Corollary \ref{cor_main}}
We begin by the following remark:

\begin{lem}\label{convex}
Let $\xi$ be a series with support in $\Q^n$. Then $\t(\xi)$ is a convex (non necessarily polyhedral) cone. 
\end{lem}

\begin{proof}
It is straightforward to see that for $\la>0$ and $\o\in \t(\xi)$, $\la \o\in\t(\xi)$. Thus, we need to prove that for $\o_1$, $\o_2\in\t(\xi)$, $\o_1+\o_2\in\t(\xi)$. By Remark \ref{basic_tau}, there exist $\g_1$, $\g_2\in\Z^n$, $\s_1\subset \langle \o_1\rangle^\vee$, $\s_2\subset \langle \o_2\rangle^\vee$ containing ${\R_{\geq 0}}^n$ such that $\Supp(\xi)\subset (\g_1+\s_1)\cap(\g_2+\s_2)$. Thus $\Supp(\xi)\subset \g+\s_1\cap\s_2$ for some $\g\in\Z^n$ by Lemma \ref{inter_cones}. This proves the lemma.
\end{proof}

In order to prove Theorem   \ref{main_thm} we need the following intermediate results:
\begin{lem}\label{key_lem} Let $\K$ be a characteristic zero field. Let $\xi\in \SS^\K_{\s,k}$ where $\s$ is a strongly convex rational cone containing ${\R_{\geq 0}}^n$, and $k\in\N^\ast$ (cf. Definition \ref{tech_def}). 
Let $P\in\K[[x]][T]$ be a monic polynomial of degree $d$ with $P(\xi)=0$. Let us assume that there exists
 $\s_0\supset {\R_{\geq 0}}^n$ a strongly convex rational cone such that $P(T)$ splits in $\SS^\K_{\s_0,k}$.\\
 Then
$$\Int(\s_0^\vee)\cap \tau(\xi)\neq\emptyset\Longrightarrow \s_0^\vee\subset \tau(\xi).$$ 
\end{lem}

\begin{proof}
 Consider a nonzero vector $\o\in\Int(\s_0^\vee)\cap\t(\xi)$. Since $\xi\in \SS^\K_{\s,k}$, there are $k\in\N$, $\g_0\in\Z^n$, and $ \sigma$ a $\leq_\o$-positive rational cone,  such that 
$$\Supp  (\xi )\subset (\gamma_0 +\sigma )\cap \frac{1}{k}\Z^n.$$
 Since $\s$ is $\leq_\o$-positive and strongly convex, there exists an order $\preceq\in\Ord_n$ refining $\leq_\o$ such that $\s$ is $\preceq$-positive (see \cite[Lemma 3.8]{AR}).  Thus $\xi$ is a root of $P$ in $\mathcal S_{\preceq}^\K$.\\
On the other hand, $\o$ is in the interior of $\s_0^\vee$, so $\s_0$ is $\preceq$-positive by Lemma \ref{int_posi}. Thus the roots of $P$ in $\SS^\K_{\s_0,k}$ are the roots of $P$ in $\SS^\K_{\preceq}$ and $\xi$ is one of them. Hence there is some $\g\in\Z^n$ such that
$$\Supp(\xi)\subset\g+\s_0.$$
Now let  $\o'\in\s_0^\vee$. We have
$\s_0\subset \lg \o'\rg^\vee.$
Hence $\o'\in\t(\xi)$. This proves the lemma.
\end{proof}


\begin{cor}\label{cor}
Let $\K$ be an algebraically closed field of characteristic zero and 
let $\xi\in\SS^\K_\preceq$ where $\preceq\in\Ord_n$. Let $P\in\K[[x]][T]$ be a monic polynomial of degree $d$ with $P(\xi)=0$.
Let $\s_i$, $i=1,\ldots, N$, be  strongly convex  rational cones containing ${\R_{\geq 0}}^n$, and $k\in\N^\ast$, satisfying the following properties:
\begin{itemize}
\item[i)] $\displaystyle\bigcup_{i=1}^N \s_i^\vee={\R_{\geq 0}}^n$,
\item[ii)] for every $i$, and every $\preceq\in\mathcal U_{\s_i}$, the roots of $P(T)$ in $\SS^\K_\preceq$ are in $\SS^\K_{\preceq,\s_i,k}$.
\end{itemize}
Then, after renumbering the $\s_i$, there is an integer $l\leq N$ such that 
$$\t(\xi)=\bigcup_{i=1}^l\s_i^\vee.$$
\end{cor}

\begin{proof}
By Lemma \ref{key_lem}, we can renumber the $\s_i$ such that $\s_i^\vee\subset \t(\xi)$ for $i\leq l$ and $\Int(\s_i^\vee)\cap\t(\xi)=\emptyset$ for every $i>l$. So we have $\displaystyle\bigcup_{i=1}^l\s_i^\vee\subset\t(\xi)$.\\
Now, suppose that this inclusion is strict: there is an element $\o\in\t(\xi)$ such that $\displaystyle\o\notin\bigcup_{i=1}^l\s_i^\vee$.  \\
We claim that  $\displaystyle\bigcup_{i=1}^l\s_i^\vee$ is convex. Indeed, assume that it is not. Since the  $\s_i^\vee$ are convex, this implies that there is $\o_{i_1}\in\s_{i_1}^\vee$, $\o_{i_2}\in\s_{i_2}^\vee$ for some $i_1$, $i_2\leq l$, such that $\o_{i_1}+\o_{i_2}\notin \displaystyle\bigcup_{i=1}^l\s_i^\vee$. In this case, the line segment $[\o_{i_1},\o_{i_2}]$ intersects $\displaystyle\bigcup_{i=l+1}^N\s_i^\vee$. Since $\s_{i_1}^\vee$ and $\s_{i_2}^\vee$ are full dimensional, we can replace freely $\o_{i_1}$ and $\o_{i_2}$ by any elements close to them. Thus we may assume that $[\o_{i_1},\o_{i_2}]$ intersects $\Int(\s_m^\vee)$ for some $m>l$. But this contradicts the fact that $\t(\xi)$ is convex (see Lemma \ref{convex}).\\
Therefore, by the Hahn-Banach Theorem there is a hyperplane $H$ separating $\o$ and the convex closed set $\displaystyle\bigcup_{i=1}^l\s_i^\vee$ in the following sense: one open half space delimited by $H$, denoted by $O$, contains $\o$ and $\displaystyle\bigcup_{i=1}^l\s_i^\vee\subset \R^n\backslash \ovl O$. Since $\displaystyle\bigcup_{i=1}^l\s_i^\vee$ is full dimensional, the convex hull $\mathcal C$ of $\o$ and $\displaystyle\bigcup_{i=1}^l\s_i^\vee$ is full dimensional:
 $$\mathcal C:=\left\{\la\o+(1-\la )v\mid v\in  \bigcup_{i=1}^l\s_i^\vee, 1\geq\la\geq 0\right\}.$$
  Thus $\mathcal C\cap O$ contains an open ball $B$. \\
Since $\t(\xi)$ is convex (see Lemma \ref{convex}), $\mathcal C \subset \t(\xi)$ and $B\subset \t(\xi)$. Then $B$ intersects one $\s_m^\vee$ for $m>l$ because $B \subset O$ and we have assumed $\displaystyle\bigcup_{i=1}^N \s_i^\vee={\R_{\geq 0}}^n$. But because $B$ is open, $B\cap\Int(\s_m^\vee)\neq \emptyset$, and this is a contradiction because $B\subset \t(\xi)$ and $\t(\xi)\cap\Int(\s_m^\vee)=\emptyset$ for $m>l$. Therefore the inclusion is not strict and $\displaystyle\bigcup_{i=1}^l\s_i^\vee=\t(\xi)$.
\end{proof}
\begin{prop}\label{prop_key}
Let $\K$ be an algebraically closed field of characteristic zero and $P\in\K[[x]][T]$. There is an integer $N$, strongly convex rational cones $\s_1$, \ldots, $\s_N$ containing ${\R_{\geq 0}}^n$, and $k\in\N^\ast$, such that:
\begin{itemize}
\item[i)] $\Ord_n\subset \bigcup_{i=1}^N\mathcal U_{\s_{i}}$ and $\displaystyle\bigcup_{i=1}^N \s_i^\vee={\R_{\geq 0}}^n$,
\item[ii)] for every $\preceq\in\Ord_n$, there is $j\in\{1,\ldots, N\}$, such that the roots of $P(T)$ in $\SS_\preceq^\K$ belong to $\SS^\K_{\s_j,k}$.
\end{itemize}
\end{prop}

\begin{proof}
By Theorem \ref{thm_alg_clos} for every order $\preceq\in \Ord_n$ there is an element  $\g_{\preceq}\in \Z^n$, and  a $\preceq$-positive  strongly convex rational cone $\s_{\preceq}$ such that the roots of $P$ can be expanded as  series in $\mathcal S_{\preceq}^\K$ with support in $\g_{\preceq}+\s_{\preceq}$.  \\
In particular we have $\Ord_n\subset \mathcal U_{{\R_{\geq 0}}^n}\subset \bigcup_{\preceq}\mathcal U_{\s_\preceq}$. Hence, by Theorem \ref{comp}, we can extract from this family of cones $\s_\preceq$, a finite number of cones, denoted by $\s_{\preceq_1}$, \ldots, $\s_{\preceq_N}$, such that $\Ord_n\subset \bigcup_{i=1}^N\mathcal U_{\s_{\preceq_i}}$. Therefore, by Lemma \ref{compact_alt}, we have that
${\R_{\geq 0}}^n\subset\displaystyle\bigcup_{i=1}^N \s_{\preceq_i}^\vee$. Because the $\s_{\preceq_i}$ contain ${\R_{\geq 0}}^n$, we have 
${\R_{\geq 0}}^n=\displaystyle\bigcup_{i=1}^N \s_{\preceq_i}^\vee.$
Moreover these cones satisfy the following property:
\begin{itemize}
\item[] $\forall \preceq\in\Ord_n$, $\exists \g_\preceq\in\Z^n$, $\exists i\in\{1,\ldots, N\}$, such that the roots of $P(T)$ in $\SS^\K_\preceq$ have support in $\g_\preceq+\s_{\preceq_i}$.
 \end{itemize}
Assume that the same integer $i\in\{1,\ldots, N\}$ satisfies the previous property for two orders
$\preceq_1$ and $\preceq_2\in\Ord_n$. That is,  the roots of $P$ in $\mathcal S_{\preceq_1}^\K$ (resp. in $\mathcal S_{\preceq_2}^\K$) have support in $\g_{\preceq_1}+\s_i$ (resp. in $\g_{\preceq_2}+\s_i$). Then the roots of $P$ in $\mathcal S_{\preceq_2}^\K$ are elements of $\mathcal S_{\preceq_1}^\K$, thus the roots of $P$ in $\mathcal S_{\preceq_2}^\K$ coincide with its roots in $\mathcal S_{\preceq_1}^\K$. Therefore we may assume that the element $\g_\preceq$  does depend only on $i$.
\end{proof}

\begin{proof}[Proof of Theorem \ref{main_thm}] 
First, by replacing each of the $x_i$ by some power of  $x_i$, we may assume that $\xi$ is a Laurent series.
By Proposition \ref{prop_key}, there exist strongly convex  rational cones $\s_1$, \ldots, $\s_N$ satisfying i) and ii) of Corollary \ref{cor}. Therefore, by Corollary \ref{cor}, we have that $\t(\xi)$ is a strongly convex rational cone.  This proves Theorem \ref{main_thm}.
\end{proof}

\begin{rem}
For a formal power series $f\in\K[[x]]$ we denote by $\NP(f)$ its Newton polyhedron.
Let $p$ be a vertex of $\NP(f)$. The set of vectors $v\in\R^n$ such that $p+\lambda v\in \NP(f)$ for some  $\lambda\in \R_{\geq0}$ is a rational strongly convex cone. Such a cone  is called the \emph{cone of the Newton polyhedron of $f$ associated with the vertex $p$}.  We have the following generalization of Abhyankar-Jung Theorem that provides in an effective way some cones satisfying Corollary \ref{cor}:
\end{rem}
\begin{thm}[Strong form of the Abhyankar-Jung Theorem] \cite[Th\'eor\`eme 3]{GP}\cite[Theorem 7.1]{A}\cite[Theorem 6.2]{PR}\label{AB}
Let $\K$ be a characteristic zero field. Let $P(Z)\in\K[[x]][Z]$ be a monic polynomial and let $\D$ be its discriminant. Let $\NP(\D)$ denote the Newton polyhedron of $\D$. Then the set of cones of $\NP(\D)$ satisfies the properties of Corollary \ref{cor}.
\end{thm}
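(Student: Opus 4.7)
The plan is to deduce the statement from the classical one-variable Abhyankar-Jung theorem by performing, for each vertex of $\NP(\D)$, a toric monomial substitution that makes the discriminant equal to a monomial times a unit, and then to combine the supports obtained in the different toric charts by means of Lemma \ref{inter_cones}.

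To verify condition (i) of Corollary \ref{cor}, write $\NP(\D)=\Conv(\Supp(\D))+{\R_{\geq 0}}^n$ and let $p_1,\ldots,p_N$ be its vertices, with $\s_k$ the cone of $\NP(\D)$ at $p_k$. Each $\s_k$ is a strongly convex rational cone containing ${\R_{\geq 0}}^n$, and for every $\o\in {\R_{\geq 0}}^n$ the linear form $v\mapsto v\cdot\o$ attains its minimum on $\NP(\D)$ at some vertex $p_k$, which is equivalent to $\o\in \s_k^\vee$; hence $\bigcup_k\s_k^\vee={\R_{\geq 0}}^n$.

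To verify condition (ii), fix a vertex $p=p_k$ with cone $\s:=\s_k$ and choose a regular subdivision of $\s^\vee$ into $n$-dimensional smooth subcones $\tau_1^\vee,\ldots,\tau_L^\vee$. For each $l$, the primitive ray generators $\o_1,\ldots,\o_n$ of $\tau_l^\vee$ form a $\Z$-basis of $\Z^n$, whose dual basis $v_1,\ldots,v_n$ generates the semigroup $\tau_l\cap\Z^n$ freely; the monomial change of variables $y_i=x^{v_i}$ then identifies the cone $\tau_l$ with the standard orthant in $y$-exponents. Because every weight in $\tau_l^\vee\subset\s^\vee$ attains its minimum on $\NP(\D)$ at $p$, and $p\in\Supp(\D)$ (as $p$ is a vertex of $\Conv(\Supp(\D))$), the discriminant takes the form $\D=y^{\alpha_l}\cdot u_l(y)$ with $u_l$ a unit in $\K[[y]]$. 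The classical Abhyankar-Jung theorem then produces $d$ distinct roots of $P$ in $\K[[y^{1/m_l}]]$, which pull back to $d$ roots in the original $x$-coordinates with support contained in $\g_l+\tau_l$ for some $\g_l\in\Z^n$ and $m_l\in\N^*$.

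The support of each root is a single fixed subset of $\Q^n$, so it lies in $\bigcap_l(\g_l+\tau_l)$. By cone duality $\bigcap_l\tau_l=\big(\bigcup_l\tau_l^\vee\big)^\vee=(\s^\vee)^\vee=\s$, and iterated application of Lemma \ref{inter_cones} gives $\bigcap_l(\g_l+\tau_l)\subset \g+\s$ for some $\g\in\Z^n$. Picking any $\o\in\Int(\s^\vee)$ and any $\preceq\in\Ord_n$ refining $\leq_\o$, the cone $\s$ is $\preceq$-positive by Lemma \ref{int_posi}, so the $d$ roots all lie in $\mathcal S^\K_\preceq$ and have support in $\g+\s$, establishing condition (ii) of Corollary \ref{cor}. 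The main technical obstacle will be making the toric monomial substitution rigorous across the various charts and verifying that classical Abhyankar-Jung transports roots faithfully between them; this is precisely the content of \cite{GP}, \cite{A}, and \cite{PR}.
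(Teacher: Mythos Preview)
The paper does not supply its own proof of this statement: Theorem~\ref{AB} is quoted inside a remark, with proof deferred entirely to the cited references \cite{GP}, \cite{A}, \cite{PR}. So there is no argument in the paper to compare against; your sketch is in fact an outline of the strategy carried out in those references (toric chart at each vertex of $\NP(\D)$, discriminant becomes monomial times unit, apply classical Abhyankar--Jung, glue).

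One point deserves more care than you give it. When you write ``the support of each root is a single fixed subset of $\Q^n$, so it lies in $\bigcap_l(\g_l+\tau_l)$'', you are implicitly assuming that the $d$ roots produced in chart $\tau_l$ and the $d$ roots produced in chart $\tau_{l'}$ are \emph{the same} $d$ series. A priori each chart gives you $d$ roots in its own Puiseux ring, and matching them up is exactly the gluing step you postpone to the last sentence. The cleanest way to justify it within the paper's framework is: for adjacent maximal cones $\tau_l^\vee$, $\tau_{l'}^\vee$ sharing a facet, pick $\o$ in the relative interior of that facet and any $\preceq\in\Ord_n$ refining $\leq_\o$; both $\tau_l$ and $\tau_{l'}$ are then $\preceq$-positive, so both families of roots lie in the single field $\mathcal S^\K_\preceq$, where $P$ has exactly $d$ roots, forcing the two families to coincide as sets. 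Propagating this across the subdivision gives the common support you need. With that clarification your argument is a faithful summary of the cited proofs.
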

Therefore, if $\xi$ is integral over $\K[[ x]]$, that is $P(T)$ is a monic polynomial in $T$, we may replace the use of Corollary \ref{cor} (thus Proposition \ref{prop_key} and thus Theorems \ref{comp} and \ref{thm_alg_clos}) by Theorem \ref{AB}.

Now we are able to  prove  Corollary \ref{cor_main}:

\begin{proof}[Proof of Corollary \ref{cor_main}]
By replacing $\K$ by its algebraic closure, we may assume that $\K$ is algebraically closed.
Since $\t(\xi)$ is rational, let $\o_1$, \ldots, $\o_s\in\Z^n$ be generators of $\t(\xi)$. Thus we have $\t(\xi)^\vee=\bigcap_{i=1}^s \langle \o_i\rangle^\vee$. Therefore, we have $\Supp(\xi)\subset \g+\t(\xi)^\vee$ for some $\g\in\Z^n$ by  Remark \ref{basic_tau} and Lemma \ref{inter_cones}.\\
On the other hand if $\s$ is a cone (not necessarily finitely generated) such that 
$\Supp(\xi)\subset \g+\s$ for some $\g\in\Z^n$, then we have $\s^\vee\subset \t(\xi)$ by the definition of $\t(\xi)$, that is, $\t(\xi)^\vee\subset \s$.
\end{proof}


\section{Proof of Theorems \ref{main_thm3} and \ref{main_thm2}}\label{section4}
%
%
%
\subsection{Preliminary results}
\begin{defi}
For a Laurent series $\xi$ we set
$$\tau'_0(\xi )= \left\{ \omega\in {\R_{\geq 0}}^n\setminus\{\underline0\}\mid \# \left(\Supp (\xi )\cap \left\{  u\in\R^n\mid u\cdot\omega\leq k\right\}\right)<\infty, \forall k\in\R\right\},$$
$$\tau'_1(\xi )= \left\{ \omega\in {\R_{\geq 0}}^n\setminus\{\underline0\}\mid \# \left(\Supp (\xi )\cap \left\{  u\in\R^n\mid u\cdot\omega\leq k\right\}\right)=\infty, \forall k\in\R\right\}.$$
\end{defi}
We have the following lemma:

\begin{lem}\label{compar}
Let $\xi$ be a Laurent series with support in a translation of a strongly convex cone containing ${\R_{\geq 0}}^n$.
We have $\t'_0(\xi)\subset\t(\xi)\subset\ovl{\t'_0(\xi)}$.
\end{lem}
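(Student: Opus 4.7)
The plan is to establish the two inclusions separately; each reduces to a short direct argument.

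First I would prove $\tau'_0(\xi)\subset\tau(\xi)$ by a minimum argument. Fix $\omega\in\tau'_0(\xi)$ (the case $\Supp(\xi)=\emptyset$ being trivial) and choose any $u_0\in\Supp(\xi)$. The set $A:=\Supp(\xi)\cap\{u\mid u\cdot\omega\leq u_0\cdot\omega\}$ is non-empty and finite by definition of $\tau'_0$, so $m:=\min\{u\cdot\omega\mid u\in A\}$ exists. Any element of $\Supp(\xi)$ with $\omega$-value strictly below $m$ would also belong to $A$, contradicting the choice of $m$; hence $m$ is in fact the minimum of $u\cdot\omega$ over all of $\Supp(\xi)$, and any $k<m$ witnesses $\omega\in\tau(\xi)$.

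For the reverse inclusion $\tau(\xi)\subset\overline{\tau'_0(\xi)}$, the idea is to perturb $\omega\in\tau(\xi)$ into the interior of a dual cone. Let $\s$ be a strongly convex (polyhedral) cone with ${\R_{\geq 0}}^n\subset\s$, and let $\g\in\Z^n$ be such that $\Supp(\xi)\subset\g+\s$. Since $\s$ is strongly convex, $\s^\vee$ is full-dimensional, so $\Int(\s^\vee)$ is non-empty; moreover $\s^\vee\subset{\R_{\geq 0}}^n$ because ${\R_{\geq 0}}^n\subset\s$. Fix once and for all $\omega_0\in\Int(\s^\vee)$. Writing any element of $\s$ as a non-negative combination of a fixed set of generators, each of which has strictly positive inner product with $\omega_0$, one verifies that for every $M\in\R$ the set $\{u\in\g+\s\mid u\cdot\omega_0\leq M\}$ is bounded and therefore meets $\Z^n$ (a fortiori $\Supp(\xi)$) in a finite set.

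Now, given $\omega\in\tau(\xi)$, I would set $\omega_\e:=\omega+\e\omega_0$ for $\e>0$. Then $\omega_\e\in{\R_{\geq 0}}^n\setminus\{\underline 0\}$, since $\omega,\omega_0\in{\R_{\geq 0}}^n$ with $\omega_0\neq\underline 0$. Because $\omega\in\tau(\xi)$, there is $k_0\in\R$ with $u\cdot\omega>k_0$ for every $u\in\Supp(\xi)$; hence $u\cdot\omega_\e\leq k$ forces $u\cdot\omega_0<(k-k_0)/\e$, so $\Supp(\xi)\cap\{u\cdot\omega_\e\leq k\}$ is contained in the finite set $\Supp(\xi)\cap\{u\cdot\omega_0\leq(k-k_0)/\e\}$. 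Thus $\omega_\e\in\tau'_0(\xi)$ for every $\e>0$, and letting $\e\to 0^+$ yields $\omega\in\overline{\tau'_0(\xi)}$.

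The only non-trivial step is the perturbation in the second inclusion: one must choose the perturbing direction $\omega_0$ strictly inside $\s^\vee$ so that the level sets of $\omega_0$ on $\g+\s$ are compact, and then combine this with the lower bound on $u\cdot\omega$ coming from $\omega\in\tau(\xi)$. The rest is elementary and uses only standard properties of strongly convex rational cones already exploited in the paper, essentially the finiteness principle behind Lemma \ref{values_support} specialized to interior weights.
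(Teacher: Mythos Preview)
Your proof is correct. The first inclusion is handled in the same spirit as the paper (the paper simply says ``by definition''; your minimum argument makes this explicit).

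For the second inclusion your route differs from the paper's. The paper, given $\omega\in\tau(\xi)$, forms the $\omega$-dependent cone $\sigma\cap\langle\omega\rangle^\vee$ (using Lemma~\ref{inter_cones} to put $\Supp(\xi)$ in a translate of it), observes that $\omega$ lies in the full-dimensional dual $(\sigma\cap\langle\omega\rangle^\vee)^\vee$, and approximates $\omega$ by a sequence in the interior of that dual; it then checks these approximants lie in $\tau'_0(\xi)$ and in ${\R_{\geq 0}}^n$. You instead fix a single direction $\omega_0\in\Int(\sigma^\vee)$ once and for all and perturb every $\omega\in\tau(\xi)$ along it, combining the lower bound $u\cdot\omega>k_0$ with the finiteness of $\omega_0$-sublevel sets in $(\gamma+\sigma)\cap\Z^n$. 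Your argument is a bit more elementary: it avoids Lemma~\ref{inter_cones} and the passage through $(\sigma\cap\langle\omega\rangle^\vee)^\vee$, and the verification that $\omega_\varepsilon\in{\R_{\geq 0}}^n\setminus\{\underline 0\}$ is immediate. The paper's approach, on the other hand, yields the slightly stronger geometric information that $\omega$ is approached from the interior of the dual of a single strongly convex cone containing $\Supp(\xi)$ up to translation, though this extra information is not used later.
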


\begin{proof}
 We have $\t'_0(\xi)\subset \t(\xi)$ by definition.\\
  Let $\o\in\t(\xi)$. Then by Remark \ref{basic_tau}, $\Supp(\xi)\subset \g+\lg \o\rg^\vee$ for some $\g\in\Z^n$.\\
   On the other hand, by hypothesis, $\Supp(\xi)$ is included in $\g'+\sigma$ where $\g'\in\Z^n$ and $\sigma$ is a strongly convex cone such that  ${\R_{\geq 0}}^n\subset \sigma$. Thus,  by Lemma \ref{inter_cones}, $\Supp(\xi)$ is included in a translation of the strongly convex cone $\sigma\cap\lg \o\rg^\vee$.\\ We have
   $\o \in {\lg \o\rg^\vee}^\vee\subset \left(\sigma\cap\lg \o\rg^\vee \right)^\vee,$
    and $\left(\sigma\cap\lg \o\rg^\vee \right)^\vee$ is full dimensional. Thus there exists a sequence $(\o_k)_k$ of vectors in $ \mathrm{Int}\left(\left(\sigma\cap\lg \o\rg^\vee \right)^\vee\right)$  that converges to $\o$.\\
  We have to prove that the $\o_k$ belong to $\t'_0(\xi)$. For $u\in (\s\cap\lg \o\rg^\vee)\setminus\{\underline0\}$, we have $u\cdot\o_k\neq 0$ because $\o_k\in\Int\left(\left(\sigma\cap\lg \o\rg^\vee \right)^\vee\right)$. This shows that $\sigma\cap\lg \o\rg^\vee\cap\lg \o_k\rg^\perp=\{\underline 0\}$. Therefore, because $\Supp(\xi)$ is included in a translation of $\s\cap\lg\o\rg^\vee$, for all $k$ we have:
   $$\o_k\in  \left\{ \omega'\in {\R}^n\mid \# \left(\Supp (\xi )\cap \left\{  u\in\R^n\mid u\cdot\omega'\leq k\right\}\right)<\infty, \forall k\in\R\right\}.$$
    Moreover, because $\o\in\t(\xi)\subset {\R_{\geq 0}}^n$ and ${\R_{\geq 0}}^n\subset \sigma$, we have ${\R_{\geq 0}}^n=({\R_{\geq 0}}^n)^\vee  \subset \sigma\cap\lg \o\rg^\vee$. Therefore the $\o_k$ are in ${\R_{\geq 0}}^n$, and they are nonzero for $k$ large enough because $(\o_k)_k$ converges to $\o$ which is nonzero.
This shows that $\o_k\in\t'_0(\xi)$ for $k$ large enough, therefore $\o\in\ovl{\t'_0(\xi)}$.
\end{proof}

\begin{cor}\label{tau_clos} Under the hypothesis of Theorem \ref{main_thm2}, we have 
$$\t(\xi)=\ovl{\t'_0(\xi)}.$$
\end{cor}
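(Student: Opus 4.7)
The plan is to combine Lemma \ref{compar} with Theorem \ref{main_thm} in a very short argument. From Lemma \ref{compar}, and without using algebraicity, I already have the two inclusions
$$\t'_0(\xi)\subset \t(\xi)\subset \ovl{\t'_0(\xi)}.$$
So the only thing left is to establish the reverse inclusion $\ovl{\t'_0(\xi)}\subset \t(\xi)$, which is equivalent to showing that $\t(\xi)$ is closed.

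Here is where the hypotheses of Theorem \ref{main_thm2} enter: since $\xi$ is assumed to be algebraic over $\K(\!(x)\!)$ and to have support in a translation of a strongly convex cone containing ${\R_{\geq 0}}^n$, Theorem \ref{main_thm} applies directly and tells us that $\t(\xi)$ is a strongly convex rational cone. In particular, being a polyhedral (rational) cone, $\t(\xi)$ is a closed subset of $\R^n$.

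Taking closures in the second inclusion of Lemma \ref{compar} then yields
$$\ovl{\t'_0(\xi)}\subset \ovl{\t(\xi)}=\t(\xi),$$
and combining this with the first inclusion gives the desired equality $\t(\xi)=\ovl{\t'_0(\xi)}$. There is no real obstacle here: the corollary is essentially the observation that Lemma \ref{compar} becomes an equality as soon as one knows a priori that $\t(\xi)$ is closed, and Theorem \ref{main_thm} is exactly the ingredient that guarantees this closedness under the algebraicity assumption.
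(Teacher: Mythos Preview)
Your proof is correct and follows exactly the same approach as the paper: use Lemma \ref{compar} for the chain $\t'_0(\xi)\subset \t(\xi)\subset \ovl{\t'_0(\xi)}$, then invoke Theorem \ref{main_thm} to conclude that $\t(\xi)$ is a rational (hence polyhedral, hence closed) cone, forcing equality.
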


\begin{proof}
By Lemma \ref{compar} we have $\t'_0(\xi)\subset \t(\xi)\subset \ovl{\t'_0(\xi)}$. Since $\t(\xi)$ is closed (it is a rational cone, thus a polyhedral cone, by Theorem \ref{main_thm}) we have $\t(\xi)=\ovl{\t'_0(\xi)}$. 
\end{proof}

\begin{defi}
In the rest of this section we consider the following setting: $\xi$ is a Laurent series with support included in the translation of a strongly convex rational cone, and $\xi$ is  algebraic over $\K[[x]]$ where $\K$ is a  field of characteristic zero. From now on we enlarge $\K$ in order to  assume that $\K$ is algebraic closed. We denote by $P\in \K[[x]][T]$  the minimal polynomial of $\xi$ and, for any  order $\preceq\in\Ord_n$,  $\xi_1^\preceq$,  \ldots, $\xi_d^\preceq$ denote the roots of $P(T)$ in $\mathcal{S}_\preceq^\K$.  We set
$$\tau_0 (\xi ):= \left\{\omega\in {\R_{\geq 0}}^n\backslash\{\underline0\}\mid \text{ for all }\preceq\text{ that refines }\leq_\omega ,\ \exists i \text{ such that } \xi = \xi_i^\preceq\right\},$$
$$\tau_1 (\xi ):= \left\{\omega\in {\R_{\geq 0}}^n\backslash\{\underline0\}\mid\xi \neq\xi_i^\preceq, \text{ for all }\preceq\text{ that refines }\leq_\omega, \ \forall i= 1,\ldots ,d \right\},$$
\end{defi}

\begin{rem}\label{rem_rap}
These sets were introduced in \cite{AR}, but only for $\o\in{\R_{>0}}^n$. In this case it was proved that $\t_0(\xi)\cap{\R_{>0}}^n=\t'_0(\xi)\cap{\R_{>0}}^n$ and $\t_1(\xi)\cap{\R_{>0}}^n=\t'_1(\xi)\cap{\R_{>0}}^n$ (see \cite[Lemmas 5.8, 5.11]{AR}). Taking into account all the $\o\in{\R_{\geq 0}}^n$ changes the situation. In particular we do not have $\t_0(\xi)=\t'_0(\xi)$ in general (see Example \ref{bad_ex}).
\end{rem}


\begin{prop}\label{prop_eq}
We have $\t_1(\xi)=\t'_1(\xi)$ and $\t'_0(\xi)\subset\t_0(\xi)$.
\end{prop}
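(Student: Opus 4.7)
For the equality $\t_1(\xi)=\t'_1(\xi)$ I argue each inclusion separately by contrapositive. If $\omega\notin\t_1(\xi)$ then $\xi=\xi_i^{\preceq}$ for some $\preceq$ refining $\leq_\omega$ and some index $i$, so $\Supp(\xi)\subset\gamma+\s$ for some $\gamma\in\Z^n$ and a $\preceq$-positive rational cone $\s$. Because $\preceq$ refines $\leq_\omega$, every $v\in\s$ satisfies $v\cdot\omega\geq 0$, hence $u\cdot\omega\geq\gamma\cdot\omega$ for all $u\in\Supp(\xi)$; any real $k<\gamma\cdot\omega$ then makes $\Supp(\xi)\cap\{u:u\cdot\omega\leq k\}$ empty, so $\omega\notin\t'_1(\xi)$. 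Conversely, if $\omega\notin\t'_1(\xi)$ then $\Supp(\xi)\cap\{u:u\cdot\omega\leq k_0\}$ is finite for some $k_0$ and $u\cdot\omega$ is bounded below on $\Supp(\xi)$. Combined with the hypothesis $\Supp(\xi)\subset\gamma_0+\sigma$ ($\sigma$ a rational strongly convex cone containing ${\R_{\geq 0}}^n$) and Lemma \ref{inter_cones}, this yields $\Supp(\xi)\subset\gamma'+\sigma\cap\langle\omega\rangle^\vee$. The cone $\sigma\cap\langle\omega\rangle^\vee$ is $\leq_\omega$-positive, hence $\preceq'$-positive for some $\preceq'\in\Ord_n$ refining $\leq_\omega$ (as in the proof of Lemma \ref{key_lem}), so $\xi$ is a root of $P$ in $\mathcal{S}_{\preceq'}^\K$, giving $\omega\notin\t_1(\xi)$.

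For $\t'_0(\xi)\subset\t_0(\xi)$, fix $\omega\in\t'_0(\xi)$ and an arbitrary $\preceq\in\Ord_n$ refining $\leq_\omega$; the target is $\xi\in\mathcal{S}_\preceq^\K$, which automatically gives $\xi=\xi_i^\preceq$ for some $i$. Proposition \ref{prop_key} supplies rational strongly convex cones $\s_1,\ldots,\s_N$ covering $\Ord_n$ via the basis opens $\mathcal{U}_{\s_k}$, together with systems of $d$ roots $\xi_1^{(k)},\ldots,\xi_d^{(k)}$ of $P$ supported in $\gamma_k+\s_k$ that exhaust the zeros of $P$ in $\mathcal{S}_{\preceq''}^\K$ whenever $\preceq''\in\mathcal{U}_{\s_k}$. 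Choose $k_0$ with $\preceq\in\mathcal{U}_{\s_{k_0}}$; then $\s_{k_0}$ is $\preceq$-positive, hence contained in $\langle\omega\rangle^\vee$, so $\omega\in\s_{k_0}^\vee$. It remains to show $\xi=\xi_i^{(k_0)}$ for some $i$. When $\omega\in\Int(\s_{k_0}^\vee)$, Lemma \ref{key_lem} applied to $\omega$ and $\s_{k_0}$ gives this at once. When $\omega$ lies on the boundary of $\s_{k_0}^\vee$, I approximate $\omega$ by a sequence $\omega_m\in\Int(\s_{k_0}^\vee)\cap\t(\xi)$ converging to $\omega$ --- whose existence near $\omega$ follows from $\omega\in\t(\xi)$ (Lemma \ref{compar}) combined with the identification $\t(\xi)=\bigcup_{k\leq l}\s_k^\vee$ from Corollary \ref{cor}. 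Applying Lemma \ref{key_lem} at each $\omega_m$ produces $\xi=\xi_{i_m}^{(k_0)}$, and finiteness of $\{1,\ldots,d\}$ lets me extract a constant subsequence, so $\xi=\xi_{i_0}^{(k_0)}\in\mathcal{S}_\preceq^\K$.

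The principal obstacle is the boundary case just described. If $\Int(\s_{k_0}^\vee)\cap\t(\xi)$ fails to accumulate at $\omega$, the direct approximation breaks down; in that situation I would use Corollary \ref{cor} to pick an auxiliary index $k'\leq l$ with $\omega\in\Int(\s_{k'}^\vee)\subset\t(\xi)$, run the previous argument with $\s_{k'}$ to obtain $\xi=\xi_{i_0}^{(k')}$, and then identify this Laurent series with some $\xi_j^{(k_0)}$ by comparing the $d$ roots of $P$ in a common ambient field $\mathcal{S}_{\preceq^*}^\K$ for $\preceq^*\in\mathcal{U}_{\s_{k_0}}\cap\mathcal{U}_{\s_{k'}}$. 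The compatibility of these two indexings --- and, more fundamentally, the construction of a $\preceq$-positive rational cone containing $\Supp(\xi)$ up to translation --- ultimately rests on the finite-level-set property $\omega\in\t'_0(\xi)$, which provides a uniform gap $(u-\gamma)\cdot\omega\geq\delta>0$ after an appropriate shift and thereby keeps the support clear of the potentially troublesome face $\s_{k_0}\cap\langle\omega\rangle^\perp$.
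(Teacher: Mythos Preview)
Your treatment of $\t_1(\xi)=\t'_1(\xi)$ is fine and reconstructs what the paper simply cites as \cite[Lemma 5.11]{AR}.

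For $\t'_0(\xi)\subset\t_0(\xi)$, however, you are working much harder than necessary and the argument has a real gap. The paper's proof is three lines: if $\omega\in\t'_0(\xi)$ and $\preceq$ refines $\leq_\omega$, then any $\preceq$-nonincreasing sequence in $\Supp(\xi)$ is also $\leq_\omega$-nonincreasing, hence has only finitely many distinct terms by the finite-level-set hypothesis, hence is eventually constant. So $\Supp(\xi)$ is $\preceq$-well-ordered, and \cite[Corollary 4.6]{AR} gives $\xi\in\mathcal{S}_\preceq^\K$ directly. There is no need for Proposition~\ref{prop_key}, Lemma~\ref{key_lem}, Corollary~\ref{cor}, or any case analysis.

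Your route through the cone decomposition breaks down at the boundary. First, in the ``interior'' case you are invoking the \emph{proof} of Lemma~\ref{key_lem} rather than its statement, which is a minor slippage. More seriously, your fallback when $\Int(\s_{k_0}^\vee)\cap\t(\xi)$ does not accumulate at $\omega$ is to pick $k'\leq l$ with $\omega\in\Int(\s_{k'}^\vee)$; but since every $\s_k$ contains ${\R_{\geq 0}}^n$, every $\s_k^\vee$ is contained in ${\R_{\geq 0}}^n$, so whenever $\omega\in\partial{\R_{\geq 0}}^n$ no such $k'$ exists --- and $\t'_0(\xi)$ can certainly meet $\partial{\R_{\geq 0}}^n$ (Example~\ref{bad_ex}). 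Even granting a suitable $k'$, the matching step requires $\mathcal{U}_{\s_{k_0}}\cap\mathcal{U}_{\s_{k'}}\neq\emptyset$, i.e.\ that $\s_{k_0}+\s_{k'}$ is strongly convex, which you have not established. Finally, the ``uniform gap $(u-\gamma)\cdot\omega\geq\delta>0$'' you invoke in the last paragraph is not what $\omega\in\t'_0(\xi)$ provides: the values $u\cdot\omega$ are bounded below and have finite sublevel sets, but their infimum need not be isolated by a fixed $\delta$. The missing idea is exactly the well-orderedness observation above, which bypasses all of this.
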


\begin{proof}
The proof of the equality $\t_1(\xi)=\t'_1(\xi)$ is exactly the proof of \cite[Lemma 5.11]{AR}.
Let us prove $\t'_0(\xi)\subset\t_0(\xi)$. Let $\o\in\t'_0(\xi)$, in particular:
\begin{equation}\label{eq}
	\#\left(\Supp (\xi )\cap \left\{  u\in\R^n\mid u\cdot\omega\leq k\right\}\right)<\infty, \ \forall k\in\R,
\end{equation}
and let us consider an order $\preceq$ that refines $\leq_\o$.\\
Let $(u_l)_l$ be a sequence of elements of $\Supp(\xi)$ such that $u_l\succeq u_{l+1}$ for every $l\in\N$. Then $u_l\geq_\o u_{l+1}$, that is $u_l\cdot\o \geq u_{l+1}\cdot\o$, for every $l\in\N$. Therefore by (\ref{eq}), this sequence contains only finitely many distinct terms. Therefore $u_{l+1}=u_l$ for $l$ large enough because $\preceq$ is an order. This shows that $\Supp (\xi)$ is $\preceq$-well-ordered. Thus by \cite[Corollary 4.6]{AR} $\xi$ is an element of $\mathcal{S}_\preceq^\K$. This shows that $\o\in\t_0(\xi)$.
\end{proof}


\begin{prop}\label{prop_open}
The sets $\t_0(\xi)$ and $\t_1(\xi)$ are open subsets of ${\R_{\geq 0}}^n$.\end{prop}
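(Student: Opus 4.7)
The plan is to exhibit a neighborhood of every point of $\t_0(\xi)$ (resp.\ $\t_1(\xi)$) inside $\t_0(\xi)$ (resp.\ $\t_1(\xi)$) via Corollary \ref{prop_ne}. First, I apply Proposition \ref{prop_key} to the minimal polynomial $P\in\K[[x]][T]$ of $\xi$, producing strongly convex rational cones $\s_1,\ldots,\s_N\supset{\R_{\geq 0}}^n$ together with series $\xi_1^{(k)},\ldots,\xi_d^{(k)}$ supported in $\g_k+\s_k$, which are the $d$ roots of $P$ in $\mathcal S^\K_\preceq$ for every $\preceq\in\Ord_n$ making $\s_k$ $\preceq$-positive. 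I then set
$$E_0:=\{k\in\{1,\ldots,N\}\mid \xi=\xi_i^{(k)}\text{ for some }i\},\quad E_1:=\{1,\ldots,N\}\setminus E_0.$$

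Next is a key \emph{coherence} observation: if $\s_k$ and $\s_{k'}$ are both $\preceq$-positive for some $\preceq\in\Ord_n$, then both $\{\xi_1^{(k)},\ldots,\xi_d^{(k)}\}$ and $\{\xi_1^{(k')},\ldots,\xi_d^{(k')}\}$ equal the set of all $d$ roots of $P$ in $\mathcal S^\K_\preceq$, so they coincide; hence $k\in E_0$ iff $k'\in E_0$. Combined with Proposition \ref{prop_key} i), this yields the characterizations: $\o\in\t_0(\xi)$ iff every $\preceq\in\Ord_n$ refining $\leq_\o$ has some $k\in E_0$ with $\s_k$ $\preceq$-positive, and $\o\in\t_1(\xi)$ iff every such $\preceq$ has some $k\in E_1$ with $\s_k$ $\preceq$-positive.

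To derive openness of $\t_0(\xi)$, I fix $\o\in\t_0(\xi)$ and put $E_0(\o):=\{k\in E_0 : \s_k \text{ is } \leq_\o\text{-positive}\}$. Since any $\s_k$ that is $\preceq$-positive for a $\preceq$ refining $\leq_\o$ is automatically $\leq_\o$-positive, the characterization above forces every such $\preceq$ to have some $k\in E_0(\o)$ with $\s_k$ $\preceq$-positive. Corollary \ref{prop_ne} applied to the family $\{\s_k\}_{k\in E_0(\o)}$ then provides a neighborhood $V$ of $\o$ such that, for every $\o'\in V\cap({\R_{\geq 0}}^n\setminus\{\underline 0\})$ and every $\preceq'\in\Ord_n$ refining $\leq_{\o'}$, some $k\in E_0$ has $\s_k$ $\preceq'$-positive; hence $V\cap({\R_{\geq 0}}^n\setminus\{\underline 0\})\subset\t_0(\xi)$. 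The same argument, with $E_1$ in place of $E_0$, yields openness of $\t_1(\xi)$.

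The main subtle point is the coherence argument used for $\t_1(\xi)$: one must rule out the possibility that both an $E_0$-cone and an $E_1$-cone are $\preceq$-positive for some refinement $\preceq$ of $\leq_\o$, as this would place $\xi$ in $\mathcal S^\K_\preceq$ and contradict $\o\in\t_1(\xi)$. Coherence forbids such mixed situations, making membership in $\t_1(\xi)$ equivalent to a Corollary \ref{prop_ne}-style condition rather than the (topologically weaker) negation of the $\t_0$ condition.
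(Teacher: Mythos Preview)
Your proof is correct and follows essentially the same strategy as the paper's: both arguments invoke Proposition~\ref{prop_key} to obtain the finite family $\s_1,\ldots,\s_N$ and then apply Corollary~\ref{prop_ne} to propagate the $\t_0$/$\t_1$ property to a neighborhood. The only cosmetic difference is that you organize the argument via the explicit partition $E_0\cup E_1$ of the cone indices (with your coherence observation), whereas the paper passes through a minimal subfamily $\mathcal T_\o\subset\{\s_1,\ldots,\s_N\}$ and argues that minimality forces the roots at nearby $\o'$ to coincide with roots at $\o$; your formulation is arguably more transparent, but the underlying mechanism is identical.
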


\begin{proof}
Let us consider the cones $\s_i$ given by  Proposition \ref{prop_key}. In particular, for every $\o\in{\R_{\geq 0}}^n\backslash\{\underline 0\}$, the set of orders $\preceq\in\Ord_n$ refining $\leq_\o$ is included in
  $\bigcup_{i=1}^N\mathcal U_{\s_i}$. The set $\mathcal T_\o=\{\s_{1},\ldots, \s_{N}\}$ satisfies the following property:
\begin{itemize}
\item[] For any order $\preceq\in\Ord_n$ refining $\leq_\o$, there is $\s\in\mathcal T_\o$, $\s$ being $\preceq$-positive, such that the roots of $P$ in $\mathcal S_\preceq^\K$ are in $\SS_{\s,k}^\K$ for some $k\in\N^\ast$.
\end{itemize}
 Moreover, let us  choose $\mathcal T_\o$ to   be minimal among the sets of cones having this property.  Then Corollary \ref{prop_ne} implies that, for every $\o'\in{\R_{\geq 0}}^n\backslash\{\underline 0\}$ close enough to $\o$, and  for any order $\preceq'\in\Ord_n$ refining $\leq_{\o'}$, there is $\s\in\mathcal T_\o$ such that the roots of $P$ in $\mathcal S_{\preceq'}^\K$ are in $\SS^\K_{\s,k}$ for some $k\in\N^\ast$. Since $\mathcal T_\o$ is minimal with this property, for every $\o'$ close enough to $\o$, for every order $\preceq'\in\Ord_n$ refining $\leq_{\o'}$ and for every $i=1,\ldots, d$, there is an order $\preceq\in\Ord_n$ refining $\leq_\o$  such that $\xi_i^{\preceq'}=\xi_{j_i}^{\preceq}$ for some $j_i$.\\
If $\o\in\t_0(\xi)$ then $\xi$ is equal to some $\xi_i^\preceq$ for every order $\preceq\in\Ord_n$ refining $\leq_\o$. Thus, for every $\o'\in{\R_{\geq 0}}^n$ close enough to $\o$ and every order $\preceq'\in\Ord_n$ refining $\leq_{\o'}$, $\xi=\xi_{j}^{\preceq'}$ for some $j$. Thus $\o'\in\t_0(\xi)$. This proves that $\t_0(\xi)$ is open in ${\R_{\geq 0}}^n$.\\
If $\o\in\t_1(\xi)$ then $\xi\neq\xi_i^\preceq$ for every $i$ and for every order $\preceq\in\Ord_n$ refining $\leq_\o$. Thus, for $\o'\in{\R_{\geq 0}}^n$ close enough to $\o$ and every order $\preceq'\in\Ord_n$ refining $\leq_{\o'}$, $\xi\neq\xi_{j}^{\preceq'}$ for every $j$. Hence $\o'\in\t_1(\xi)$ and $\t_1(\xi)$ is open.
\end{proof}


\begin{cor}\label{cor1}
We have
$$\ovl{\t'_0(\xi)}\cap\t'_1(\xi)=\emptyset.$$
\end{cor}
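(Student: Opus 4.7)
The plan is to deduce this corollary directly from Propositions \ref{prop_eq} and \ref{prop_open} by passing through the "algebraic" cones $\t_0(\xi)$ and $\t_1(\xi)$, where openness gives us leverage that we do not a priori have on $\t'_0(\xi)$ and $\t'_1(\xi)$.

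First I would observe the disjointness $\t_0(\xi)\cap\t_1(\xi)=\emptyset$ at the level of the algebraic cones. This is immediate from the definitions: any $\o\in{\R_{\geq 0}}^n\setminus\{\underline0\}$ admits at least one order $\preceq\in\Ord_n$ refining $\leq_\o$ (by \cite[Lemma 3.18]{AR}, as already used in Lemma \ref{compact_alt}), and for such a $\preceq$ the condition defining $\t_0(\xi)$ says $\xi=\xi_i^\preceq$ for some $i$, while the condition defining $\t_1(\xi)$ says $\xi\neq\xi_i^\preceq$ for every $i$. These are incompatible.

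Next, I would promote this to $\ovl{\t_0(\xi)}\cap\t_1(\xi)=\emptyset$ using openness. By Proposition \ref{prop_open}, $\t_1(\xi)$ is open in ${\R_{\geq 0}}^n$. If $\o$ belonged to $\ovl{\t_0(\xi)}\cap\t_1(\xi)$, then some open neighborhood of $\o$ would lie in $\t_1(\xi)$ and would also contain points of $\t_0(\xi)$, contradicting the disjointness established in the previous step.

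Finally, I would transfer the conclusion back to the "geometric" cones. By Proposition \ref{prop_eq} we have $\t'_0(\xi)\subset\t_0(\xi)$ (hence $\ovl{\t'_0(\xi)}\subset\ovl{\t_0(\xi)}$) and $\t'_1(\xi)=\t_1(\xi)$. Combining these with the previous step yields
$$\ovl{\t'_0(\xi)}\cap\t'_1(\xi)\subset\ovl{\t_0(\xi)}\cap\t_1(\xi)=\emptyset,$$
which is the desired statement. There is no real obstacle here beyond correctly invoking the two propositions; the content of the corollary is entirely in Proposition \ref{prop_open}, whose openness statement is precisely what allows one to absorb the closure on the left-hand side.
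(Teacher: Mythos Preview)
Your proof is correct and follows essentially the same route as the paper's own argument: the paper also notes that $\t_0(\xi)$ and $\t_1(\xi)$ are disjoint and open (via Proposition~\ref{prop_open}), deduces $\ovl{\t_0(\xi)}\cap\t_1(\xi)=\emptyset$, and then transfers this to $\t'_0(\xi)$ and $\t'_1(\xi)$ via Proposition~\ref{prop_eq}. Your version simply spells out the disjointness step and the use of openness in more detail.
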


\begin{proof}
The sets $\t_0(\xi)$ and $\t_1(\xi)$ are disjoint and open in  ${\R_{\geq 0}}^n$. Thus $\ovl{\t_0(\xi)}\cap\t_1(\xi)=\emptyset$. This proves the corollary because $\t'_0(\xi)\subset \t_0(\xi)$ and $\t'_1(\xi)=\t_1(\xi)$ by Proposition \ref{prop_eq}.
\end{proof}


\begin{lem}\label{lem_clos}
We have
$$\ovl{\t'_0(\xi)}=\ovl{\t_0(\xi)\cap{\R_{>0}}^n}=\ovl{\t_0(\xi)}.$$
\end{lem}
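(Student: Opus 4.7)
The plan is to chain together three inclusions using tools already established. The inclusion $\ovl{\t'_0(\xi)}\subset\ovl{\t_0(\xi)}$ is immediate from $\t'_0(\xi)\subset\t_0(\xi)$ provided by Proposition \ref{prop_eq}, and $\ovl{\t_0(\xi)\cap{\R_{>0}}^n}\subset\ovl{\t_0(\xi)}$ is trivial. So the only substantial inclusions to establish are
$$\ovl{\t_0(\xi)}\subset\ovl{\t_0(\xi)\cap{\R_{>0}}^n}\qquad\text{and}\qquad \ovl{\t_0(\xi)\cap{\R_{>0}}^n}\subset\ovl{\t'_0(\xi)}.$$

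For the first inclusion, I would invoke Proposition \ref{prop_open}, which tells us that $\t_0(\xi)$ is open in ${\R_{\geq 0}}^n$. Given $\o\in\t_0(\xi)$ (in particular $\o\neq \underline 0$ by definition), openness provides a relatively open neighborhood $U$ of $\o$ in ${\R_{\geq 0}}^n$ with $U\subset\t_0(\xi)$. Because ${\R_{>0}}^n$ is dense in ${\R_{\geq 0}}^n$, the sequence $\o+\tfrac{1}{k}(1,\ldots,1)$ eventually lies in $U\cap{\R_{>0}}^n$, giving a sequence in $\t_0(\xi)\cap{\R_{>0}}^n$ converging to $\o$. Hence $\o\in\ovl{\t_0(\xi)\cap{\R_{>0}}^n}$.

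For the second inclusion, I would use the equality $\t_0(\xi)\cap{\R_{>0}}^n=\t'_0(\xi)\cap{\R_{>0}}^n$ recalled in Remark \ref{rem_rap} (and proved as \cite[Lemmas 5.8, 5.11]{AR}). Substituting gives
$$\ovl{\t_0(\xi)\cap{\R_{>0}}^n}=\ovl{\t'_0(\xi)\cap{\R_{>0}}^n}\subset\ovl{\t'_0(\xi)},$$
which closes the cycle and yields the triple equality.

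There is no real obstacle here: the argument is a short topological bookkeeping that packages together Proposition \ref{prop_open}, the density of ${\R_{>0}}^n$ in ${\R_{\geq 0}}^n$, and the already-known agreement of $\t_0(\xi)$ and $\t'_0(\xi)$ on the open orthant. The only subtlety worth flagging is that the neighborhoods supplied by Proposition \ref{prop_open} are relatively open inside ${\R_{\geq 0}}^n$ rather than in $\R^n$, but this is precisely what is needed to ensure the approximating sequence lands in ${\R_{>0}}^n$.
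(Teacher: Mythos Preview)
Your argument is correct and follows essentially the same route as the paper: both use the openness of $\t_0(\xi)$ in ${\R_{\geq 0}}^n$ (Proposition \ref{prop_open}) to get $\ovl{\t_0(\xi)}=\ovl{\t_0(\xi)\cap{\R_{>0}}^n}$, and both use the equality $\t_0(\xi)\cap{\R_{>0}}^n=\t'_0(\xi)\cap{\R_{>0}}^n$ from \cite[Lemma 5.8]{AR}.

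The one genuine difference is how you close the loop. The paper proves the extra equality $\ovl{\t'_0(\xi)\cap{\R_{>0}}^n}=\ovl{\t'_0(\xi)}$ by invoking the convexity of $\t'_0(\xi)$ together with a general result from Bourbaki \cite[Prop.~16, Cor.~1; II.2.6]{B}. You bypass this step entirely: from Proposition \ref{prop_eq} you already have $\t'_0(\xi)\subset\t_0(\xi)$, hence $\ovl{\t'_0(\xi)}\subset\ovl{\t_0(\xi)}$, and the cyclic chain of inclusions then forces all three closures to coincide. Your route is slightly more economical, since it uses only ingredients already established in the paper and avoids the external reference to convex-set topology.
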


\begin{proof}
The set $\t_0(\xi)$ is open. Therefore every $w\in\t_0(\xi)\cap({\R_{\geq 0}}^n\backslash{\R_{>0}}^n)$ can be approximated by elements of $\t_0(\xi)\cap{\R_{>0}}^n$.
Hence 
$$\ovl{\t_0(\xi)\cap{\R_{>0}}^n}=\ovl{\t_0(\xi)}.$$
By \cite[Lemma 5.8]{AR} $\t'_0(\xi)\cap{\R_{>0}}^n=\t_0(\xi)\cap{\R_{>0}}^n$. We have that $\t'_0(\xi)$ is convex (the proof is exactly the same as the proof of \cite[Lemma 5.9]{AR}). Thus we have
$$\ovl{\t'_0(\xi)\cap{\R_{>0}}^n}=\ovl{\t'_0(\xi)}$$
by \cite[Prop. 16 - Cor. 1; II.2.6]{B}.
Hence
$$\ovl{\t'_0(\xi)}=\ovl{\t'_0(\xi)\cap{\R_{>0}}^n}=\ovl{\t_0(\xi)\cap{\R_{>0}}^n}=\ovl{\t_0(\xi)}.$$
\end{proof}

\begin{cor}\label{cone_inv}
For every $f\in\K[[x]]^*$ we have
$$\t_0(\xi+f)=\t_0(\xi),\ \t_1(\xi+f)=\t_1(\xi),\ \t(\xi+f)=\t(\xi),$$
$$\t_0(f\xi)\supset\t_0(\xi),\ \t_1(f\xi)\supset\t_1(\xi),\ \t(f\xi)\supset\t(\xi).$$
\end{cor}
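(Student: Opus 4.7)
The plan is to first establish the identities for $\t_0$ and $\t_1$ by a direct algebraic argument on minimal polynomials, and then deduce the identity for $\t$ from the closure formula $\t(\eta)=\overline{\t_0(\eta)}$, obtained by combining Corollary \ref{tau_clos} with Lemma \ref{lem_clos}.

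For the additive statement, I would first note that $f\in\K(\!(x)\!)\subset\mathcal{S}_\preceq^\K$ for every $\preceq\in\Ord_n$ (by Theorem \ref{algclo}), and that $\K(\!(x)\!)(\xi+f)=\K(\!(x)\!)(\xi)$, so $\xi+f$ has degree $d$ over $\K(\!(x)\!)$. The polynomial $P(T-f)\in\K(\!(x)\!)[T]$ is monic of degree $d$ and annihilates $\xi+f$, hence is its minimal polynomial; its roots in $\mathcal{S}_\preceq^\K$ are therefore precisely $\xi_1^\preceq+f,\ldots,\xi_d^\preceq+f$. Thus for any order $\preceq$ refining $\leq_\o$, the equality $\xi+f=\xi_i^\preceq+f$ holds for some (resp.\ for no) index $i$ if and only if $\xi=\xi_i^\preceq$ holds for some (resp.\ for no) $i$. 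Reading off the definitions yields $\t_0(\xi+f)=\t_0(\xi)$ and $\t_1(\xi+f)=\t_1(\xi)$. The multiplicative case is parallel: the polynomial $f^d P(T/f)\in\K(\!(x)\!)[T]$ is monic of degree $d$, vanishes at $f\xi$, and has roots $f\xi_1^\preceq,\ldots,f\xi_d^\preceq$ in $\mathcal{S}_\preceq^\K$; since $f\neq 0$, the equivalence $f\xi=f\xi_i^\preceq \Leftrightarrow \xi=\xi_i^\preceq$ is immediate, giving $\t_0(f\xi)=\t_0(\xi)$ and $\t_1(f\xi)=\t_1(\xi)$.

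To conclude for $\t$, I apply Corollary \ref{tau_clos} and Lemma \ref{lem_clos} to the series $\xi+f$ and $f\xi$, which are algebraic over $\K(\!(x)\!)$ of the same degree $d$ as $\xi$, and whose supports lie in a translate of a strongly convex cone containing ${\R_{\geq 0}}^n$. (For $f\in\K[[x]]$ this is transparent, since $\Supp(f)\subset\N^n\subset\s$ and the sum or product of two series supported in $\g+\s$ remains of that form; the general case of $f\in\K(\!(x)\!)$ reduces to this after clearing denominators, using that $\K(\!(x)\!)\subset\mathcal{S}_\preceq^\K$.) Hence $\t(\xi+f)=\overline{\t_0(\xi+f)}=\overline{\t_0(\xi)}=\t(\xi)$ and likewise $\t(f\xi)=\t(\xi)$. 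The only non-formal step in the whole argument is the identification of the minimal polynomials of $\xi+f$ and $f\xi$ and of their full sets of roots in every $\mathcal{S}_\preceq^\K$; once this is secured, everything else is a matter of inspecting the definitions of $\t_0$ and $\t_1$ and invoking the closure formula already established.
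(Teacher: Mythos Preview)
Your proposal is correct and follows essentially the same approach as the paper: identify the roots of the minimal polynomial of $\xi+f$ (resp.\ $f\xi$) in each $\mathcal S_\preceq^\K$ as the translates (resp.\ scalings) of the $\xi_i^\preceq$, read off the equalities for $\t_0$ and $\t_1$, and then deduce the equality for $\t$ via the closure formula from Corollary~\ref{tau_clos} and Lemma~\ref{lem_clos}. The only organizational difference is that the paper first treats $f\in\K[[x]]$ and then bootstraps to $f=g/h$ via the chain $\t_\bullet(\xi)=\t_\bullet(h\xi)=\t_\bullet(h\xi+g)=\t_\bullet(\xi+g/h)$, whereas you handle $\t_0,\t_1$ for general $f\in\K(\!(x)\!)^*$ in one stroke using $\K(\!(x)\!)\subset\mathcal S_\preceq^\K$; both routes then reduce the $\t$-statement for general $f$ to the $\K[[x]]$ case by clearing denominators.
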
\label{lem_inv}
\begin{proof}
The minimal polynomial of $\xi+f$ is $Q(T):=P(T-f)$. Thus, for a given $\preceq\in\Ord_n$, the roots of $Q(T)$ in $\mathcal S_\preceq^\K$ are $\xi_1^\preceq+f$, \ldots, $\xi_d^\preceq+f$. This shows that
$$\t_0(\xi+f)=\t_0(\xi),\ \t_1(\xi+f)=\t_1(\xi).$$
 Lemma \ref{lem_clos} and Corollary \ref{tau_clos} imply that
$\t(\xi+f)=\t(\xi).$\\
Now, the polynomial $R(T):=f^dP(T/f)$ vanishes at $f\xi$. On the other hand, if $\ovl R(T)$ is a polynomial with $\ovl R(f\xi)=0$, then $\ovl R(fT)$ is a polynomial vanishing at $\xi$. This shows that $P(T)$ divides $\ovl R(fT)$. Thus, the minimal polynomial of $f\xi$ has degree $d$ and divides $R(T)$, thus it is of the form $\frac{1}{g}R(T)=\frac{f^d}{g}P(T/f)$ for some $g\in\K[[x]]$, $g\neq 0$.\\
Therefore, for a given $\preceq\in\Ord_n$, the roots in $\mathcal S_\preceq^\K$ of the minimal polynomial of $f\xi$  are $f\xi_1^\preceq$, \ldots, $f\xi_d^\preceq$. This shows that 
$$\t_0(f\xi)\supset\t_0(\xi),\ \t_1(f\xi)\supset\t_1(\xi),\ \ovl{\t'_0(f\xi)}\supset\ovl{\t'_0(\xi)}.$$
This proves the corollary.
\end{proof}

\begin{ex}
Let $\xi=\sum_{i\geq 0}\left(\frac{x_1}{x_2}\right)^i=\frac{x_2}{x_2-x_1}$. Here $\t(\xi)$ is the cone generated by $(1,0)$ and $(1,1)$. But $\t((x_2-x_1)\xi)={\R_{\geq 0}}^n$. Therefore we do not have $\t(f\xi)=\t(\xi)$ in general.
\end{ex}

\begin{ex}\label{bad_ex}
We can see on a basic example that  $\t'_0(\xi+f)\neq \t'_0(\xi)$ in general: let $n=2$ and fix $\xi=\sum_{k\in\N}x_1^k$ and $f=1-\xi$. Then $\t'_0(\xi)=\R_{> 0}\times\R_{\geq 0}$ but $\t'_0(\xi+f)={\R_{\geq 0}}^2$. This also shows that $\t_0(\xi)\neq \t'_0(\xi)$ in general.
\end{ex}



\subsection{Proof of Theorem \ref{main_thm2}}
 First, by replacing each of the $x_i$ by some power of  $x_i$, we may assume that $\xi$ is a Laurent series.\\
Here we denote by $\s$ the face of $\t(\xi)^\vee$ defined by $u$. We set
\begin{equation}\label{hyper}H_u(t):=\{v\in\R^n\mid v\cdot u=t\},\ H_u(t)^+=\{v\in\R^n\mid v\cdot u\geq t\}.\end{equation}
The vector $u$ is in the boundary of $\t'_0(\xi)$  because $\t(\xi)=\ovl{\t'_0(\xi)}$ by Corollary \ref{tau_clos}. Hence  by Corollary \ref{cor1} we have $u\notin \t'_1(\xi)$. Thus, we have $u\in\t_0'(\xi)$ or $u\in{\R_{\geq 0}}^n\backslash (\t_0'(\xi)\cup\t'_1(\xi))$. Assume that $u\in\t'_0(\xi)$. By Proposition \ref{prop_open},  $\t'_0(\xi)\cap{\R_{>0}}^n$ is open. Thus, because $u$ is in the boundary of $\t'_0(\xi)$, we have $u\in{\R_{\geq 0}}^n\backslash {\R_{>0}}^n$, which contradicts the hypothesis. Therefore $u\notin \t'_0(\xi)$. Thus we use the following lemma whose proof is given below:

\begin{lem}\label{lem_faces}
Let $u\notin  \t'_0(\xi)\cup\t'_1(\xi)$. Then there exist a Laurent polynomial $p_\s(x)$  and a real number $t_\s$ such that
\begin{equation}\label{bound}\Supp(\xi+p_\s(x))\subset H_u(t_\s)^+ \text{ and } \#\left(\Supp(\xi+p_\s(x))\cap H_u(t_\s)\right)=+\infty.\end{equation}
\end{lem}

 Now we denote by $p(x)$ the sum of distinct monomials that appear in all the $\p_\s(x)$ (there is a finite number of faces $\s$), and Theorem \ref{main_thm2} is proved.

\begin{proof}[Proof of Lemma \ref{lem_faces}]
Because $u\notin \t'_0(\xi)\cup\t'_1(\xi)$, 
the following set is non empty and bounded from above :
$$E_\s:=\left\{ t\in\R\mid \# \left(\Supp (\xi )\cap \left\{  v\in\R^n\mid v\cdot u< t\right\}\right)<\infty\right\}.$$
Let us set $t_\s:=\sup E_\s.$ 
By Lemma \ref{values_support}, the set $\{v\cdot u\mid v\in\Supp(\xi)\}\cap]-\infty, t]$ is finite for every $t$ (here $u$ belongs to the closure of $\t(\xi)$). Thus, we may order the elements of $\{v\cdot u\mid v\in\Supp(\xi)\}$ as $t_0<t_1<\cdots$, and necessarily $t_\s$ is one of these elements. Therefore the set $\Supp (\xi )\cap \left\{  v\in\R^n\mid v\cdot u= t_\s\right\}$ is infinite and  $\Supp (\xi )\cap \left\{  v\in\R^n\mid v\cdot u< t_\s\right\}$ is finite.  So we denote by $-p_\s(x)$ the sum of the monomials of $\xi$ whose exponents belong to $\{v\in\R^n\mid v\cdot u<t_\s\}$
and \eqref{bound} is satisfied (that is, we remove from $\xi$ the monomials that are in $\left\{  v\in\R^n\mid v\cdot u< t_\s\right\}$). 
\end{proof}

\subsection{Proof of Theorem \ref{main_thm3}}
We begin by giving  a strengthened version of Lemma \ref{inter_cones} that we will need in the proof of Theorem  \ref{main_thm3}:

\begin{prop}[Dickson's Lemma]\label{int_cones}
Let $\s_1$, \ldots, $\s_k$ be convex rational cones such that $\s:=\bigcap_{j=1}^k\s_j$ is a full dimensional convex rational cone. Let $\g_1$, \ldots, $\g_k\in  \Z^n$. Then there exists a finite set $C\subset\Z^n$ such that
$$\bigcap_{j=1}^k(\g_j+\s_j)\cap\Z^n=C+\s\cap\Z^n.$$
\end{prop}

\begin{proof}
Up to a translation we may assume that $\g_j\in \s\cap\Z^n$ for every $j$ because $\s$ is full dimensional.
Let $u_1$, \ldots, $u_s$ be vectors with  integer coordinates generating $\s\cap\Z^n$. Then the ring $R_\s$ of polynomials in $x_1$, \ldots, $x_n$ with support in $\s\cap\Z^n$ is isomorphic to  $\K[U_1,\ldots, U_s]/I$ for some binomial ideal $I$. This is well known and this can be described as follows (for instance see \cite[Proposition 1.1.9]{CLS} for details):\\
 for any linear relation $L:=\{\sum_{i=1}^s \la_iu_i=0\}$ with $\la_i\in\Z$ we consider the binomial 
$$B_L:=\prod_{i\mid\la_i\geq 0} U_i^{\la_i}-\prod_{i\mid \la_i< 0}U_i^{-\la_i}.$$ Then $I$ is the ideal generated by the $B_L$ for $L$ running over the $\Z$-linear relations between the $u_i$. Moreover, for $\g\in\s\cap\Z^n$, the isomorphism $R_\s\lgw\K[U]/I$  sends $x^\g$ onto $U^{\a_\g}$ where $\a_\g\in\Z^s_{\geq 0}$ is defined by $\g=\sum_{i=1}^s\a_{\g,i}u_i$.\\
Because the $\g_j$ belong to $\s$, we have
$$\bigcap_{j=1}^k(\g_j+\s_j)\subset \bigcap_{j=1}^k\s_j=\s.$$
Therefore the set of monomials $x^u$ for $u\in \bigcap_{j=1}^k(\g_j+\s_j)\cap\Z^n$, is equal to the set of monomials of a monomial ideal of $R_\s$. By Gordan's Lemma, this ideal is generated by a finite number of monomials. If $C$ denotes the set of exponents of these generators, we have
$\bigcap_{j=1}^k(\g_j+\s_j)\cap\Z^n=C+\s\cap\Z^n.$
\end{proof}

\begin{proof}[Proof of Theorem \ref{main_thm3}]
As in the proof of Theorem \ref{main_thm2} we may replace each of the $x_i$ by some power of  $x_i$, and assume that $\xi$ is a Laurent series.\\
By \cite[Proposition 1.3]{Oda}, because $\t(\xi)^\vee$ is a strongly convex rational cone, for each nonzero face $\s\subset \t(\xi)^\vee$, there is a vector $u_\s$ in the boundary of $\t(\xi)$ such that
$$\s=\lg u_\s\rg^\perp\cap \t(\xi)^\vee.$$
In fact, as seen in the proof of \cite[Proposition 1.3]{Oda}, we can freely choose $u_\s$ in the relative interior of $\s^\perp\cap\t(\xi)$, where $\s^\perp\cap\t(\xi)$ is a face of dimension $n-\dim(\s)$ of $\t(\xi)$. Thus, when $\s$ is a facet of  $\t(\xi)^\vee$, $\s^\perp\cap\t(\xi)$ is a half-line that is generated by one vector with integer coordinates. Therefore, when $\s$ is a facet of $\t(\xi)^\vee$, we can choose $u_\s\in\Z^n$. \\
From now on, $\s$ will always denote a facet of $\t(\xi)^\vee$.
We have
$$\t(\xi)^\vee=\bigcap_{\s\text{ facet of } \t(\xi)^\vee}H_{u_\s}(0)^+$$
where the $H_{u_\s}(t)^+$ are defined in \eqref{hyper}.
The vectors $u_\s$ are in the boundary of $\t'_0(\xi)$  because $\t(\xi)=\ovl{\t'_0(\xi)}$ by Corollary \ref{tau_clos}. Hence  by Corollary \ref{cor1} we have $u_\s\notin \t'_1(\xi)$ for any facet $\s$. Thus for every facet $\s$ of $\t(\xi)^\vee$ we have $u_\s\in\t_0'(\xi)$ or $u_\s\in{\R_{\geq 0}}^n\backslash (\t_0'(\xi)\cup\t'_1(\xi))$. We will reduce to the situation where none of the $u_\s$ are in $\t_0'(\xi)$:\\
Let $\s$ be a facet of $\t(\xi)^\vee$ for which $u_\s\in\t'_0(\xi)$. By Proposition \ref{prop_open},  $\t'_0(\xi)\cap{\R_{>0}}^n$ is open. Thus, because $u_\s$ is in the boundary of $\t'_0(\xi)$, we have $u_\s\in{\R_{\geq 0}}^n\backslash {\R_{>0}}^n$. In particular  at least one of the coordinates of $u_\s$  is zero,   hence $\lg u_\s\rg^\perp$ contains at least one line generated by one vector with integer coordinates. Therefore there exists $f_\s(x)\in\K[[x]]$ with support  in $\lg u_\s\rg^\perp\cap {\R_{\geq 0}}^n$ and such that 
$$\#\left\{\Supp(\xi+f_\s(x))\cap \lg u_\s\rg^\perp\cap {\R_{\geq 0}}^n\right\}=+\infty.$$
Moreover we can do this simultaneously for every facet $\s$ of $\t(\xi)^\vee$ such that $u_\s\in\t'_0(\xi)$, hence there exists $f(x)\in\K[[x]]$ such that for every such facet $\s$:
\begin{equation}\label{eq_t}\#\left\{\Supp(\xi+f(x))\cap \lg u_\s\rg^\perp\cap {\R_{\geq 0}}^n\right\}=+\infty.\end{equation}
By Corollary \ref{cone_inv} $\t(\xi)=\t(\xi+f(x))$. But $u_\s\notin\t_0'(\xi+f(x))$ by \eqref{eq_t}. Therefore, we  replace $\xi$ with $\xi+f(x)$. This does not change $\t(\xi)$, but this allows us to assume that  $u_\s\in{\R_{\geq 0}}^n\backslash (\t_0'(\xi)\cup\t'_1(\xi))$. Therefore we may assume that none of the $u_\s$ is in $\t_0'(\xi)$.\\
\\
Then we apply Lemma \ref{lem_faces} to see, as in the proof of Theorem \ref{main_thm2}, that
modulo a finite number of monomials and a formal power series $f(x)\in\K[[x]]$, the support of $\xi$ is included in $\displaystyle\bigcap_{\s\text{ facet of }\t(\xi)^\vee}H_{u_\s}(t_\s)^+\cap\Z^n$. Moreover  each $H_{u_\s}(t_\s)$ contains infinitely many monomials of $\xi$, i.e there is a Laurent polynomial $p(x)$ such that
$$\Supp(\xi+f(x)+p(x))\subset \bigcap_{\s\text{ facet of }\t(\xi)^\vee}H_{u_\s}(t_\s)^+\cap\Z^n$$
$$\text{ and } \#\left(\Supp(\xi+f(x)+p(x))\cap H_{u_\s}(t_\s)\right)=+\infty\ \ \ \forall \s.$$
 For every $\s$  facet of $\t(\xi)^\vee$ we have $H_{u_\s}(t_\s)^+=\g_\s+H_{u_\s}(0)^+$ for  any $\g_\s\in H_\s(t_\s)$. But, since $\displaystyle H_{u_\s}(t_\s)\cap\Z^n\neq\emptyset$, we may fix  $\g_\s\in\Z^n$. Since $u_\s\in\Z^n$, the cone $H_{u_\s}^+$ is rational. Thus, by Corollary \ref{int_cones} there is a finite set $C\subset \Z^n$ such
 that $$\bigcap_{\s\text{  facet  of }\t(\xi)^\vee}H_{u_\s}(t_\s)^+\cap\Z^n=C+\bigcap_{\s\text{  facet  of }\t(\xi)^\vee}H_{u_\s}(0)^+\cap \Z^n=C+\t(\xi)^\vee\cap \Z^n.$$ 
Because the sum of two convex sets is a convex set, we have 
$$\Conv(C+\t(\xi)^\vee)=\Conv(C)+\t(\xi)^\vee$$
is an unbounded convex polytope. 
Moreover each unbounded facet of $\Conv(C+\t(\xi)^\vee)$ is the intersection of $\Conv(C+\t(\xi)^\vee)$ with one $H_{u_\s}$ for some facet $\s$ of $\t(\xi)^\vee$. Therefore every unbounded facet of $\Conv(C+\t(\xi)^\vee)$ contains 
 infinitely many elements of $\Supp(\xi+f(x)+p(x))$. 
\end{proof}


\section{Some examples}

\begin{ex}\label{ex1}
Let $E:=\{(x,y)\in\R_{\geq 0}\times \R\mid y\geq-x-\sqrt{x}\}$ and let $\xi$ be a Laurent series whose support is $\Z^2\cap E$ as follows: \tdplotsetmaincoords{60}{120} 
\begin{center}\begin{figure}[H]\fbox{\begin{tikzpicture} [scale=1.1, axis/.style={->,thin}, 
vector/.style={-stealth,red,very thin}, 
vector guide/.style={dashed,red,thick}]

\coordinate (O) at (0,0);

\pgfmathsetmacro{\ax}{0.8}
\pgfmathsetmacro{\ay}{0.8}

\coordinate (P) at (\ax,\ay);

\draw[axis] (0,0) -- (2.3,0) node[anchor= west]{$x$};
\draw[axis] (0,0,0) -- (0,1.3) node[anchor=north west]{$y$};

\draw[domain=0:1,smooth,variable=\x] plot (\x,{-\x-sqrt(\x)});
 
  \fill [color=gray, opacity=0.1,domain=0:1,smooth,variable=\x]
plot (\x,{-\x-sqrt(\x)})-- (2,{-1-sqrt(1)}) -- (2,1) -- (0,1) -- cycle;
\draw[axis] (0,0) -- (2.3,0) node[anchor= west]{$x$};
\end{tikzpicture}}\caption{Example \ref{ex1}}\label{fig_ex1}\end{figure}\end{center}
Here $\t(\xi)$ is the cone generated by $(1,0)$ and $(1,1)$, but $\Supp(\xi)\subset\s$ where $\s$ is the cone $\{(x,y)\in\R_{\geq 0}\times \R\mid y>-x\}$. Since $\s\subsetneq \t(\xi)^\vee$, $\xi$ is not algebraic over $\K(\!(x,y)\!)$ by Corollary \ref{cor_main}.\\
Moreover $\t'_1(\xi)$ is equal to the rational cone generated by $(1,0)$ and $(1,1)$ minus the origin. So $\t'_1(\xi)$ is not open. 
In this case  ${\R_{\geq 0}}^n\backslash\{\underline 0\}=\t'_0(\xi)\cup\t'_1(\xi)$.
\end{ex}


\begin{ex}\label{ex_2}
We consider the set
$$E:=\{(x,y)\in\R_{\geq 0}\times \R\mid y\geq \ln(x+1)\}.$$ We rotate it by an angle of $-\pi/4$ and denote  this set by $\G$.
We denote  a Laurent series whose support is $\G\cap\Z^2$ by $\xi$ (see Figure \ref{fig_ex_2}).

\tdplotsetmaincoords{60}{120} 
\begin{center}\begin{figure}[H]\fbox{\begin{tikzpicture} [scale=1.1, axis/.style={->,thin}, 
vector/.style={-stealth,red,very thin}, 
vector guide/.style={dashed,red,thick}]

\coordinate (O) at (0,0);

\pgfmathsetmacro{\ax}{0.8}
\pgfmathsetmacro{\ay}{0.8}

\coordinate (P) at (\ax,\ay);

\draw[axis] (0,0) -- (2.3,0) node[anchor= west]{$x$};
\draw[axis] (0,0,0) -- (0,1.3) node[anchor=north west]{$y$};

 \draw plot [smooth] coordinates {(0,0) (0.2,-0.03) (0.4,-0.065) (0.6,-0.115) (0.8,-0.18) (1,-0.26) (1.2,-0.36) (1.4,-0.48) (1.7, -0.67) (2,-0.9)};
 
 \fill [color=gray, opacity=0.1]
(0,0) -- (2,2)
-- plot [smooth] coordinates {(0,0) (0.2,-0.03) (0.4,-0.065) (0.6,-0.115) (0.8,-0.18) (1,-0.26) (1.2,-0.36) (1.4,-0.48) (1.7, -0.67) (2,-0.9)}
-- (2.2,-0.9)-- (2.2,1) -- (1,1) -- cycle;
\draw[axis] (0,0) -- (2.3,0) node[anchor= west]{$x$};

\end{tikzpicture}}\caption{Example \ref{ex_2}}\label{fig_ex_2}\end{figure}\end{center}
Then $\t(\xi)^\vee$ is the cone generated by $(1,-1)$ and $(0,1)$, so it is rational, but $\xi$ is not algebraic as Theorem \ref{main_thm3}  is not satisfied.\\
Moreover $\t(\xi)$ is generated by $(1,0)$ and $(1,1)$. Thus the vector $(1,1)$ is in the boundary of $\t(\xi)$ but here $(1,1)\in\t'_0(\xi)$. Thus $\t'_0(\xi)$ is closed.
\end{ex}


\begin{ex}\label{ex_min}
Let $\s$ be the cone generated by the vectors $(1,0)$, $(0,1)$ and $(1,-1)$. Then the series $\xi:=\sum_{k=0}^\infty (xy^{-1})^k$ has support in $\s$ and it is straightforward to see that $\s=\t(\xi)^\vee$. Let $N\in\Z^*$ and set $p_N(x,y):=\sum_{k=0}^{N-1}(xy^{-1})^k$ (when $N> 0$) or $p_N(x,y)=\sum_{k=N}^0(xy^{-1})^k$ (when $N<0$). Let $C_N$ denote the point $(N,-N)$. Then, we have 
$$C_N\in\Supp(\xi-p_N(x,y))\subset C_N+\s.$$
This shows that there is no canonical choice for $C_N$ in Theorem \ref{main_thm3}, neither a minimal or maximal $C_N$.
\end{ex}
\begin{ex}\label{ex_C}
Let $C$ be the set $\{(1,0,0), (0,1,0), (0,0,1)\}$, and let $\s$ be the cone generated by the vectors  $(1,-1,1)$, $(-1,1,1)$,  and $(1,1,-1)$.
We can construct a Laurent series $\xi$, algebraic over $\K[[x,y,z]]$, with support in $\Conv(C)+\s$, such that all the unbounded faces of $\Conv(C)+\s$ contain infinitely many monomials of $\xi$ as follows:\\
We fix an algebraic series $G(T)$ not in $\K(T)$. We remark that, for $a$, $b$, $c\in\Z$, the series $G(x^ay^bz^z)$ is algebraic over $\K(x,y,z)$, and it is a formal sum of monomials of the form $x^{ka}y^{kb}z^{kc}$ with $k\in\N$. Thus its support is included in the half line generated by the vector $(a,b,c)$.\\
Then we set 
$$\xi=G(x)+G(y)+zG(z)+zG\left(\frac{xz}{y}\right)+zG\left(\frac{yz}{x}\right)+(x+y)G\left(\frac{xy}{z}\right).$$
Then $\xi$ is algebraic over $\K(\!(x,y,z)\!)$, its support is $\Conv(C)+\s$ and all the unbounded faces of $\Conv(C)+\s$ contain infinitely many monomials of $\xi$ (see Figure \ref{fig_ex_C}).  Therefore $\t(\xi)^\vee=\s$. Moreover we can  see that there is no $\g\in\R^n$ such that $\Supp(\xi)\subset \g+\s$ and every face of $\g+\s$ contains infinitely many monomials of $\xi$, even after removing monomials of $\xi$ belonging to ${\R_{\geq 0}}^3$. Indeed, if it were the case, the four unbounded edges of $\Conv(C)+\s$ that are not included in ${\R_{\geq 0}}^3$ would intersect at one point and this is clearly not the case. Thus we cannot assume that the finite set $C$ of Theorem \ref{main_thm3}  is a single point.

\tdplotsetmaincoords{60}{120} 
\begin{center}\begin{figure}[H]\fbox{\begin{tikzpicture} [scale=1, tdplot_main_coords, axis/.style={->,thin}, 
vector/.style={-stealth,red,very thin}, 
vector guide/.style={dashed,red,thick}]

\coordinate (O) at (0,0,0);

\pgfmathsetmacro{\ax}{0.8}
\pgfmathsetmacro{\ay}{0.8}
\pgfmathsetmacro{\az}{0.8}

\coordinate (P) at (\ax,\ay,\az);

\draw[axis] (0,0,0) -- (2.3,0,0) node[anchor= east]{$x$};
\draw[axis] (0,0,0) -- (0,2.3,0) node[anchor=north west]{$y$};
\draw[axis] (0,0,0) -- (0,0,2.3) node[anchor=south]{$z$};

\draw[thick] (1,0,0) -- (0,0,1);
\draw[thick] (1,0,0) -- (2,1,-1);
\draw[thick] (0,1,0) -- (1,2,-1);
\draw[thick]  (0,1,0) -- (0,0,1);
\draw[thick]  (1,0,0) -- (0,1,0);
\draw[thick] (1,0,0) --(2,0,0);
\draw[thick] (0,1,0) --(0,2,0);
\draw[thick] (0,0,1) -- (1,-1,2);
\draw[thick] (0,0,1) -- (-1,1,2);
\draw[thick](0,0,1)-- (0,0,2);

\end{tikzpicture}}\caption{Example \ref{ex_C}}\label{fig_ex_C}\end{figure}\end{center}

\end{ex}

\section{The positive characteristic case}
 In positive characteristic, unlike the characteristic zero case, we cannot express roots of polynomials as Puiseux series with support in rational strongly convex cones. This already appears in the univariate case, since it has been noticed by Chevalley \cite{Ch} that none of the roots of the polynomial $T^p-x_1^{p-1}T-x_1^{p-1}$ can be expressed as Puiseux series, when $p>0$ denotes the characteristic of the base field. This shows that the  Newton-Puiseux Theorem is no more valid in positive characteristic.   Then Abhyankar noticed that for such a polynomial, the roots can be expressed as generalized series with support in $\Q$ with the additional property that their support is well-ordered \cite{Ab}. Here such a root can be written as
$\displaystyle \sum_{k\in\N^*} x_1^{1-\frac{1}{p^k}}.$
The determination of the algebraic closure of $\K(\!(x_1)\!)$ for $n=1$, when $\K$ is a positive characteristic field, was finally achieved very recently (see \cite{K1}, \cite{K2}).\\
For $n\geq 2$, this problem has recently been investigated by Saavedra \cite{S}.
He generalized Macdonald's Theorem to the positive characteristic case as follows:

\begin{thm}\cite[Theorem 5.3]{S}\label{saavedra}
Let $\K$ be an algebraically closed field of characteristic $p> 0$. Let $\o\in{\R_{>0}}^n$ be a vector whose coordinates are $\Q$-linearly independent. The set
\begin{equation*}\begin{split}\mathcal{S}^\K_\o =\left\{  \vphantom{\frac{1}{kp^l}}\xi \text{ series } \mid  \exists k\in \N^*, \gamma\in\Z^n,  \sigma  \text{ a }\right.& \leq_\o\text{-positive rational cone, } \\
 \Supp  (\xi )\subset (\gamma +\sigma )\cap &\left.\frac{1}{k}\GG  \text{ and } \Supp(\xi) \text{ is }\leq_\o\text{-well-ordered}\right\}, \end{split}\end{equation*}
where 
$$\GG=
\bigcup_{\ell\in\N}\frac{1}{p^\ell}\Z^n ,$$
 is an algebraically closed field.
\end{thm}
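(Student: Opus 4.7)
The plan is to proceed in three stages: establish the ring and field structure of $\mathcal{S}^\K_\o$, endow it with a Henselian valuation, and conclude algebraic closure via a Kaplansky-style argument. For the ring structure, given $\xi_1,\xi_2 \in \mathcal{S}^\K_\o$ with data $(k_i,\g_i,\s_i)$, I would take $k := k_1 k_2$ and the $\leq_\o$-positive rational cone $\s := \s_1+\s_2$. The support of $\xi_1+\xi_2$ lies in $((\g_1+\s_1)\cup(\g_2+\s_2))\cap \bigcup_l\frac{1}{kp^l}\Z^n$ and that of $\xi_1\xi_2$ in $(\g_1+\g_2 + \s)\cap \bigcup_l\frac{1}{kp^l}\Z^n$; after a translation (Lemma \ref{inter_cones} applied to $\s_1,\s_2$ furnishes a common $\g$), both fit inside a single set of the form $(\g+\s)\cap \bigcup_l\frac{1}{kp^l}\Z^n$. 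Classical properties of $\leq_\o$-well-ordered subsets of $\R^n$---closure under finite union and Minkowski sum, with finitely many decompositions of each sum element---give the required well-orderedness. For inverses, factoring a nonzero $\xi$ as $\xi_{\a_0}x^{\a_0}(1+\eta)$ with $\a_0 = \min_{\leq_\o}\Supp(\xi)$ and $\Supp(\eta)$ strictly $\leq_\o$-positive, the geometric series $(1+\eta)^{-1} = \sum_{j\geq 0}(-\eta)^j$ has support in the $\leq_\o$-well-ordered sub-semigroup generated by $\Supp(\eta)$.

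Next I would equip $\mathcal{S}^\K_\o$ with the valuation $v(\xi) := \o \cdot \min_{\leq_\o}\Supp(\xi)$; this is well-defined and multiplicative because the $\Q$-linear independence of $\o_1,\ldots,\o_n$ makes $\o\cdot(-)$ injective on $\bigcup_{k,l}\frac{1}{kp^l}\Z^n$. The value group $\Gamma := \bigcup_{k,l}\frac{1}{kp^l}(\Z\o_1 + \cdots + \Z\o_n) \subset \R$ is divisible (by any prime $q \neq p$ by enlarging $k$, and by $p$ by enlarging $l$), and the residue field is $\K$, algebraically closed by hypothesis. Henselianity follows from a Newton-style iteration: a simple residue root is lifted through successive approximations whose leading valuations are strictly $\leq_\o$-increasing, and whose union of supports is $\leq_\o$-well-ordered and remains in a fixed translate of a common $\leq_\o$-positive cone.

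To conclude algebraic closure I would invoke the Kaplansky criterion: a maximally complete Henselian valued field with algebraically closed residue field and divisible value group is algebraically closed, provided every Artin--Schreier equation $T^p - T = a$ has a solution. Maximal completeness reduces to showing that any pseudo-Cauchy sequence $(\xi_j)$ admits a pseudo-limit; by extracting common data $(k,\g,\s)$ for a tail (via Lemma \ref{inter_cones}) and forming the formal series whose support is the $\leq_\o$-well-ordered union of the supports of the successive differences $\xi_{j+1}-\xi_j$, one obtains such a limit in $\mathcal{S}^\K_\o$. For Artin--Schreier, one reduces to $v(a) \neq 0$ by translating by a residue root, and then the formal solution $\sum_{l \geq 1} a^{1/p^l}$ of $T^p - T = a$ (in the case $v(a) < 0$) lies in $\mathcal{S}^\K_\o$: each iterated $p$-th root $a^{1/p^l}$ only enlarges the $p^l$-denominator by one, so the support stays in $\bigcup_l \frac{1}{kp^l}\Z^n$, and the strict increase of the valuations $v(a)/p^l$ toward $0$ guarantees well-orderedness of the union of supports.

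The main obstacle is this Artin--Schreier step together with the accompanying verification of maximal completeness---the genuinely new features compared with the characteristic-zero case of Theorem \ref{algclo}. The definition of $\mathcal{S}^\K_\o$ is engineered precisely so that iterated $p$-th roots stay inside: without the union over $l$ in $\bigcup_l \frac{1}{kp^l}\Z^n$, Artin--Schreier roots and hence algebraic closure would be lost. The $\Q$-linear independence of $\o$ is used throughout, as it ensures that the order $\leq_\o$ on $\bigcup_{k,l}\frac{1}{kp^l}\Z^n$ is totally matched by the single-variable order on $\R$ via $\o\cdot(-)$, so that classical valuation-theoretic arguments can be transposed unambiguously to the exponent structure of supports.
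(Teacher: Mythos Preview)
This theorem is quoted from \cite{S} and not proved in the paper; but the paper does prove the generalization Theorem~\ref{thm_alg_clos_p}, whose argument specializes to this case, so I compare against that.

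Your overall architecture is close to the paper's, but two steps have genuine gaps.

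\emph{Maximal completeness.} Your appeal to maximal completeness is both unnecessary and false. Take $n=1$, $\o=1$, and $\xi_j=\sum_{i=1}^j x^{(i-1)+1/q_i}$ with $q_i$ the $i$-th prime $\neq p$: this is pseudo-Cauchy, yet any pseudo-limit must contain every exponent $(i-1)+1/q_i$ in its support, hence cannot lie in $\bigcup_l\frac{1}{kp^l}\Z$ for a single $k$. Your claim that a tail admits common data $(k,\g,\s)$ fails because nothing bounds $k$. Fortunately maximal completeness is not needed: the paper invokes \cite[Lemma~4]{ra}, which says that a Henselian valued field of characteristic $p$ with algebraically closed residue field and $p$-divisible value group is algebraically closed provided every Artin--Schreier equation $T^p-T=a$ is solvable. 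So you should simply drop the pseudo-limit argument.

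\emph{Artin--Schreier.} Reducing to $v(a)\neq0$ is fine, and for $v(a)>0$ Hensel applies. But for $v(a)<0$ your formula $\sum_{l\geq1}a^{1/p^l}$ need not lie in $\mathcal S^\K_\o$: the element $a$ may still have terms $a_qx^q$ with $\o\cdot q>0$, and then $\{q/p^l:l\geq1\}$ is a strictly $\leq_\o$-decreasing sequence in the support of your candidate, destroying well-orderedness. The fix, used in the paper's proof of Theorem~\ref{thm_alg_clos_p} (and in \cite{S}), is to split $a=a^++a^-$ with $\Supp(a^-)\subset\{q:q\prec\underline0\}$ and $\Supp(a^+)\subset\{q:q\succeq\underline0\}$, handle $a^+$ by Hensel, and apply the explicit formula only to $a^-$; then each inner sum $\sum_{i}(a^-_{p^iq})^{1/p^i}$ is finite because $(p^iq)_i$ is strictly $\leq_\o$-decreasing in the well-ordered set $\Supp(a^-)$.

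As a secondary remark, the paper packages your entire first stage (ring, field, Henselian structure) into Rayner's field-family machinery (Definition~\ref{ff}, Theorem~\ref{rayner}, Proposition~\ref{p_ff}); this replaces your explicit Newton iteration and makes the well-orderedness verifications for products, inverses, and semigroups systematic rather than ad hoc.
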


We give here a positive characteristic version of $\mathcal S_\preceq^\K$:

\begin{defi}\label{def_field}
We fix an order $\preceq\in\Ord_n$ and a field $\K$ of characteristic $p> 0$. We set
\begin{equation*}\begin{split}\mathcal{S}^\K_{\preceq} :=\left\{  \vphantom{\frac{1}{kp^l}}\xi\text{ series }\mid  \exists k\in \mathbb{N}^{\ast},  \gamma\in\Z^n,  \sigma \text{ a } \preceq\text{-positive} \right.&\text{ rational cone containing } {\R_{\geq 0}}^n, \\
\text{such that }\Supp  (\xi )\subset (\gamma +\sigma )\cap \frac{1}{k}\GG,  \text{ and } &\left.  \vphantom{\frac{1}{kp^l}} 
 \Supp(\xi) \text{ is }\preceq \text{-well-ordered } \vphantom{\frac{1}{kp^l}} \right\}. \end{split}\end{equation*}
\end{defi}

Then the following result, extending Theorem \ref{saavedra} is the positive characteristic analogue of Theorem \ref{thm_alg_clos}:

\begin{thm}\label{thm_alg_clos_p}
Let $\preceq\in \Ord_n$ and $\K$ be an algebraically closed field of positive characteristic. Then the set $\mathcal S^\K_\preceq$ is an algebraically closed field containing $\K(\!(x)\!)$.
\end{thm}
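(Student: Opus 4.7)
The plan is to mirror the structure of the proof of Theorem \ref{algclo} from \cite{AR}, which is based on Rayner's notion of a field family (a combinatorial condition on subsets of an ordered abelian group guaranteeing that generalized series with support in such a family form an algebraically closed, or at least Henselian, valued field). Two extra difficulties arise in positive characteristic: first, Rayner's construction directly yields Henselian but not algebraically closed fields, so we must account for purely inseparable extensions; second, the supports are no longer forced to lie in a single lattice, so the well-ordering condition (imposed uniformly across all refining orders in $\mathcal U_\sigma$) must be shown to be stable under the field operations and algebraic extensions.

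First I would verify that $\mathcal S^\K_\preceq$ is a subfield of the field of generalized series containing $\K(\!(x)\!)$. Any $f\in\K[[x]]$ has support in $\Z^n\cap{\R_{\geq0}}^n$, which is $\preceq'$-well-ordered for every $\preceq'\in\Ord_n$ (this is the classical Dickson's Lemma), so $\K[[x]]\subset\mathcal S^\K_\preceq$; inversion of units to reach $\K(\!(x)\!)$ is handled by the geometric series. Closure under addition reduces, via Lemma \ref{inter_cones}, to enclosing two admissible supports in a single translated rational cone $\gamma+\sigma$, and then to the fact that a finite union of $\preceq'$-well-ordered sets is $\preceq'$-well-ordered. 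For multiplication, the Cauchy product is well defined because, for any $\preceq'\in\Ord_n\cap\mathcal U_\sigma$, the intersection of a $\preceq'$-well-ordered set in a $\preceq'$-positive cone with a level set is finite (by an argument in the spirit of Lemma \ref{values_support}). Inverses of elements of $\mathcal S^\K_\preceq$ are obtained by factoring out the $\preceq'$-least monomial and expanding a geometric series in the remainder, whose support lives in a strictly $\preceq'$-positive cone.

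For algebraic closure I would proceed in two stages. \emph{Stage one (separable closure via Rayner):} package the admissible supports as a Rayner field family $\mathcal F$ over the value group $G:=\bigcup_{l}\frac{1}{kp^l}\Z^n$ — explicitly, $S\in\mathcal F$ iff $S\subset(\gamma+\sigma)\cap G$ for some $\gamma\in\Z^n$ and some $\preceq$-positive rational $\sigma$, and $S$ is $\preceq'$-well-ordered for all $\preceq'\in\Ord_n\cap\mathcal U_\sigma$. The Rayner axioms (closure under finite unions, under translation, under sums of two elements, and the descending chain condition) follow from the lattice geometry of rational cones together with the uniform well-ordering. By Rayner's theorem the corresponding series field $\mathcal S^\K_\preceq$ is Henselian with residue field $\K$ (algebraically closed) and divisible value group, hence every monic separable polynomial over $\mathcal S^\K_\preceq$ splits there (the standard Hensel/Newton-polygon reduction to the residue field). \emph{Stage two (inseparable closure via $p$-th roots):} if $\eta\in\mathcal S^\K_\preceq$, the unique $p$-th root $\xi$ with $\xi^p=\eta$ is given by $\xi=\sum \eta_\alpha^{1/p}x^{\alpha/p}$ (Frobenius is additive, and $\K$ is perfect), so $\Supp(\xi)=\tfrac1p\Supp(\eta)\subset\tfrac{\gamma}{p}+\sigma$; moreover $\tfrac1p\bigcup_l\frac{1}{kp^l}\Z^n=\bigcup_l\frac{1}{kp^l}\Z^n$, and multiplication by $1/p$ preserves $\preceq'$-well-ordering. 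Thus $\mathcal S^\K_\preceq$ is closed under $p$-th roots, and combining this with the separable closure from stage one yields full algebraic closure, since every algebraic element over a field of characteristic $p$ has some $p^l$-th power that is separable.

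The main obstacle will be checking, in stage one, that the Rayner-style field family axioms are compatible with the \emph{uniform} well-ordering condition over all $\preceq'\in\Ord_n\cap\mathcal U_\sigma$ (rather than a single fixed order as in Saavedra's Theorem \ref{saavedra}). In particular, one must show that a sum $S_1+S_2$ of two such sets, after possibly enlarging the cone $\sigma$ to a common $\preceq$-positive cone, remains $\preceq'$-well-ordered for every $\preceq'$ positive on the enlarged cone; this is needed because the value of the Cauchy product at a given monomial involves partitioning exponents in a way that uses the well-ordering simultaneously for every such $\preceq'$. Once this uniform stability is in hand, the rest of the argument is a direct transcription of the characteristic-zero proof, augmented by the essentially one-line verification that $p$-th roots stay inside $\mathcal S^\K_\preceq$.
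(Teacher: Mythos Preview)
Your Stage One contains a genuine error: the implication ``Henselian $+$ algebraically closed residue field $+$ divisible value group $\Longrightarrow$ separably closed'' is \emph{false} in positive characteristic. The standard counterexample is the Puiseux series field $\bigcup_m\K(\!(t^{1/m})\!)$ over an algebraically closed $\K$ of characteristic $p$: it is Henselian with residue field $\K$ and value group $\Q$, yet the separable polynomial $T^p-T-t^{-1}$ has no root there, since any root is forced to be $\sum_{k\geq1}t^{-1/p^k}$, whose exponents have unbounded $p$-power denominators. What fails in the Newton--Hensel reduction is precisely that Artin--Schreier equations with negative-valuation constant term produce infinite, non-terminating descent; Henselianity only controls the \emph{tame} part. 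So you have the difficulty backwards: $p$-th roots (your Stage Two) are the trivial part, and it is the \emph{separable} Artin--Schreier extensions that require real work.

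The paper's proof addresses exactly this point. After establishing the field-family property (Proposition~\ref{p_ff}) and hence Henselianity via Rayner's Theorem~\ref{rayner}, it invokes \cite[Lemma~4]{ra}: if such a field fails to be algebraically closed, some $T^p-T-a$ is irreducible. It then splits $a=a^++a^-$ according to the sign of the support, solves $T^p-T-a^+$ by Hensel's Lemma in the valuation ring, and solves $T^p-T-a^-$ by the explicit formula $\xi^-=\sum_q\bigl(\sum_{i\geq1}(a^-_{p^iq})^{1/p^i}\bigr)x^q$, checking that the inner sum is finite (since $q\prec 0$ forces $p^iq\to-\infty$ and the support is well-ordered) and that $\Supp(\xi^-)$ satisfies the cone and uniform well-ordering conditions. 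Additivity of Frobenius then gives a root of $T^p-T-a$. Your outline can be repaired by replacing Stage One's false general principle with this explicit Artin--Schreier argument; note also that the verification of field-family axiom (6) (closure of the semigroup generated by a positive set) is more delicate than you suggest and in the paper is done by induction on $n$, distinguishing the cases $\gamma\succeq0$ and $\gamma\prec0$.
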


In order to prove this theorem we will use the notion of field-family introduced by Rayner:

\begin{defi}\label{ff}\cite{ra}
A family $\mathcal{F}$ of subsets of an ordered abelian group $(G,\preceq)$ is said to be a field-family with respect to $G$ if we have the following.
\begin{enumerate}
\item  Every element of $\mathcal{F}$ is a well-ordered subset of $G$.
\item The elements of the members of $\mathcal{F}$ generate $G$ as an abelian group.
\item $\forall (A,B)\in \mathcal{F}^2, A \cup B \in \mathcal{F}$.
\item $\forall A \in \mathcal{F}$ and $B\subset A, B\in \mathcal{F}$.
\item $\forall (A,\gamma) \in \mathcal{F} \times G$, $\gamma +A \in\mathcal{F}$.
\item $\forall A \in \mathcal{F}$, if $A$ is $\preceq$-positive,  the semigroup generated by $A$  belongs to $\mathcal{F}$.
\end{enumerate}
\end{defi}

\begin{thm}\cite[Theorem 2]{ra}\label{rayner}
If $\mathcal F$ is a field-family with respect to $G$ then the set 
$$\left\{\sum_{g\in G}a_gx^g\mid \{g\mid a_g\neq0\}\in \mathcal F\right\}$$ is a Henselian valued field.
\end{thm}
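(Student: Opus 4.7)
The plan is to verify the four features of a Henselian valued field --- ring, field, valuation, and Hensel's lemma --- in sequence, invoking the axioms (1)--(6) of a field-family at each closure step to guarantee that every newly produced series still has support in $\mathcal{F}$. Throughout, write $R := \bigl\{\sum_{g\in G} a_g x^g \mid \{g : a_g \neq 0\} \in \mathcal{F}\bigr\}$.

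For the ring structure, closure under addition is immediate from axiom~(3). For multiplication, given $\xi_1, \xi_2 \in R$ with supports $A_i \in \mathcal{F}$, set $g_i := \min_\preceq A_i$ (which exists by (1)) and $B_i := A_i - g_i \succeq 0$, still in $\mathcal{F}$ by (5). Then $A_1 + A_2$ is contained in $(g_1 + g_2) + \Sigma$, where $\Sigma$ is the semigroup generated by $B_1 \cup B_2$; this lies in $\mathcal{F}$ by applying~(3),~(6),~(5),~(4) in sequence. Well-definedness (finitely many pairs summing to any given $g$) is the classical consequence of well-orderedness of $A_1$ and $A_2$. For the field structure, given nonzero $\xi \in R$ I would factor $\xi = a_{g_0} x^{g_0}(1+\eta)$ with $g_0 := \min_\preceq \Supp(\xi)$ and $\Supp(\eta) \succ 0$ in $\mathcal{F}$; then $\xi^{-1} = a_{g_0}^{-1} x^{-g_0}\sum_{k\geq 0}(-\eta)^k$ has support in the semigroup generated by $\Supp(\eta)$, which is in $\mathcal{F}$ by (6), and the series is well-defined thanks to well-orderedness of this semigroup.

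The valuation $v(\xi) := \min_\preceq \Supp(\xi)$ (with $v(0) := +\infty$) then satisfies $v(\xi\eta) = v(\xi) + v(\eta)$ and $v(\xi + \eta) \succeq \min(v(\xi), v(\eta))$ using the factorizations above, and axiom~(2) guarantees that its value group is $G$.

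The main obstacle is Henselianness, which I would establish via Newton iteration. Given a monic $f(T) \in R[T]$ together with an approximate simple root $\alpha_0 \in R$ satisfying $v(f(\alpha_0)) \succ 2 v(f'(\alpha_0))$, define $\alpha_{n+1} := \alpha_n - f(\alpha_n)/f'(\alpha_n) \in R$ using the ring and field steps above. A standard valuation estimate shows that $v(\alpha_{n+1} - \alpha_n)$ is strictly increasing in $G$, so the coefficients of $\alpha_n$ stabilize pointwise to a formal series $\alpha$. The crucial closure step is exhibiting a single $A \in \mathcal{F}$ with $\Supp(\alpha) \subseteq A$: start from $A_0 \in \mathcal{F}$ containing $\Supp(\alpha_0)$ together with the supports of the finitely many coefficients of $f$ (using (3) iteratively), translate via (5) to make $A_0 \succeq 0$, and let $\Sigma$ be the semigroup generated by $A_0$, which lies in $\mathcal{F}$ by (6). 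An induction on $n$, using the ring arguments with axioms~(3),~(5),~(4), shows that every $\Supp(\alpha_n)$ lies in a fixed translate of $\Sigma$. Since $\Sigma$ is well-ordered by (1) and the coefficients of $\alpha$ stabilize, $\Supp(\alpha)$ lies in this same translate; thus $\alpha \in R$ with $f(\alpha) = 0$, which yields the Henselian property.
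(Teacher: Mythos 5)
The paper itself does not prove this theorem; it simply cites it from Rayner \cite[Theorem 2]{ra}, so there is no internal proof to compare against. Evaluated on its own merits, your treatment of the ring, field, and valuation structures is sound: the use of axioms (3)--(6) to close off supports under union, semigroup generation, translation, and subset is exactly right, and Neumann's lemma gives the needed finiteness for products and geometric series.

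The Henselianness step, however, has a genuine gap. You assert that since $v(\alpha_{n+1}-\alpha_n)$ is strictly increasing in $G$, ``the coefficients of $\alpha_n$ stabilize pointwise.'' This implication fails whenever $G$ is not Archimedean: a strictly increasing sequence in a general ordered abelian group need not eventually exceed any prescribed $g\in G$. Concretely, in $G=\Z^2$ with the lexicographic order, the values $v(\alpha_{n+1}-\alpha_n)$ could be $(0,1),(0,2),(0,3),\dots$, all below $(1,0)$, and then the coefficient of $x^{(1,0)}$ is not controlled and need not stabilize. Thus the $\omega$-indexed Newton iteration does not in general converge pointwise to an element of the Hahn field, let alone of $R$. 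The standard remedies are either (a) run the Newton/Hensel process transfinitely, indexed by the well-ordered set $\Sigma\in\mathcal F$, filling in one coefficient at each successor ordinal and taking pointwise unions at limits (this is where well-orderedness of $\Sigma$ is genuinely used, not merely to package the support at the end); or (b) observe that the Hahn field $\K(\!(G)\!)$ is spherically complete (so the pseudo-Cauchy sequence $(\alpha_n)$ has a pseudo-limit that is a root of $f$), and then run a separate coefficient-by-coefficient induction to show that this root has support contained in $\Sigma$, hence lies in $R$. Your support bookkeeping (all $\Supp(\alpha_n)\subseteq\Sigma$) is correct for the simple-root formulation of Hensel's lemma (where $f'(\alpha_0)$ is a unit and $v(f'(\alpha_0))=0$); note that the formulation you wrote, $v(f(\alpha_0))\succ 2v(f'(\alpha_0))$, allows $f'(\alpha_0)$ to be a non-unit, in which case $\Supp(1/f'(\alpha_0))$ is not contained in $\Sigma$ and the ``fixed translate'' claim needs more care. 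But the convergence issue is the essential missing idea.
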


For $\preceq\in\Ord_n$ we set 
\begin{equation*}\begin{split}\mathcal{F}_{\preceq} :=\left\{  \vphantom{\frac{1}{kp^l}}A\subset \Q^n\mid  \exists k\in \mathbb{N}^{\ast},  \gamma\in\Z^n,  \sigma \text{ a } \preceq\text{-positive rational cone containing }\right.& {\R_{\geq 0}}^n, \\
A\subset (\gamma +\sigma )\cap \frac{1}{k}\GG, \text{ and }   A\text{ is }\preceq\text{-well-ordered } &\left.  \vphantom{\frac{1}{kp^l}} \right\}. \end{split}\end{equation*}

\begin{prop}\label{p_ff} 
The set $\mathcal{F}_{\preceq}$ is a field-family with respect to $(\mathbb{Q}^n,\preceq)$.
\end{prop}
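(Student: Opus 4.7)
I will verify the six axioms of Definition~\ref{ff} for $\mathcal F_\preceq$ viewed as a family of subsets of $(\Q^n,\preceq)$. Axioms (1), (2), and (4) are straightforward: (1) holds because $\preceq$ itself lies in $\Ord_n\cap\mathcal U_{\s_A}$ (since $\s_A$ is $\preceq$-positive by hypothesis), so every $A\in\mathcal F_\preceq$ is $\preceq$-well-ordered; (2) holds because the singleton $\{q\}$ lies in $\mathcal F_\preceq$ for every $q\in\Q^n$ (pick $\s={\R_{\geq 0}}^n$, some $k$ with $q\in\tfrac1k\Z^n$, and any $\g\in\Z^n$ componentwise below $q$), and singletons already generate $\Q^n$ as an abelian group; (4) is immediate since every defining condition is hereditary.

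For (3) and (5) the common idea is to enlarge the ambient cones so as to contain ${\R_{\geq 0}}^n$ and then pick a new integer translation vector componentwise minimal. Given $A,B\in\mathcal F_\preceq$ with data $(k_A,\g_A,\s_A)$ and $(k_B,\g_B,\s_B)$, I take $\s:=\s_A+\s_B+{\R_{\geq 0}}^n$ (still $\preceq$-positive, rational, and now full-dimensional), $k:=k_Ak_B$, and $\g\in\Z^n$ componentwise below both $\g_A$ and $\g_B$. Then $(\g_A+\s_A)\cup(\g_B+\s_B)\subset \g+\s$. Since $\s\supset\s_A,\s_B$, we have $\mathcal U_\s\subset\mathcal U_{\s_A}\cap\mathcal U_{\s_B}$, so every $\preceq'\in\Ord_n\cap\mathcal U_\s$ well-orders both $A$ and $B$, hence also their finite union. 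Axiom (5) is analogous: for $\g\in\Q^n$ write $\g\in\tfrac{1}{qp^m}\Z^n$ with $\gcd(q,p)=1$, update $k$ to $qk_A$, enlarge $\s_A$ by ${\R_{\geq 0}}^n$, and pick a new integer vector componentwise below $\g+\g_A$; translation preserves well-ordering and the denominator constraint holds in $\bigcup_l\tfrac{1}{qk_Ap^l}\Z^n$.

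The main work is axiom (6). Given $A\in\mathcal F_\preceq$ with $A\subset\{\d\succeq\underline 0\}$ and data $(k_A,\g_A,\s_A)$, first enlarge $\s_A$ to $\s_A+{\R_{\geq 0}}^n$ and let $\g_A'\in{\R_{\geq 0}}^n\cap\Z^n$ be the coordinatewise positive part of $\g_A$. Because $\g_A'-\g_A\in{\R_{\geq 0}}^n\subset\s_A$, one has $A\subset\g_A+\s_A\subset\g_A'+\s_A$. The cone $\s:=\R_{\geq 0}\g_A'+\s_A$ is rational and $\preceq$-positive (the sum of a nonnegative ray and a $\preceq$-positive cone), and writing an arbitrary element of the semigroup as $\sum_{i=1}^r(\g_A'+s_i)=r\g_A'+\sum_i s_i$ shows $\langle A\rangle\subset\underline 0+\s$; the denominator condition persists since $\bigcup_l\tfrac{1}{k_Ap^l}\Z^n$ is a subgroup of $\Q^n$. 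For each $\preceq'\in\Ord_n\cap\mathcal U_\s$, $\s_A\subset\s$ is $\preceq'$-positive, so $A$ is $\preceq'$-well-ordered; moreover $\g_A'\succeq'\underline 0$ together with the $\preceq'$-positivity of $\s_A$ forces $A\subset\{\d\succeq'\underline 0\}$, whence Neumann's classical lemma on well-ordered positive subsets of an ordered abelian group (cf.\ \cite{Ri}) yields that $\langle A\rangle$ is $\preceq'$-well-ordered. The main obstacle is precisely this last uniformity across every $\preceq'\in\Ord_n\cap\mathcal U_\s$: one must arrange for $\langle A\rangle$ to sit inside a single rational cone that is simultaneously $\preceq'$-positive for all such orders, which is exactly what is achieved by enlarging $\s_A$ by ${\R_{\geq 0}}^n$ and replacing $\g_A$ by its coordinatewise positive part.
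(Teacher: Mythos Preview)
Your verification of axioms (1)--(5) is fine. The error is in axiom~(6), and it occurs at the very first step: the claimed inclusion $\g_A+\s_A\subset \g_A'+\s_A$ is false. For a cone $C$ one has $v+C\subset w+C$ if and only if $v-w\in C$; here $\g_A-\g_A'\in\R_{\leq 0}^n$, and since the (enlarged) $\s_A$ is strongly convex and contains $\R_{\geq 0}^n$, we get $\R_{\leq 0}^n\cap\s_A=\{\underline 0\}$. So your inclusion holds only when $\g_A\in\R_{\geq 0}^n$ already. A concrete counterexample: take $n=2$, $\preceq=\leq_\o$ with $\o=(1,\sqrt2)$, $\s_A=\R_{\geq 0}^2$, $\g_A=(-1,0)$, and $A=\{(-1,2)\}$. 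Then $A\in\mathcal F_\preceq$, $A\subset\{\d\succeq\underline 0\}$, $\g_A'=(0,0)$, yet $A\not\subset\g_A'+\s_A=\R_{\geq 0}^2$.

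Once this inclusion fails, both consequences you draw from it collapse: you can no longer conclude $\langle A\rangle\subset\R_{\geq 0}\g_A'+\s_A$, and you lose the $\preceq'$-positivity of $A$ for all $\preceq'\in\Ord_n\cap\mathcal U_\s$, which is what feeds Neumann's lemma. There is no cheap repair: what you really need is a single rational $\preceq$-positive cone $\wdt\s\supset\s_A$ containing $A$ itself (not a translate of $A$), and when $\g_A\prec\underline 0$ the set $A$ may spread along directions that are $\preceq$-positive but not captured by adding any fixed vector of $\R_{\geq 0}^n$ to $\s_A$. This is precisely why the paper's proof splits on the sign of $\g$ and, in the hard case $\g\prec\underline 0$, uses the first vector $u_1$ of the order to slice $A$ by the hyperplane $\{u\cdot u_1=a\cdot u_1\}$ (with $a=\min(A\setminus\{\underline 0\})$), handling the part with $a\cdot u_1>0$ by a cone-over-a-slice construction and reducing the part lying in $u_1^\perp$ to dimension $n-1$ by induction.
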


\begin{proof}
It is straightforward to verify that $\mathcal F_\preceq$ satisfies the  items (1), (2), (4) and (5) of Definition \ref{ff}. For (3), if $A$, $B\in \mathcal F_\preceq$, we have
$$A\subset (\gamma_A +\sigma_A )\cap \frac{1}{k_A}\GG,\ \  B\subset (\gamma_B +\sigma_B )\cap \frac{1}{k_B}\GG$$
 for some $\g_A$, $\g_B\in\Z^n$, $\s_A$ and $\s_B$ $\preceq$-positive rational cones containing ${\R_{\geq 0}}^n$ and $k_A$, $k_B\in \N^\ast$. We can replace $k_A$ and $k_B$ by their least common multiple and assume that $k_A=k_B$. We can also replace $\s_A$ and $\s_B$ by the cone $\s$ gerenated by $\s_A$ and $\s_B$. Since $\s_A$ and $\s_B$ are $\preceq$-positive and rational, $\s$ is also $\preceq$-positive and rational. Finally we may assume that $\g_A=\g_B$ by Lemma \ref{inter_cones}. Moreover $A$ and $B$ are $\preceq$-well-ordered, thus $A\cup B$ is $\preceq$-well-ordered. This shows that (3) is satisfied.\\
Therefore we only prove (6) here. The proof is done by induction on $n$. In fact  we will prove by induction on $n$, the following claim:\\
\textbf{Claim:} \emph{For $A\subset (\g+\s)$, where  $\g\in\Z^n$, $\s$ is a a strongly convex rational cone,  and $A$ is $\preceq$-positive and $\preceq$-well-ordered, there exists a $\preceq$-positive rational cone $\s'\supset\s$ such that 
$A\subset \s'$.}\\
This claim, along with the following theorem, proves the proposition:
   \begin{thm}\label{neumann}  \cite[Theorem 3.4, p. 206]{N}
  Let $A$ be a well-ordered subset of an ordered group $(G,\preceq)$. If  $A$ is $\preceq$-positive,  the semigroup generated by $A$ is well-ordered.
  \end{thm}
 
  Let us consider a set $A$ as in the claim.\\
  If $n=1$, there is only two orders on $\Q$. Both cases are symmetric, thus we may assume that $\preceq$ is the usual order $\leq$ on $\Q$ and $\s=\R_{\geq 0}$. Therefore we may assume that $\g=0$ as $A\subset \Q_{\geq 0}$. In this case $\mathcal U_\s=\{\leq\}$. Since $A$ is $\leq$-positive and $\leq$-well-ordered, $\lg A\rg\subset\Q_{\geq 0}$ is also $\leq$-well-ordered by Theorem \ref{neumann}.
   This settles the case $n=1$.\\
 So from now on, assume that $n>1$ and that the result is satisfied for $n-1$.\\
We know that there exist nonzero vectors $(u_1,\dots,u_s)\in (\mathbb{R}^n)^s$ and $(q_1,\dots,q_r) \in (\mathbb{Q}^n)^r$ such that $\preceq=\leq_{(u_1,\dots,u_s)}$ and $\sigma=\langle q_1,\dots,q_r\rangle$.\\
Assume first that $\gamma \succeq \underline0$. Then $A\subset \sigma '=\langle \gamma,q_1,\dots,q_r\rangle$ and $\sigma '$ is a $\preceq$-positive rational cone. Hence  $A$  is included in $\sigma ' \cap \frac{1}{k}\GG$, and the claim is proved. \\
\\
Now assume that $\gamma \prec \underline 0$. By replacing $\s$ by the cone generated by $\s$ and $-\g$, we may assume that $\underline0\in \g+\s$. We define $a:=\min (A\setminus\{\underline0\})$ and we set 
$$H:=\{u\in \mathbb{R}^n \text{ such that } u\cdot u_1=a\cdot u_1 \},$$
$$H^{+}:=\{u\in \mathbb{R}^n \text{ such that } u\cdot u_1 \geq a\cdot u_1\}.$$
By assumption,  $a \succ \underline0$.  Hence $a\cdot u_1 \geq 0$ because $\preceq=\leq_{(u_1,\dots,u_s)}$. \\
\textbf{Case 1:}  If $a\cdot u_1 >0$  we define $\s'$ to be the closure of the cone spanned by $H\cap(\g+\s)$. A vector $u\in\s'$ is not in the cone spanned by $H\cap(\g+\s)$ if and only if $\R_{\geq 0}u$ is the limit of halflines of the form $\R_{\geq 0}v_n$ with $v_n\in H\cap(\g+\s)$ and $(v_n)_n$ is not bounded. Therefore, we may assume that $v_n=\g+\la_nv'$ where $\la_n>0$ and $v'\in\s$ is orthogonal to $u_1$. Therefore $\s'$   is generated by the vertices of $H\cap(\g+\s)$ and the generators $\s\cap\langle u_1\rangle^\perp$, in particular $\s'$ is a rational cone.\\
Moreover $\s'$ is a $\preceq$-positive cone, because $u\succeq 0$ for every $u\in H\cap(\g+\s)$, and $(\gamma + \sigma) \cap H^{+} \subset \sigma '$. Finally it is clear that $\s'$ is strongly convex: if $u$, $-u\in \s'$, since $\s'$ is $\preceq$-positive, $u\in \s'\cap\langle u_1\rangle^\perp=\s\cap\langle u_1\rangle^\perp$; but $\s$ is strongly convex, thus $u=0$.\\
Now, if $a'\in A$, we have $a'\cdot u_1\geq a\cdot u_1$, thus there is $1\geq \la>0$ such that $\la a'\in H$. But we have
$$\la a'=\g+\la(a'-\g)-\g\in \g+\s$$
because $-\g\in\s$ and $a'-\g\in\s$. Therefore, $\la a'\in \s'$ and $A\subset \s'$. Thus the claim is proved in this case.\\
 \textbf{Case 2:} Assume that $a\cdot u_1=0$. We denote  the set $A\cap H\cap\Q^n$ by $B$, and we set $a_1 := \min (A \setminus B)$. Since $A \subset \{\delta \in \Q^n \mid \delta \succeq \underline0\}$ and $a_1 \notin H$, we have $a_1\cdot u_1 >0$. By Case 1, there exists a strongly convex rational $\preceq$-positive cone $\sigma_1$ containing $\s$ such that $A \setminus B \subset  \sigma_1$. \\
We have that $H\cap\Q^n$ is a $\Q$-vector space of dimension $d<n$. We set $V:=(H\cap\Q^n)\otimes_\Q\R$. We set $\s_2:=\s\cap V$ and we denote by $\preceq_V$ the restriction of $\preceq$ to $V$. Then $\s_2$ is a strongly convex rational  $\preceq_V$-positive cone.  If $\s_2$ is not full dimensional, we replace $\s_2$ by a strongly convex rational  $\preceq_V$-positive cone that is full dimensional. Therefore, by Lemma \ref{inter_cones}, we have that $B\subset \g_2+\s_2$ for some $\g_2\in V$. Therefore, by the inductive hypothesis, there is a strongly convex rational cone $\preceq_V$-positive $\s_3$ such that $\s_2\subset \s_3$ and $ B\subset \s_3$.\\
Now we set $\s':=\s_1+\s_3$. This cone is rational and $\preceq$-positive, thus it is strongly convex. Moreover it contains $A$, therefore the claim is proved.
\end{proof}

\begin{proof}[Proof of Theorem \ref{thm_alg_clos_p}]
By Proposition \ref{p_ff} and Theorem \ref{rayner}, the  set $\mathcal S^\K_\preceq$ is a Henselian valued field.\\
Assume that   $\mathcal S^\K_\preceq$ is not algebraically closed. Then, by \cite[Lemma 4]{ra} there exists $a\in \mathcal S^\K_\preceq$ such that $T^p-T-a$ is irreducible in $\mathcal S^\K_\preceq[T]$. Let us write
$$a=a^++a^-$$ 
where $\Supp(a^-)\subset \{b\in \Q^n\mid b\prec \underline 0\}$ and $\Supp(a^+)\subset \{b\in\Q^n\mid b\succeq \underline 0\}$. Because the map $b\lgm b^p$ is an additive map, if $\xi^+$ is a root of $T^p-T-a^+$ and $\xi^-$ is root of $T^p-T-a^-$, then $\xi^++\xi^-$ is a root of $T^p-T-a$. We will prove that $T^p-T-a^+$ and $T^p-T-a^-$ admit a root in $\mathcal S^\K_\preceq$ contradicting the fact that $T^p-T-a$ is irreducible.\\
\\
Since $\mathcal S_\preceq^\K$ is a Henselian valued field, 
$$\mathfrak O:=\left\{\xi\in\mathcal S_\preceq^\K\mid\forall b\in\Supp(\xi), b\succeq \underline 0\right\}$$
is a Henselian local ring with maximal ideal 
$$\mathfrak m:=\left\{\xi\in\mathcal S_\preceq^\K\mid\forall b\in\Supp(\xi), b\succ \underline 0\right\}.$$
The polynomial $T^p-T-a^+\in\mathfrak O[T]$ has a root modulo $\mathfrak m$ since $\K$ is algebraically closed (here $\mathfrak O/\mathfrak m=\K$). Moreover the derivative of this polynomial is -1. Thus this polynomial satisfies Hensel's Lemma and admits a root $\xi^+$ in $\mathcal S^\K_\preceq$.\\
\\
In order to prove that $T^p-T-a^-$ has a root in $\mathcal S^\K_\preceq$, we follow the proofs of \cite[Theorem 3]{ra}, and \cite[Theorem 5.3]{S}. We write $a^-=\sum_{q\in\Q^n} a^-_q x^q$ and we define
$$\xi^-:=\sum_{q\in\Q^n}\left(\sum_{i=1}^\infty \left(a^-_{p^iq}\right)^{\frac{1}{p^i}}\right)x^q.$$
We can verify that $\xi^-$ is well defined: for a given $q\in\Supp(a^-)$,  the sequence $(p^iq)_i$ is strongly decreasing for the order $\preceq$ since $q\prec \underline 0$. Therefore $a^-_{p^iq}=0$ for $i$ large enough because $\Supp(a^-)$ is $\preceq$-well-ordered. Hence the sum  $\sum_{i=1}^\infty \left(a^-_{p^iq}\right)^{\frac{1}{p^i}}$ is in fact a finite sum.\\
Then we remark that
$$\Supp(\xi^-)\subset \bigcup_{i\in\N^\ast}\frac{1}{p^i}\Supp(a^-),$$
thus $\Supp(\xi^-)$ is $\preceq$-well-ordered by \cite[Lemma 5.2]{S}. \\
Finally we claim that $\Supp(\xi^-)$ is contained in the translation of a rational $\preceq$-positive cone. In order to prove this we assume that $\Supp(a^- )\subset \g+\s$ where $\g\in\Z^n$ and $\s$ is a rational $\preceq$-positive cone, and we denote by $\g_1$, \ldots, $\g_s\in\Z^n$ some generators of $\s$.\\
First we assume that $\g\succeq \underline 0$. Let $\a\in\Supp(\xi^-)$, $\a=\frac{1}{p^i}\a'$ with $\a'\in\Supp(a^-)$. We have
$$\frac{1}{p^i}\a'+\g=\frac{1}{p^i}(\a'-\g)+\left(\frac{1}{p^i}+1\right)\g\in \s_1$$
where $\s_1$ is the cone generated by the $\g_i$ and $\g$. Thus $\a\in-\g+\s_1$, which proves the claim because $\s_1$ is rational and $\preceq$-positive.\\
Now assume that $\g\prec \underline 0$ and consider $\a\in\Supp(\xi^-)$ written $\a=\frac{1}{p^i}\a'$ as before. Then $\frac{1}{p^i}(\a'-\g)\in\s$ because $\a'\in\g+\s$. Thus
$$\frac{1}{p^i}\a'-\g=\frac{1}{p^i}(\a'-\g)+\left(1-\frac{1}{p^i}\right)(-\g)\in\s_2$$
where $\s_2$ is the cone generated by the $\g_i$ and $-\g$. Thus $\a\in\g+\s_2$, which proves the claim because $\s_2$ is rational and $\preceq$-positive.\\
\\
 Moreover an easy computation shows that $\xi^-$ is a root of $T^p-T-a^-$. This proves the theorem.
\end{proof}

\subsection{Examples}
We do not know if Theorem \ref{main_thm} remains valid for elements of $\mathcal S_\preceq^\K$ when $\K$ is positive characteristic field, but all the other results proved before in characteristic zero are non longer true in positive characteristic, as shown by the following examples:

\begin{ex}\label{ex4}
Let $\K$ be a field of characteristic $p>0$. Set $f=\displaystyle\sum_{k=1}^\infty t^{1-\frac{1}{p^k}}$. The series $f$ is algebraic over $\K(t)$ because $f^p-t^{p-1}f-t^{p-1}=0$. Thus $g:=\displaystyle\sum_{k=1}^\infty\left(\frac{x}{y}\right)^{1-\frac{1}{p^k}}$ is algebraic over $\K(x,y)$. We set $\displaystyle\xi=\sum_{k=1}^\infty (xg)^k$. Because $\xi=\frac{xg}{1-xg}$, $\xi$ is rational over the field extension of $\K(x,y)$ by $g$. Hence $\xi$ is algebraic over $\K(x,y)$. \\
We see that all the monomials of $(xg)^k$ are of the form $x^{k-l}y^l$ for $l\in\Q_{\geq 0}$. Therefore the support of $\xi$ is included in the cone $\s$ generated by $(2,-1)$ and $(0,1)$ (see Figure \ref{fig_ex4}). Moreover the support of $(xg)^k$ contains a sequence of points converging to $(2k,-k)$. But $(2k,-k)$ does not belong to the support of $\xi$ since $(1,-1)$ does not belong to the support of $g$. Hence $\t(\xi)=\s^\vee$ is generated by $(1,0)$ and $(1,2)$.\\
But the conclusions of Theorem \ref{main_thm3} and \ref{main_thm2}  do not hold in this case: there is  no hyperplane $H_\la=\{(x,y)\in\R^2\mid x+2y=\la\}$ containing infinitely many elements of $\Supp(\xi)$ such that $H_\la^-:=\{(x,y)\in\R^2\mid x+2y<\la\}$ contains only finitely many elements of $\Supp(\xi)$.

\tdplotsetmaincoords{60}{120} 
\begin{center}\begin{figure}[H]\fbox{\begin{tikzpicture} [scale=0.8, axis/.style={->,thin}, 
vector/.style={-stealth,red,very thin}, 
vector guide/.style={dashed,red,thick}]

\coordinate (O) at (0,0);

\pgfmathsetmacro{\ax}{0.8}
\pgfmathsetmacro{\ay}{0.8}

\coordinate (P) at (\ax,\ay);

\draw[axis] (0,0) -- (5.2,0) node[anchor= west]{$x$};
\draw[axis] (0,0,0) -- (0,2.2) node[anchor=north west]{$y$};

 \draw (1,0)circle (0.pt)  node{} --  (5/3,-2/3) circle (0.pt) node {} -- (17/9,-8/9)  circle (0.pt) node {} -- (26/27+1,-26/27)  circle (0.pt) node {}-- (80/81+1,-80/81)  circle (0.pt) node {} -- (2,-1) circle (1.5pt) node{} ;
 
 \draw (2,0) circle (0.pt)  node{} --  (1+1+5/3,-1-2/3) circle (0.pt) node {} -- (2+17/9,-1-8/9)  circle (0.pt) node {} -- (2+26/27+1,-1-26/27)  circle (0.pt) node {}-- (2+80/81+1,-1-80/81)  circle (0.pt) node {} -- (4,-2) circle (1.5pt) node{}  ;
 
 \draw (3,0) circle (0.pt)  node{} --  (4+5/3,-2-2/3) circle (0.pt) node {} -- (4+17/9,-2-8/9)  circle (0.pt) node {} -- (5+26/27,-2-26/27)  circle (0.pt) node {}-- (4+80/81+1,-2-80/81)  circle (0.pt) node {} -- (6,-3) circle (1.5pt) node{}  ;
 
 \fill [color=gray, opacity=0.1]
(0,0) -- (0,2) -- (6,2) -- (6,-3) -- cycle;

\end{tikzpicture}}\caption{Example \ref{ex4}}\label{fig_ex4}\end{figure}\end{center}
Here $\t'_0(\xi)=\emptyset$. This shows that Lemma \ref{compar} is not valid in general for generalized series with exponents in $\Q^n$ that are algebraic over $\K(\!(x)\!)$, for a positive characteristic field $\K$.
\end{ex}


\begin{ex}\label{ex_pos}
We set $a=\sum_{i=1}^\infty x^iy^{-1}\in \mathbb F_2(\!(x,y)\!)$ and $P(T)=T^2+T+a$. For $i\in\N^\ast$ we also denote $P_i(T)=T^2+T+x^iy^{-1}$. We consider an order $\preceq\in\Ord_2$.\\
The roots of $P_i$ in $\SS_\preceq^{\mathbb F_2}$ are
$$\left\{\begin{array}{c}\xi_i^{(1)} \text{ and } \xi_i^{(1)}+1, \text{ with }\xi_i^{(1)}=\sum_{k=1}^\infty x^{i2^{-k}}y^{-2^{-k}} \text{ when } (-i,1)\succ \underline 0 \\
\xi_i^{(2)} \text{ and } \xi_i^{(2)}+1, \text{ with }\xi_i^{(2)}=-\sum_{k=1}^\infty x^{i2^{k}}y^{-2^{k}} \text{ when } (i,-1)\succ \underline 0
\end{array}\right.$$
Let $i_0=\sup\{i\in\N^\ast \mid (-i,1)\succ \underline 0\}\in\N^\ast\cup\{\infty\}$.
Therefore the roots of $P$ in $\SS_\preceq^{\mathbb F_2}$ are $\xi_\preceq$ and $\xi_\preceq+1$ where 
$$\xi_\preceq=\sum_{i=1}^{i_0}\xi_i^{(1)}+\sum_{i_0+1}^\infty \xi_i^{(2)}.$$

\tdplotsetmaincoords{60}{120} 
\begin{center}\begin{figure}[H]\fbox{\begin{tikzpicture} [scale=1.2, axis/.style={->,thin}, 
vector/.style={-stealth,red,very thin}, 
vector guide/.style={dashed,red,thick}]

\coordinate (O) at (0,0);


\pgfmathsetmacro{\ax}{0.8}
\pgfmathsetmacro{\ay}{0.8}

\coordinate (P) at (\ax,\ay);

\draw (1,-1) circle (1.2pt);
\draw (2,-1) circle (1.2pt);
\draw (3,-1) circle (1.2pt);
\draw (4,-1) circle (1.2pt);
\draw (5,-1) circle (1.2pt);
\draw (6,-1) circle (1.2pt);

\draw[axis] (-0.5,0) -- (11.5,0) node[anchor= west]{$x$};
\draw[axis] (0,-3) -- (0,1) node[anchor=north west]{$y$};

\draw[fill] (1/2,-1/2) circle (0.8pt); 
\draw[fill]  (1/4,-1/4) circle (0.8pt);
\draw[fill]  (1/8,-1/8)  circle (0.8pt);
\draw[fill] (1/16,-1/16) circle (0.8pt);

\draw[fill] (2/2,-1/2) circle (0.8pt); 
\draw[fill]  (2/4,-1/4) circle (0.8pt);
\draw[fill]  (2/8,-1/8)  circle (0.8pt);
\draw[fill] (2/16,-1/16) circle (0.8pt);

\draw[fill] (3/2,-1/2) circle (0.8pt); 
\draw[fill]  (3/4,-1/4) circle (0.8pt);
\draw[fill]  (3/8,-1/8)  circle (0.8pt);
\draw[fill] (3/16,-1/16) circle (0.8pt);

\draw[fill] (8,-2) circle (0.8pt); 
\draw[fill]  (10,-2) circle (0.8pt);
\draw[fill]  (12,-2)  circle (0.8pt);

   \draw[gray, dashed, ultra thin] (0,0)   --   (1,-1) ;
  \draw[gray, dashed, ultra thin] (0,0)   --   (2,-1) ;
 \draw[gray, dashed, ultra thin] (0,0)   --   (3,-1) ;
 \draw[gray, dashed, ultra thin] (4,-1)   --   (10,-2.5) ;
  \draw[gray, dashed, ultra thin] (5,-1)   --   (12,-2.4) ;
   \draw[gray, dashed, ultra thin] (6,-1)   --   (12,-2) ;

\end{tikzpicture}}\caption{Example \ref{ex_pos}}\label{fig_last_ex}
\end{figure}\end{center}

We can replace $P(T)$ by $\wdt P(T):=T^2+yT+\sum_{i=1}^\infty x^i\in\K[[ x]][T]$ and remark that $\wdt P(yT)=y^2P(T)$, thus the roots of $P_1(T)$ are obtained from those of $P(T)$ by multiplication by $y$.
This proves that Proposition \ref{prop_key} is no longer valid in positive characteristic.
 \end{ex}


\begin{ex}\label{ex_pos_3}
Let $\K=\mathbb F_2$ be the field with two elements. The series 
$$a(x,y)=x\sum_{k=1}^\infty\left(\frac{x}{y}\right)^{1-2^{-k}}$$ is algebraic over $\mathbb F_2(x,y)$. Thus the roots of $T^2+T+a$ are algebraic over $\mathbb F_2(x,y)$. One of these roots is
$$\xi=-\sum_{k=1}^\infty\sum_{\ell=0}^\infty\left(\frac{x^{2-2^{-k}}}{y^{1-2^{-k}}}\right)^{2^\ell}\in\mathcal S_\preceq^{\mathbb F_2}$$
where $\preceq\in \Ord_2$ is such that $(1,-1)\succ \underline 0$. The
 support of $\xi$ is given on Figure \ref{fig_last_ex2} below. Thus, $\t(\xi)^\vee$ is the cone generated by $(2,-1)$ and $(0,1)$. Here $\t_0(\xi)$ is not  open  since $(1,1)\in\t_0(\xi)$: here $\t_0(\xi)$ is equal to the cone generated by $(1,0)$ and $(1,1)$ minus the origin. Thus Proposition \ref{prop_open} is no longer valid in positive characteristic. We remark that $\t'_0(\xi)=\emptyset$.\\
 On the following picture, the small circles indicate the terms of the support of $a(x,y)$, while the bullets indicate the terms of the support of $\xi$:

\tdplotsetmaincoords{60}{120} 
\begin{center}\begin{figure}[H]\fbox{\begin{tikzpicture} [scale=1.2, axis/.style={->,thin}, 
vector/.style={-stealth,red,very thin}, 
vector guide/.style={dashed,red,thick}]

\coordinate (O) at (0,0);


\pgfmathsetmacro{\ax}{0.8}
\pgfmathsetmacro{\ay}{0.8}

\coordinate (P) at (\ax,\ay);

\draw (1.5,-0.5) circle (1.2pt);
\draw  (1.75,-0.75) circle (1.2pt);
\draw (1+7/8,-7/8) circle (1.2pt);
\draw (1+15/16,-15/16) circle (1.2pt);

\draw[axis] (-0.5,0) -- (7,0) node[anchor= west]{$x$};
\draw[axis] (0,-3) -- (0,1) node[anchor=north west]{$y$};
\draw[fill] (3,-1) circle (0.8pt); 
\draw[fill]  (6,-2) circle (0.8pt);

%

\draw[fill]  (3.5,-1.5)  circle (0.8pt);
\draw[fill] (7,-3) circle (0.8pt);
\draw[fill] (2+7/4,-7/4) circle (0.8pt); 
\draw[fill]  (4+7/2,-7/2) circle (0.8pt);
\draw[fill]  (2+15/8,-15/8)  circle (0.8pt);
\draw[fill] (4+15/4,-15/4) circle (0.8pt);
%
%
%
%
%
   \draw[gray, dashed, ultra thin] (0,0)   --   (4*1.5,-2) ;
  \draw[gray, dashed, ultra thin] (0,0)   --   (4+3,-3) ;
 \draw[gray, dashed, ultra thin] (0,0)   --   (4+7/2,-7/2) ;
 \draw[gray, dashed, ultra thin] (0,-0)   --   (4+15/4,-15/4) ;
  \draw[gray, dashed, ultra thin] (0,0)   --   (4+31/8,-31/8) ;
%
\end{tikzpicture}}\caption{Example \ref{ex_pos_3}}\label{fig_last_ex2}
\end{figure}\end{center}

\end{ex}




\end{document}